\numberwithin{equation}{section}
\numberwithin{figure}{section}
\def\R{{\mathbb R}}
\def\C{{\mathbb C}}
\def\T{{\mathbb T}}
\newtheorem{definition}{Definition}[section]
\newtheorem{theorem}{Theorem}[section]
\newtheorem{lemma}{Lemma}[section]
\newtheorem{remark}{Remark}[section]
\newtheorem{corollary}{Corollary}[section]
\newtheorem{proposition}{Proposition}[section]
\newtheorem{example}{Example}[section]
\begin{document}
\title[The $\alpha$--Szeg\H{o} equation]{The cubic Szeg\H{o} equation with a linear perturbation}
\author{Haiyan XU}
\address{Universit\'e Paris-Sud XI, Laboratoire de Math\'ematiques
d'Orsay, CNRS, UMR 8628} \email{{\tt Haiyan.xu@math.u-psud.fr}}
\thanks{This work was supported by grants from R\'egion Ile-de-France (RDMath - IdF). }
\keywords{cubic Szeg\H{o} equation, integrable system, Hamiltonian system, energy cascade}
\subjclass[2010]{ 35Q55, 35B40}

\begin{abstract}
We consider the following Hamiltonian equation on the $L^2$ Hardy space on the circle $\mathbb{S}^1$,
\begin{equation}
i\partial _t u=\Pi(\vert u\vert ^2u)+\alpha(u|1)\ ,\alpha\in\mathbb{R}\ ,
\end{equation}
where $\Pi$ is the Szeg\H{o} projector. The above equation with $\alpha=0$ was introduced by G\'erard and Grellier as an important mathematical model \cite{GGASENS,GGINV,GG2010Seminar}. In this paper, we continue our studies started in \cite{XU2014APDE}, and prove our system is completely integrable in the Liouville sense. We study the motion of the singular values of the related Hankel operators and find a necessary condition of norm explosion. As a consequence, we prove that the trajectories of the solutions will stay in a compact subset, while more initial data will lead to norm explosion in the case $\alpha>0$.
\end{abstract}

\maketitle

\section{Introduction}
The purpose of this paper is to study the following Hamiltonian system,
\begin{equation}\label{alszc2}
i\partial_t u=\Pi(|u|^2u)+\alpha(u|1)\ ,\ x\in\mathbb{S}^1\ ,\ t\in\R\ ,\ \alpha\in\mathbb{R}\ . 
\end{equation}
where the operator $\Pi$ is defined as a projector onto the non-negative frequencies, which is called the Szeg\H{o} projector. When $\alpha=0$, the equation above turns out to be the cubic Szeg\H{o} equation, 
\begin{equation}\label{szegoc2}
i\partial_t u=\Pi(|u|^2u)\ ,
\end{equation}
which was introduced by P. G\'erard and S. Grellier as an important mathematical model of the completely integrable systems and non-dispersive dynamics \cite{GGASENS, GGINV}. For $\alpha\ne0$, by changing variables as $u=\sqrt{|\alpha|}\tilde{u}(|\alpha| t)$, then $\tilde{u}$ satisfies
\begin{equation}
i\partial_t \tilde{u}=\Pi(|\tilde{u}|^2\tilde{u})+\mathrm{sgn}(\alpha)(\tilde{u}|1)\ .
\end{equation}
Thus our target equation with $\alpha\neq0 $ becomes
\begin{equation}\label{alszegoc2}
i\partial_t u=\Pi(|u|^2u)\pm(u|1)\ . 
\end{equation}

\subsection{Lax Pair structure}
Thanks to the Lax pairs for the cubic Szeg\H{o} equation \eqref{szegoc2} \cite{GGINV}, we are able to find a Lax pair for \eqref{alszc2}. To introduce the Lax pair structure, let us first define some useful operators and notation. For $X\subset\mathcal{D}'(\mathbb{S}^1)$, we denote
\begin{equation}
 X_+(\mathbb{S}^1):=\Big\{u({\mathrm{e}}^{i\theta})\in X,\ u({\mathrm{e}}^{i\theta})=\sum\limits_{k\ge0}\hat{u}(k){\mathrm{e}}^{ik\theta}\ \Big\}\ .
\end{equation}
For example, $L^2_+$ denotes the Hardy space of $L^2$ functions which extend to the unit disc $\mathbb{D}=\{z\in\C,\ |z|<1\}$ as holomorphic functions
\begin{equation}
 u(z)=\sum\limits_{k\ge0}\hat{u}(k)z^k,\ \sum\limits_{k\ge0}|\hat{u}(k)|^2<\infty\ .
\end{equation}
Then  the Szeg\H{o} operator $\Pi$ is an orthogonal projector $L^2(\mathbb{S}^1)\to L^2_+(\mathbb{S}^1)$.

Now, we are to define a Hankel operator and a Toeplitz operator. By a Hankel operator we mean a bounded operator $\Gamma$ on the sequence space $\ell^2$ which has a Hankel matrix in the standard basis $\{e_j\}_{j\geq0}$,
\begin{equation}
(\Gamma e_j,\ e_k\ )=\gamma_{j+k},\ j,\ k\ge0\ ,
\end{equation}
where $\{\gamma_j\}_{j\ge0}$ is a sequence of complex numbers. More backgrounds on the Hankel operators can be found in \cite{Peller2003}.

Let $S$ be the shift operator on $\ell^2$,
\begin{equation*}
S e_j=e_{j+1},\ j\ge0\ .
\end{equation*}
It is easy to show that a bounded operator $\Gamma$ on $\ell^2$ is a Hankel operator if and only if
\begin{equation}
 S^*\Gamma=\Gamma S\ .
\end{equation}

\begin{definition}
For any given $u\in H^{\frac12}_+(\mathbb{S}^1)$, $b\in L^\infty(\mathbb{S}^1)$, we define two operators $H_u, \ T_b:\ L^2_+\to L^2_+$ as follows. For any $h\in L^2_+$,
\begin{align}
H_u(h)=\Pi(u\bar{h})\ ,\\
T_b(h)=\Pi(bh)\ .
\end{align}
\end{definition}
Notice that $H_u$ is $\C-$antilinear and symmetric with respect to the real scalar product $\mathrm{Re}(u|v)$. In fact, it satisfies
\begin{equation*}
(H_u(h_1)|h_2)=(H_u(h_2)|h_1)\ .
\end{equation*}
 $T_b$ is $\C-$linear and is self-adjoint if and only if $b$ is real-valued. 
 
Moreover, $H_u$ is a Hankel operator. Indeed, it is given in terms of Fourier coefficients by
\begin{equation*}
\widehat{H_u(h)}(k)=\sum_{\ell\ge0}\hat{u}(k+\ell)\overline{\hat{h}(\ell)}\ ,
\end{equation*}
then 
\begin{eqnarray*}
S^*H_u(h)&=&\sum_{k,\ell\ge0}\hat{u}(k+\ell)\overline{\hat{h}(\ell)}S^*e_k=\sum_{k,\ell\ge0}\hat{u}(k+\ell+1)\overline{\hat{h}(\ell)}e_k\ ,\\
H_uSh&=& \sum_{k\ge\ell,\ell\ge0}\hat{u}(k)e_{k}\overline{\hat{h}(\ell)e_{\ell+1}}= \sum_{k,\ell\ge0}\hat{u}(k+\ell+1)\overline{\hat{h}(\ell)}e_k\ ,
\end{eqnarray*}
which means $S^*H_u=H_uS$, thus $H_u$ is a Hankel operator. We may also represent $T_b$ in terms of Fourier coefficients,
\begin{equation*}
\widehat{T_b(h)}(k)=\sum_{\ell\ge0}\hat{b}(k-\ell)\hat{h}(\ell)\ ,
\end{equation*}
then its matrix representation, in the basis ${e_k, k \ge 0}$, has constant diagonals, $T_b$ is a Toeplitz operator.

We now define another operator $K_u:=T_z^*H_u$. In fact $T_z$ is exactly the shift operator $S$ as above, we then call $K_u$ the shifted Hankel operator, which satisfying the following identity
\begin{equation}\label{HKc2}
K_u^2=H_u^2-(\cdot\ |\ u)u\ .
\end{equation}

Using the operators above, G\'erard and Grellier found two Lax pairs for the Szeg\H{o} equation \eqref{szegoc2}.
\begin{theorem}\cite[Theorem 3.1]{GGASENS}
Let $u \in C(R, H^s_+(\mathbb{S}^1))$ for some $s > 1/2$. The cubic Szeg\H{o} equation \eqref{szegoc2} has two Lax pairs $(H_u, B_u)$ and $(K_u,C_u)$, namely, if $u$ solves \eqref{szegoc2}, then
\begin{equation}
\frac{d H_u}{dt}=[B_u,H_u]\ , \frac{d K_u}{dt}=[C_u,K_u]\ ,
\end{equation}
where
$$B_u:=\frac{i}{2}H_u^2-iT_{|u|^2}\ , \ C_u=\frac{i}{2}K_u^2-iT_{|u|^2}\ .$$
\end{theorem}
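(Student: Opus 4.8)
The plan is to verify each Lax equation by turning it into an algebraic identity among the operators $H_u$, $K_u$ and $T_{|u|^2}$, using two structural tools. First, the map $u\mapsto H_u$ is $\mathbb{C}$-linear, so that $\frac{d}{dt}H_u=H_{\dot u}$; since the equation \eqref{szegoc2} reads $\dot u=-i\Pi(|u|^2u)$, this gives $\frac{d}{dt}H_u=-iH_{\Pi(|u|^2u)}$, and likewise $\frac{d}{dt}K_u=S^*H_{\dot u}=K_{\dot u}$. Second, an antilinear bounded operator $X$ on $L^2_+$ satisfying the Hankel intertwining $S^*X=XS$ is determined by its symbol $X(1)$: iterating gives $XS^k=(S^*)^kX$, hence $X(z^k)=(S^*)^kX(1)=H_{X(1)}(z^k)$, so $X=H_{X(1)}$. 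Thus it suffices, for each claimed Lax operator, to check that the relevant commutator is antilinear, is Hankel, and has the correct symbol.

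For the pair $(H_u,B_u)$ I would first expand $[B_u,H_u]$, carefully tracking the antilinearity of $H_u$: pushing the scalar $i$ through $H_u$ conjugates it, so $[\tfrac{i}{2}H_u^2,H_u]=iH_u^3$ (this does not vanish) and $[-iT_{|u|^2},H_u]=-i(T_{|u|^2}H_u+H_uT_{|u|^2})$. Hence the identity $\frac{d}{dt}H_u=[B_u,H_u]$ is equivalent to
\begin{equation*}
H_{\Pi(|u|^2u)}=T_{|u|^2}H_u+H_uT_{|u|^2}-H_u^3 .
\end{equation*}
Rather than verify this on all of $L^2_+$, I would use the symbol characterization. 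Since $B_u$ is linear and $H_u$ antilinear, $[B_u,H_u]$ is antilinear; using $H_u(1)=u$ and $H_u^2(1)=\Pi(|u|^2)=T_{|u|^2}(1)$ one gets $B_u(1)=-\tfrac{i}{2}\Pi(|u|^2)$, whence the clean symbol match $[B_u,H_u](1)=B_u(u)-H_u(B_u1)=-i\Pi(|u|^2u)=\dot u$. It then remains to prove that $[B_u,H_u]$ is Hankel, i.e. $S^*[B_u,H_u]=[B_u,H_u]S$; using $S^*H_u=H_uS$ this reduces to $[S^*,B_u]H_u=H_u[S,B_u]$, which I would settle with the relation $S^*H_u=H_uS$ and the Toeplitz commutator identity $[S^*,T_b]h=(h|1)\,S^*\Pi(b)$, the various rank-one contributions cancelling in this combination.

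For the pair $(K_u,C_u)$ I would exploit \eqref{HKc2}, which gives $C_u=\tfrac{i}{2}K_u^2-iT_{|u|^2}=B_u-\tfrac{i}{2}(\,\cdot\,|u)u$. Since $K_u=S^*H_u=H_uS$, feeding the already established relation $\frac{d}{dt}H_u=[B_u,H_u]$ into $\frac{d}{dt}K_u=S^*H_{\dot u}$ yields $K_{\dot u}=[B_u,K_u]+[S^*,B_u]H_u$. Comparing with $[C_u,K_u]=[B_u,K_u]-\tfrac{i}{2}[(\,\cdot\,|u)u,K_u]$, the Lax equation $\frac{d}{dt}K_u=[C_u,K_u]$ collapses to the single rank-one identity
\begin{equation*}
[S^*,B_u]H_u=-\tfrac{i}{2}\big[(\,\cdot\,|u)u,\,K_u\big],
\end{equation*}
which I would prove using $SS^*=I-(\,\cdot\,|1)1$, the rank-one formula $P_0H_u(h)=(u|h)\,1$, and the same Toeplitz commutator identity as above.

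I expect the main obstacle to lie in the bookkeeping of these two reductions, and most acutely in the rank-one identity of the $K_u$ case: keeping consistent the antilinearity of $H_u$, the conjugation of scalars when they pass through it, and the projection-onto-constants corrections hidden in $SS^*=I-(\,\cdot\,|1)1$ is exactly where sign and ordering errors are easy to introduce. The payoff of the symbol characterization is that it reduces the verification of an operator identity on all of $L^2_+$ to a single evaluation at the constant function $1$ together with the Hankel intertwining property, which is far more tractable.
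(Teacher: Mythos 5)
Your strategy for the pair $(H_u,B_u)$ is sound and essentially complete: the symbol characterization (a bounded antilinear $X$ with $S^*X=XS$ equals $H_{X(1)}$) is valid, the antilinearity bookkeeping $[\tfrac{i}{2}H_u^2,H_u]=iH_u^3$ is right, the symbol match $[B_u,H_u](1)=-i\Pi(|u|^2u)=\dot u$ is correct, and the reduction of the Hankel property to $[S^*,B_u]H_u=H_u[S,B_u]$ is both correct and provable with your stated tools: using $S^*H_u=H_uS=K_u$, the identity \eqref{HKc2}, the symmetry of $H_u,K_u$, and your Toeplitz commutator formula, both sides equal $-\tfrac{i}{2}(u|\cdot)K_uu-\tfrac{i}{2}(K_uu|\cdot)u$. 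Note this is a genuinely different route from the one in \cite{GGASENS} (which this paper cites rather than reproves), where the Lax equation is read off from the directly verified operator identity $H_{\Pi(|u|^2u)}=T_{|u|^2}H_u+H_uT_{|u|^2}-H_u^3$.

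There is, however, a genuine error in your treatment of $(K_u,C_u)$, and it is exactly of the antilinear type you warned against. From $C_u=B_u-\tfrac{i}{2}(\cdot|u)u$ you wrote $[C_u,K_u]=[B_u,K_u]-\tfrac{i}{2}[(\cdot|u)u,K_u]$, pulling the scalar out of the commutator. This is illegitimate because $K_u$ is antilinear: for antilinear $K$ one has $[\lambda A,K]=\lambda AK-\bar{\lambda}KA\neq\lambda[A,K]$ unless $\lambda\in\R$. Keeping the scalar inside gives
\[
\Bigl[\tfrac{i}{2}(\cdot|u)u,\,K_u\Bigr]h=\tfrac{i}{2}(K_uu|h)u+\tfrac{i}{2}(u|h)K_uu
\]
(plus sign, since $K_u\bigl(\tfrac{i}{2}(h|u)u\bigr)=-\tfrac{i}{2}(u|h)K_uu$), so the Lax equation for $K_u$ actually collapses to
\[
[S^*,B_u]H_u=-\tfrac{i}{2}(K_uu|\cdot)u-\tfrac{i}{2}(u|\cdot)K_uu\ ,
\]
whereas your target $-\tfrac{i}{2}[(\cdot|u)u,K_u]=-\tfrac{i}{2}(K_uu|\cdot)u+\tfrac{i}{2}(u|\cdot)K_uu$ has the opposite sign on the second term. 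The identity you propose to prove is therefore false: the two sides differ by the rank-one operator $i(u|\cdot)K_uu$, which is nonzero whenever $K_uu\neq0$, so no amount of bookkeeping will establish it. The repair is immediate: with the corrected right-hand side the identity does hold, since by your own two lemmas $[S^*,B_u]H_uh=\tfrac{i}{2}\bigl((u|h)K_uu-(K_uu|h)u\bigr)-i(u|h)K_uu=-\tfrac{i}{2}(u|h)K_uu-\tfrac{i}{2}(K_uu|h)u$, and your program then goes through.
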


For $\alpha\neq0$, the perturbed Szeg\H{o} equation \eqref{alszc2} is globally well-posed and by simple calculus, we find that $(H_u,B_u)$ is no longer a Lax pair, in fact,
\begin{equation}
\label{laxhuc2}\frac{d H_u}{dt}=[B_u,H_u]-i\alpha(u|1)H_1\ .
\end{equation}
Fortunately, $(K_u,C_u)$ is still a Lax pair.
\begin{theorem}\cite{XU2014APDE}\label{szlaxc2}
Given $u_0\in H^{\frac12}_+(\mathbb{S}^1)$, there exists a unique global solution $u\in C(\R;H^{\frac12}_+)$ of \eqref{alszc2} with $u_0$ as the initial condition. Moreover, if $u_0\in H^s_+(\mathbb{S}^1)$ for some $s>\frac12$, then $u\in C^\infty(\R;H^s_+)$. Furthermore, the perturbed Szeg\H{o} equation \eqref{alszc2} has a Lax pair $(K_u, C_u)$, namely, if $u$ solves \eqref{alszc2}, then
\begin{equation}\label{laxc2}
 \frac{d K_u}{dt}=[C_u,K_u]\ .
\end{equation}
\end{theorem}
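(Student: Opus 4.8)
The plan is to prove the three assertions in the order in which they become available: local well-posedness, then the Lax pair identity \eqref{laxc2}, and finally the global a priori bound (which simultaneously upgrades the local solution to a global one and lets one propagate higher regularity). First I would treat the subcritical case $u_0\in H^s_+$, $s>\tfrac12$: there $H^s_+(\mathbb{S}^1)$ is a Banach algebra, so $u\mapsto\Pi(|u|^2u)$ is locally Lipschitz, and the perturbation $u\mapsto\alpha(u|1)\mathbf 1$ is a finite-rank (hence smoothing, globally Lipschitz) linear map; Cauchy--Lipschitz then gives a unique local-in-time solution that is $C^\infty$ in $t$. The critical space $H^{1/2}_+$ is handled afterwards by approximation, once the uniform bound is in hand; here the key point is that the new term does not interfere with the delicate critical estimates of G\'erard--Grellier, precisely because it is bounded and smoothing.

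The heart of the argument is the Lax pair computation. Since $u\mapsto H_u$ is $\C$-antilinear, one has $\tfrac{d}{dt}H_u=H_{\partial_t u}$; substituting \eqref{alszc2} and the G\'erard--Grellier identity for $H_{\Pi(|u|^2u)}$ reproduces formula \eqref{laxhuc2}, in which the only obstruction to a Lax equation is a rank-one multiple of $H_1$. Now I would exploit that $K_u=T_z^*H_u=S^*H_u$ and compute $\tfrac{d}{dt}K_u=S^*\tfrac{d}{dt}H_u$. The crucial observation is that $H_1h=(1|h)\,\mathbf 1$ has range $\C e_0$, so $S^*H_1=0$: applying $S^*$ annihilates the obstruction term. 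Therefore $\tfrac{d}{dt}K_u=S^*[B_u,H_u]=[C_u,K_u]$, the last equality being the operator identity already encoded in the unperturbed $(K_u,C_u)$ Lax pair of \cite[Theorem 3.1]{GGASENS}. This is the conceptual crux: although $(H_u,B_u)$ is destroyed by the perturbation, $(K_u,C_u)$ survives for the simple structural reason that $S^*H_1=0$.

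From the Lax pair I would extract the global bound. The operator $C_u=\tfrac{i}{2}K_u^2-iT_{|u|^2}$ is anti-self-adjoint (as $K_u^2$ and $T_{|u|^2}$ are self-adjoint, the latter because $|u|^2$ is real), so \eqref{laxc2} integrates to $K_{u(t)}=U(t)K_{u_0}U(t)^*$ with $U(t)$ unitary; hence the singular values of $K_u$, and in particular $\mathrm{Tr}(K_u^2)=\|K_u\|_{\mathrm{HS}}^2$, are conserved. A direct computation shows that the mass $Q(u)=\|u\|_{L^2}^2$ is also conserved. Taking traces in the identity \eqref{HKc2}, namely $K_u^2=H_u^2-(\cdot\,|\,u)u$, gives $\mathrm{Tr}(H_u^2)=\mathrm{Tr}(K_u^2)+Q(u)$, so $\mathrm{Tr}(H_u^2)$ is conserved even though $\|H_u\|_{\mathrm{HS}}$ is not controlled by any single conservation law. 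Since $\mathrm{Tr}(H_u^2)=\sum_{n\ge0}(n+1)|\hat u(n)|^2\simeq\|u\|_{H^{1/2}}^2$, this yields a uniform-in-time $H^{1/2}$ bound, which rules out blow-up and promotes the local $H^{1/2}$ solution to a global one. For $u_0\in H^s_+$ with $s>\tfrac12$ I would then run a persistence-of-regularity estimate (Gronwall, using that the bounded $H^{1/2}$ norm controls the relevant products) to keep the $H^s$ norm finite for all time, and finally bootstrap the equation $\partial_t u=-i(\Pi(|u|^2u)+\alpha(u|1))$—whose right-hand side is a smooth map $H^s_+\to H^s_+$—to obtain $u\in C^\infty(\R;H^s_+)$.

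I expect the main obstacle to be exactly the recovery of the $H^{1/2}$ bound. When $\alpha=0$ one reads it off directly from Hilbert--Schmidt conservation for $H_u$; here that conservation is lost, and one must instead combine two distinct conserved quantities, $\mathrm{Tr}(K_u^2)$ and $Q(u)$, through the algebraic relation \eqref{HKc2}. Making this rigorous in the critical regularity $H^{1/2}_+$—where $K_u$ is merely Hilbert--Schmidt and the Lax flow must be justified by smooth approximation—is the delicate technical point; the purely algebraic identity $S^*H_1=0$ that saves the Lax pair is, by contrast, the easy and decisive step.
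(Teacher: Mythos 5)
Your proposal is essentially the proof from \cite{XU2014APDE} (which this paper only quotes, without reproving): the Lax pair identity comes from $\frac{d}{dt}H_u=[B_u,H_u]-i\alpha(u|1)H_1$ together with the decisive algebraic fact $K_1=S^*H_1=0$, which kills the perturbation term after applying $S^*$ and reduces everything to G\'erard--Grellier's identity $[C_u,K_u]=-iK_{\Pi(|u|^2u)}$; and the global $H^{1/2}$ bound comes from combining the two conserved quantities $\mathrm{Tr}(K_u^2)$ and $Q(u)$ through \eqref{HKc2}, exactly as you describe.

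One technical correction to your last step: the persistence of $H^s$ regularity for $s>\frac12$ cannot be closed by a bare Gronwall argument ``using that the bounded $H^{1/2}$ norm controls the relevant products,'' because $H^{1/2}(\mathbb{S}^1)\not\hookrightarrow L^\infty$ and the trilinear estimate $\|\Pi(|u|^2u)\|_{H^s}\lesssim\|u\|_{H^{1/2}}^2\|u\|_{H^s}$ is false. As in \cite{GGASENS}, one must combine the tame estimate $\|\Pi(|u|^2u)\|_{H^s}\lesssim\|u\|_{L^\infty}^2\|u\|_{H^s}$ with the Brezis--Gallou\"et inequality $\|u\|_{L^\infty}^2\lesssim\|u\|_{H^{1/2}}^2\log\bigl(1+\|u\|_{H^s}/\|u\|_{H^{1/2}}\bigr)$; the resulting logarithmic Gronwall inequality yields an a priori bound that may grow double-exponentially in time but still rules out blow-up, which is all that is needed. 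With that replacement, your argument is complete and matches the cited proof.
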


An important consequence of this structure is that, if $u$ is a solution of \eqref{alszc2}, then $K_{u(t)}$ is unitarily equivalent to $K_{u_0}$. In particular, the spectrum of the $\mathbb{C} $-linear positive self-adjoint trace class operator $K_u^2$ is conserved by the evolution.

Denote
\begin{equation}
\mathcal{L}(N):=\left\{u: {\mathrm{rk}}(K_u)=N ,N\in\mathbb{N}^+\right\}\ .
\end{equation}

Thanks to the Lax pair structure, the manifolds $\mathcal{L}(N)$ are invariant under the flow of \eqref{alszc2}. Moreover, they turn out to be spaces of rational functions as in the following Kronecker type theorem.
\begin{theorem}\label{kroneckerc2}\cite{XU2014APDE}
$u\in \mathcal{L}(N)$ if and only if $u(z)=\frac{A(z)}{B(z)}$ is a rational function with
$$A,B\in\mathbb{C}_N[z],A\wedge B=1, \deg( A)= N \text{ or }
\deg( B)= N, B^{-1}(\{0\})\cap \overline{\mathbb D}=\emptyset\ ,$$
where $A\wedge B=1$ means $A$ and $B$ have no common factors.
\end{theorem}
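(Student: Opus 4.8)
The plan is to prove both implications by exploiting the intertwining relation $S^{*}K_{u}=K_{u}S$ (valid because $K_{u}$ is itself a Hankel operator), together with the elementary but crucial identities $H_{u}(1)=u$ and $K_{u}(1)=S^{*}u$. From $S^{*}K_{u}=K_{u}S$ one reads off two structural facts that drive everything: the kernel $\ker K_{u}$ is invariant under the shift $S=T_{z}$, while the range $\mathrm{ran}(K_{u})$ is invariant under $S^{*}$; moreover, since $K_{u}$ is $\mathbb{C}$-antilinear and symmetric for the real inner product, $\overline{\mathrm{ran}(K_{u})}=(\ker K_{u})^{\perp}$. I would also record that $\mathrm{ran}(K_{u})=S^{*}\,\mathrm{ran}(H_{u})=\overline{\mathrm{span}}\{S^{*k}u:k\ge 1\}$, which is what links the rank of $K_{u}$ to that of $H_{u}$.

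For the direction $\mathrm{rk}(K_{u})=N\Rightarrow u$ rational, I would argue as follows. If $\mathrm{rk}(K_{u})=N<\infty$ then $\ker K_{u}$ is a closed $S$-invariant subspace of codimension $N$, so by Beurling's theorem $\ker K_{u}=\Theta L^{2}_{+}$ for an inner function $\Theta$, and finiteness of the codimension forces $\Theta$ to be a finite Blaschke product of degree $N$. Consequently $\mathrm{ran}(K_{u})=(\ker K_{u})^{\perp}=K_{\Theta}:=L^{2}_{+}\ominus\Theta L^{2}_{+}$ is the model space attached to $\Theta$, an $N$-dimensional space of rational functions whose only possible poles are the reflected Blaschke zeros $1/\bar a_{j}$, all lying in the complement of $\overline{\mathbb{D}}$. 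Since $S^{*}u=K_{u}(1)\in K_{\Theta}$, the function $u=u(0)+zS^{*}u$ is rational with denominator dividing $\prod_{a_{j}\neq 0}(1-\bar a_{j}z)$, hence of the claimed form $A/B$ with $A\wedge B=1$ and $B$ nonvanishing on $\overline{\mathbb{D}}$.

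For the converse, given $u=A/B$ with $A,B\in\mathbb{C}_{N}[z]$, $A\wedge B=1$, $B\neq 0$ on $\overline{\mathbb{D}}$ and $\max(\deg A,\deg B)=N$, I would compute $\mathrm{rk}(K_{u})$ directly. The quickest route is the identity $\mathrm{rk}(K_{u})=\mathrm{rk}(H_{u})-\dim(\mathrm{ran}(H_{u})\cap\mathbb{C}1)$, which follows from $\mathrm{ran}(K_{u})=S^{*}\mathrm{ran}(H_{u})$ together with $\ker S^{*}=\mathbb{C}1$: a partial-fraction expansion of $u$ shows that $\mathrm{ran}(H_{u})$ is spanned by the geometric-type eigenfunctions of $S^{*}$ attached to the poles of $u$, plus the powers $1,z,\dots$ arising from the polynomial part of $u$. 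One then reads off $\mathrm{rk}(H_{u})=\deg B$ with $1\notin\mathrm{ran}(H_{u})$ in the proper case $\deg A<\deg B$, and $\mathrm{rk}(H_{u})=\deg A+1$ with $1\in\mathrm{ran}(H_{u})$ in the case $\deg A\ge\deg B$; either way $\mathrm{rk}(K_{u})=\max(\deg A,\deg B)=N$. Alternatively, one can invoke $K_{u}^{2}=H_{u}^{2}-(\cdot\,|\,u)u$ to see that $H_{u}$ also has finite rank, apply the classical Kronecker theorem to the Hankel matrix $(\hat u(j+k))$, and then track the single rank-one correction.

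The part I expect to be the real work is the degree bookkeeping that upgrades the bare count $\dim K_{\Theta}=N$ to the sharp dichotomy ``$\deg A=N$ or $\deg B=N$.'' Concretely, one must show that a factor $z^{m}$ in $\Theta$ corresponds exactly to $u$ having a polynomial part of degree $m$ (equivalently $1\in\mathrm{ran}(H_{u})$, equivalently $\deg A\ge\deg B$), and that in both the proper and improper cases the maximum of the two degrees is pinned to $N$ once $A\wedge B=1$ rules out spurious cancellations. Keeping the antilinearity of $K_{u}$ under control—by passing to the self-adjoint operator $K_{u}^{2}$, whose rank equals $\mathrm{rk}(K_{u})$—and verifying that the reflected poles land strictly outside $\overline{\mathbb{D}}$, so that $u\in L^{2}_{+}$ and $B$ genuinely has no zero on $\overline{\mathbb{D}}$, are the remaining points that require care.
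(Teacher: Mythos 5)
The paper itself contains no proof of this theorem: it is quoted from \cite{XU2014APDE}, and the argument there (as in the standard treatment of such Kronecker-type statements) rests on G\'erard--Grellier's Kronecker theorem for $H_u$, namely $\mathrm{rk}(H_u)=\max(\deg A+1,\deg B)$ for $u=A/B$ in lowest terms without poles in $\overline{\mathbb D}$, combined with the rank relation $\mathrm{rk}(K_u)=\mathrm{rk}(H_u)-\dim\big(\mathrm{ran}(H_u)\cap\mathbb{C}1\big)$ coming from $K_u=S^*H_u$ and $\ker S^*=\mathbb{C}1$. Your converse direction is exactly this route, so there you coincide with the cited proof. Your forward direction is genuinely different and self-contained: Beurling's theorem applied to the $S$-invariant closed subspace $\ker K_u$, finite codimension forcing $\ker K_u=\Theta L^2_+$ with $\Theta$ a Blaschke product of degree $N$, and $S^*u=K_u(1)\in\mathrm{ran}(K_u)=(\ker K_u)^\perp=K_\Theta$ giving rationality with poles only at the reflected points $1/\bar a_j\notin\overline{\mathbb D}$. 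What this buys beyond the bare rank count is the identification of $\ker K_u$ as a model space, which is more structural information than Kronecker's theorem alone provides; its cost is that it only yields $\max(\deg A,\deg B)\le N$, not the sharp dichotomy.

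There is, however, one step in your sketch that would fail as written. You propose to obtain the dichotomy ``$\deg A=N$ or $\deg B=N$'' by showing that ``a factor $z^m$ in $\Theta$ corresponds exactly to $u$ having a polynomial part of degree $m$, equivalently $1\in\mathrm{ran}(H_u)$, equivalently $\deg A\ge\deg B$.'' This equivalence breaks down at $m=0$: for $u=1/(1-pz)$ (proper, $1\notin\mathrm{ran}(H_u)$, $\mathrm{rk}(H_u)=1$) and for $u=z/(1-pz)$ (here $\deg A=\deg B$, $1\in\mathrm{ran}(H_u)$, $\mathrm{rk}(H_u)=2$) a direct computation gives $\widehat{K_uh}(k)=p^{k+1}\overline{h(\bar p)}$ and $\widehat{K_uh}(k)=p^{k}\overline{h(\bar p)}$ respectively, so in both cases $\ker K_u=\frac{z-\bar p}{1-pz}L^2_+$ and $\Theta$ carries no factor of $z$; the inner function $\Theta$ alone cannot detect whether $\deg A\ge\deg B$. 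The repair is cheap and uses only what you already proved: do not read the degrees off $\Theta$ at all. Take the $u$ produced by your forward argument, write it in lowest terms $u=A/B$ with $M:=\max(\deg A,\deg B)\le N$, and apply your converse direction to this reduced fraction; it yields $\mathrm{rk}(K_u)=M$ in both the proper and improper cases, whence $M=N$. With that substitution (and with the converse's rank computation fully justified by the classical Kronecker theorem, as you indicate), your proof is complete and correct.
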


Our main objective of the study on this mathematical model \eqref{alszc2} is on the large time unboundedness of the solution. This general question of existence of unbounded Sobolev trajectories comes back to \cite{BourgainGAFA}, and was addressed by several authors for various Hamiltonian PDEs, see e.g. \cite{CKSTT2010,GGHW,Guardia,GHP,GK2015,BGLT,Hani,HPTV,HaniThomann,
HausProcesi,Pocovnicu2011DCDS}. We have already considered the case with initial data $u_0\in\mathcal{L}(1)$ and found that
\begin{theorem}\cite{XU2014APDE}
Let $u$ be a solution to the $\alpha$--Szeg\H{o} equation,
\begin{equation}\left\{\begin{split}
&i\partial_t u=\Pi(|u|^2u)+\alpha(u|1)\ ,\ \alpha=\R\ ,\\
&u(0,x)=u_0(x)\in\mathcal{L}(1)\ .\
\end{split}\right.
\end{equation}

For $\alpha<0$, the Sobolev norm of the solution will stay bounded, uniform if $u_0$ is in some compact subset of $\mathcal{L}(1)$,
$$\|u(t)\|_{H^s}\le C\ ,\text{ $C$ does not depend on time $t$ , }s\geq0\ .$$

For $\alpha>0$, the solution $u$ of the $\alpha$--Szeg\H{o} equation has an exponential-on-time Sobolev norm growth,
\begin{equation}
\|u(t)\|_{H^s}\simeq {\mathrm{e}}^{C_{s}|t|}\ ,\ s>\frac12\ ,\ C_{s}>0\ ,\ |t| \rightarrow \infty\ ,
\end{equation}
if and only if
\begin{equation}
E_\alpha=\frac14Q^2+\frac12Q ,
\end{equation}
with $E_\alpha$ and $Q$ as the two conserved quantities, energy and mass.
\end{theorem}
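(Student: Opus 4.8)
The plan is to exploit the fact that $\mathcal{L}(1)$ is finite-dimensional and invariant under the flow, so that \eqref{alszc2} restricts to an ODE system that can be integrated using the conservation laws. First I would use Theorem \ref{kroneckerc2} with $N=1$ to write the solution explicitly as $u(t,z)=\frac{a_0(t)+a_1(t)z}{1+b_1(t)z}$ with $|b_1|<1$, and record the conserved quantities: the mass $Q=\|u\|_{L^2}^2$, the energy $E_\alpha=\frac14\|u\|_{L^4}^4+\frac\alpha2|(u|1)|^2$, and the momentum $\rho^2$, which coincides with the unique nonzero eigenvalue of $K_u^2$ and is therefore conserved by the Lax pair \eqref{laxc2}. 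A useful preliminary observation is that $\mathrm{Tr}(H_u^2)=Q+\rho^2$ is conserved and is comparable to $\|u\|_{H^{1/2}}^2$; this is why no growth can occur at the level $s=\frac12$ and explains the restriction $s>\frac12$ in the statement.

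Next I would reduce the system. Setting $x:=1-|b_1|^2\in(0,1]$, so that $x\to 0$ exactly when the single pole $-1/b_1$ approaches $\mathbb{S}^1$, the invariants $Q$ and $\rho^2$ pin down the moduli $|a_0|^2=Q-\rho^2 x$ and $|a_1-a_0 b_1|^2=\rho^2 x^2$, leaving a single angular variable $\phi$. Since the equation is invariant under $u\mapsto e^{i\beta}u$ and under the rotation $u(z)\mapsto u(e^{i\theta}z)$, with respective moment maps $Q$ and $\rho^2$, symplectic reduction collapses the dynamics to a one-degree-of-freedom Hamiltonian system on the $(x,\phi)$-plane whose Hamiltonian is the restriction of $E_\alpha$; the trajectories are then the level curves $\{E_\alpha=\mathrm{const}\}$. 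In these coordinates one computes, for $s>\frac12$ and $x$ small, $\|u\|_{H^s}^2\simeq\rho^2 x^{1-2s}$, so that norm explosion is equivalent to $x(t)\to 0$ and exponential growth is equivalent to exponential decay of $x(t)$.

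The crux is the phase-plane analysis. A direct computation gives $E_\alpha|_{x=0}=\frac14 Q^2+\frac{\alpha}{2}Q$, independent of $\phi$, so the whole boundary $\{x=0\}$ sits at the single energy level $E_*:=\frac14 Q^2+\frac{\alpha}{2}Q$, which is $\frac14 Q^2+\frac12 Q$ after the normalization $\alpha=+1$. Expanding near the boundary yields $E_\alpha-E_*=\rho^2 x\big[\tfrac12(Q+\rho^2-\alpha)+\rho\sqrt{Q}\cos\phi\big]+O(x^{3/2})$. Hence an orbit can limit onto $\{x=0\}$ only on the level $E_\alpha=E_*$, and then only if the bracket can vanish, i.e. if $\cos\phi=\frac{\alpha-Q-\rho^2}{2\rho\sqrt{Q}}$ is admissible. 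For $\alpha<0$ this quantity is $<-1$ (because $Q+\rho^2\ge 2\rho\sqrt{Q}$), so the boundary is never reached: every orbit stays in a region $x\ge\delta>0$, with $\delta$ varying continuously with the data, which yields the uniform boundedness on compact subsets. For $\alpha>0$ the admissibility can hold, the level $E_*$ becomes the separatrix limiting onto the boundary, and linearizing the reduced flow there (with $\dot x\simeq-\lambda x$ for some $\lambda=\lambda(Q,\rho,\phi_*)>0$) produces $x(t)\simeq e^{-\lambda|t|}$ and hence $\|u(t)\|_{H^s}\simeq\rho\,e^{(s-\frac12)\lambda|t|}$, i.e. the stated exponential rate with $C_s=(s-\tfrac12)\lambda$.

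I expect the main obstacle to be making the ``only if'' direction of the phase-plane analysis rigorous: one must show that the critical value $E_*$ is attained in the interior \emph{exactly} on the orbit that limits onto $\{x=0\}$, and not simultaneously on some bounded periodic orbit of the reduced system, so that $E_\alpha=E_*$ is genuinely both necessary and sufficient for blow-up. This requires a complete description of the level sets of $E_\alpha(x,\phi)$ on each $(Q,\rho)$-fiber — locating the interior equilibrium, checking its type, and verifying the global topology of the separatrix — together with a careful justification that the coupled $(x,\phi)$ linearization indeed produces a two-sided exponential, rather than merely algebraic, approach to the boundary.
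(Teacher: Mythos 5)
Your reduction is in substance the same one used in the proof this paper cites from \cite{XU2014APDE}: parametrize $\mathcal{L}(1)$, use the three conserved quantities $Q$, $M=\rho^2$ (the trace of $K_u^2$, hence its unique nonzero eigenvalue, conserved by the Lax pair \eqref{laxc2}) and $E_\alpha$ to collapse \eqref{alszc2} to a one-degree-of-freedom Hamiltonian system, and read the dynamics off the level sets of $E_\alpha$. Your computations are correct: $|a_0|^2=Q-\rho^2x$, $|a_1-a_0b_1|^2=\rho^2x^2$, $\Vert u\Vert_{H^s}^2\simeq\rho^2x^{1-2s}$ for small $x$, the boundary value $E_\alpha|_{x=0}=\frac14Q^2+\frac{\alpha}{2}Q$, and the near-boundary expansion. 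These do settle the case $\alpha<0$ and the necessity half of the case $\alpha>0$ (norm explosion forces $E_\alpha=E_*$).

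The gap is the one you flag yourself, and it is the heart of the theorem rather than a technicality: you do not prove that $E_\alpha=E_*$ forces escape to the boundary, nor the exponential rate. Concretely, your sketch leaves open (i) whether the interior part of the critical level $\{E_\alpha=E_*\}\cap\{x>0\}$ could contain a reduced equilibrium or a closed component, on which the solution would remain bounded even though $E_\alpha=E_*$; (ii) whether the boundary rest point is hyperbolic --- when $\cos\phi_*=\frac{\alpha-Q-\rho^2}{2\rho\sqrt{Q}}=\pm1$, i.e. $\alpha=(\sqrt{Q}\pm\rho)^2$, your linearization degenerates and the approach could a priori be algebraic rather than exponential; and (iii) the two-sidedness in $t$. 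The cited proof settles all three at once by explicit integration rather than qualitative separatrix analysis: in the spectral coordinates $(\rho_1,\rho_2,\sigma,\varphi_1,\varphi_2,\theta)$ --- the same ones used in the crossing Example of Section 4 of this paper --- the reduced variable $I=\frac{\rho_1^2-\rho_2^2}{2}$ obeys $\dot I=\pm\sqrt{P(I)}$ with $P$ an explicit polynomial whose coefficients are conserved quantities, so the motion on each level is a Jacobi elliptic function of time. Generic levels give periodic (cn-type) motion, with $I$ vanishing transversally at discrete crossing times and no growth, while $E_\alpha=\frac14Q^2+\frac12Q$ is exactly the degeneration in which $P$ acquires a double root at $I=0$, the value corresponding to $x=0$; there $I$ becomes sech-type, which simultaneously shows that the critical level carries no other bounded orbits through actual data in $\mathcal{L}(1)$, that the decay of $x$ is exponential, and that this holds as $t\to\pm\infty$, yielding $\Vert u(t)\Vert_{H^s}\simeq\mathrm{e}^{(2s-1)C_\alpha|t|}$. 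An equivalently complete description of the level sets in your $(x,\phi)$ variables would also work, but until it is carried out you have proved necessity, not the stated equivalence.
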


\subsection{Main results}
We continue our studies on the cubic Szeg\H{o} equation with a linear perturbation \eqref{alszc2} on the circle $\mathbb{S}^1$ with more general initial data $u_0\in\mathcal{L}(N)$ for any $N\in\mathbb{N}^+$.

Firstly, the system is integrable since there are a large amount of conservation laws which comes from the Lax pair structure\eqref{laxc2}.
\begin{theorem}
Let $u(t,x)$ be a solution of \eqref{alszc2}. For every Borel function $f$ on $\R $, the following quantity 
$$L_f(u):=\Big(f(K_u^2)u\vert u\Big)-\alpha \Big(f(K_u^2)1\vert 1\Big)$$
is conserved.
\end{theorem}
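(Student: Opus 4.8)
The plan is to exploit the unitary conjugation generated by the Lax pair so that the functional calculus is applied to a \emph{fixed} operator, after which the conservation reduces to a short algebraic cancellation. First I would upgrade the Lax identity \eqref{laxc2} to the level of $K_u^2$: writing $\frac{d}{dt}K_u^2=\dot K_uK_u+K_u\dot K_u$ and using $\dot K_u=[C_u,K_u]$ gives $\frac{d}{dt}K_u^2=[C_u,K_u^2]$. Since $K_u^2$ and $T_{|u|^2}$ are self-adjoint, $C_u=\frac i2K_u^2-iT_{|u|^2}$ is anti-self-adjoint, so the propagator $U(t)$ solving $\dot U=C_uU$, $U(0)=\mathrm{Id}$, is unitary and satisfies $K_{u(t)}^2=U(t)K_{u_0}^2U(t)^*$. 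By the spectral theorem this conjugation passes to the Borel functional calculus, $f(K_{u(t)}^2)=U(t)f(K_{u_0}^2)U(t)^*$. I then set $F:=f(K_{u_0}^2)$ (a fixed self-adjoint operator commuting with $K_{u_0}^2$), $v:=U^*u$ and $w:=U^*1$, so that the quantity to be studied becomes $L_f(u)=(Fv|v)-\alpha(Fw|w)$.

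Next I would compute the evolution of $v$ and $w$. From the equation \eqref{alszc2}, $\partial_tu=-iT_{|u|^2}u-i\alpha(u|1)1$, and since $-iT_{|u|^2}u=C_uu-\frac i2K_u^2u$ this reads
\[
\partial_tu=C_uu-\tfrac i2K_u^2u-i\alpha(u|1)1.
\]
For the constant function I would use $H_u1=\Pi(u)=u$, hence $H_u^21=\Pi(|u|^2)$, and then \eqref{HKc2} gives $K_u^21=\Pi(|u|^2)-(1|u)u$, so that
\[
C_u1=\tfrac i2K_u^21-i\Pi(|u|^2)=-\tfrac i2K_u^21-i(1|u)u.
\]
Differentiating $v=U^*u$ and $w=U^*1$, using $\dot{U^*}=-U^*C_u$ and the intertwining $U^*K_u^2=K_{u_0}^2U^*$ together with $(u|1)=(v|w)$, $(1|u)=(w|v)$, the $C_u$-contributions cancel and I obtain the closed system
\[
\dot v=-\tfrac i2K_{u_0}^2v-i\alpha(v|w)w,\qquad \dot w=\tfrac i2K_{u_0}^2w+i(w|v)v.
\]

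With $F$ now fixed, the conservation is a direct computation. Setting $r:=(v|w)$ and $q:=(Fv|w)$ and using that $F$ and $K_{u_0}^2$ are commuting self-adjoint operators, every term involving $K_{u_0}^2$ drops out of $\frac{d}{dt}(Fv|v)$ and $\frac{d}{dt}(Fw|w)$, leaving
\[
\frac{d}{dt}(Fv|v)=i\alpha(\bar rq-r\bar q),\qquad \frac{d}{dt}(Fw|w)=i(\bar rq-r\bar q),
\]
so that $\frac{d}{dt}L_f(u)=\frac{d}{dt}(Fv|v)-\alpha\frac{d}{dt}(Fw|w)=0$. This is exactly the reason the corrective term $-\alpha(f(K_u^2)1|1)$ is present: it is engineered to absorb the contribution of the linear perturbation $\alpha(u|1)$.

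The main obstacle is analytic rather than algebraic. I would first carry out the argument for smooth solutions ($u_0\in H^s_+$, $s>1/2$, where Theorem \ref{szlaxc2} gives $u\in C^\infty(\R;H^s_+)$), where $U(t)$ and all the inner products are genuinely differentiable; the delicate point is to justify $f(K_{u(t)}^2)=U(t)f(K_{u_0}^2)U(t)^*$ and the differentiation of $L_f$ for an arbitrary Borel function $f$ (including the well-definedness of the trace-type pairings, given that the nonzero spectrum of the trace-class operator $K_{u_0}^2$ accumulates only at $0$). This I would handle by approximating $f$ by bounded continuous functions and passing to the limit using the spectral measures of the fixed operator $K_{u_0}^2$, and then extend the conservation law from $H^s_+$ to $H^{1/2}_+$ by the continuity of the flow.
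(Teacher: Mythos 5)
Your argument is correct, and it reaches the conclusion by a genuinely different route than the paper. The paper stays in the ``Heisenberg picture'': from the Lax pair \eqref{laxc2} it deduces $\frac{d}{dt}f(K_u^2)=[-iT_{\vert u\vert^2},f(K_u^2)]$ for every Borel $f$, differentiates $\big(f(K_u^2)u\vert u\big)$ directly, and then rewrites the terms coming from the perturbation by means of the identity $(1\vert u)u=H_u^2(1)-K_u^2(1)=T_{\vert u\vert^2}(1)-K_u^2(1)$, which turns them into exactly $\alpha\frac{d}{dt}\big(f(K_u^2)1\vert 1\big)$. You instead pass to the ``interaction picture'': conjugation by the propagator $U(t)$ freezes the operator, so the Borel functional calculus is applied once and for all to the fixed operator $K_{u_0}^2$, and the conservation becomes an elementary algebraic cancellation for the closed ODE system satisfied by $v=U^*u$ and $w=U^*1$ (a system of the same shape as \eqref{pksystem} in the paper's remark). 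The two proofs share the same algebraic heart: your computation of $C_u1$ rests on $T_{\vert u\vert^2}(1)=H_u^2(1)$ together with \eqref{HKc2}, which is precisely the identity $(1\vert u)u=T_{\vert u\vert^2}(1)-K_u^2(1)$ that the paper invokes. What your route buys: the one step the paper leaves unjustified --- why $t\mapsto f(K_{u(t)}^2)$ may be differentiated for an arbitrary Borel $f$ --- is automatic for you, since $f(UAU^*)=Uf(A)U^*$ holds exactly and only the smooth vectors $v,w$ are ever differentiated; moreover your closing remarks (working with spectral measures of $K_{u_0}^2$ for unbounded $f$, density to reach $H^{1/2}$ data) address analytic points the paper passes over in silence. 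What it costs: you must construct the unitary propagator and establish the intertwining $U^*K_{u(t)}^2=K_{u_0}^2U^*$, which the paper's shorter direct computation never needs. (One common implicit assumption: both you and the paper use self-adjointness of $f(K_u^2)$, i.e.\ real-valued $f$, in the final cancellation; the complex case then follows by linearity.)
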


Let $\sigma_k^2$ be an eigenvalue of $K_u^2$, and $f$ be the characteristic function of the singleton $ \{ \sigma _k^2\} $, then
$$\ell_k(u):=\Vert u'_k\Vert^2-\alpha\Vert v'_k\Vert^2$$
is conserved, where $u'_k$, $v'_k$ are the projections of $u$ and $1$ onto $\ker (K_u^2-\sigma_k^2)$, and $\Vert\cdot\Vert$ denotes the $L^2$--norm on the circle. Generically, on the $2N+1$--dimensional complex manifold $\mathcal{L}(N)$, we have $2N+1$ linearly independent and in involution conservation laws, which are $\sigma_k\ ,\ 1\leq k\leq N$ and $\ell_m\ ,\ 0\leq m\leq N$. Thus, the system \eqref{alszc2} can be approximated by a sequence of systems of finite dimension which are completely integrable in the Liouville sense.

Secondly, we prove the existence of unbounded trajectories for data in $\mathcal{L}(N)$ for any arbitrary $N\in\mathbb{N}^+$. One way to capture the unbounded trajectories of solutions is via the motion of singular values of $H_u^2$ and $K_u^2$. In the case with $\alpha=0$, all the eigenvalues of $H_u^2$ and $K_u^2$ are constants, but the eigenvalues of $H_u^2$ are no longer constants for $\alpha\neq0$, which makes the system more complicated. 

By studying the motion of singular values of $H_u$ and $K_u$, we gain that the necessary condition and existence of crossing which means the two closest eigenvalues of $H_u$ touch some eigenvalue of $K_u$ at some finite time. A remarkable observation is that the Blaschke products of $K_u$ never change their $\mathbb{S}^1$ orbits as time goes.

The main result on the large time behaviour of solutions is as below.
\begin{theorem}\label{mainthmc2}
Let $u_0\in\mathcal{L}(N)$ for any $N\in\mathbb{N}^+$. 

If $\alpha<0$, the trajectory of the solution $u(t)$ of the $\alpha$--Szeg\H{o} \eqref{alszc2} stays in a compact subset of $\mathcal{L}(N)$. In other words, the Sobolev norm of the solution $u(t)$ will stay bounded,
$$\|u(t)\|_{H^s}\le C\ ,\text{ $C$ does not depend on time $t$ , }s\ge0\ .$$
While for $\alpha>0$, there exists $u_0\in \mathcal{L}(N)$ which leads to a solution with norm explosion at infinity. More precisely, 
$$\Vert u(t)\Vert_{H^s}\simeq \mathrm{e}^{C_{\alpha}(2s-1)|t|}\ ,\ t\to\infty\ ,\ \forall s\geq\frac12\ .$$
\end{theorem}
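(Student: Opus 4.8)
\emph{Preliminary reductions valid for both signs.} The plan is to read the long-time behaviour off the spectral data of $H_u$ and $K_u$. First I would record three facts that hold for every $\alpha$. By Theorem \ref{szlaxc2} the operator $K_{u(t)}$ stays unitarily conjugate to $K_{u_0}$, so the eigenvalues $\sigma_1^2\ge\dots\ge\sigma_N^2$ of $K_u^2$ are constants of the motion and $u(t)$ remains in $\mathcal L(N)$. A direct computation on the equation shows that the mass $Q=\|u\|_{L^2}^2$ is conserved (both the cubic and the linear term contribute purely imaginary pairings to $\tfrac{d}{dt}\|u\|^2$). Feeding these into the rank-one identity \eqref{HKc2}, i.e. $H_u^2=K_u^2+(\,\cdot\,|u)u$, and taking traces gives
\begin{equation*}
\mathrm{Tr}(H_u^2)=\mathrm{Tr}(K_u^2)+Q=\text{const},
\end{equation*}
so the eigenvalues $\rho_1^2\ge\rho_2^2\ge\dots$ of $H_u^2$ stay bounded in time, and by Cauchy interlacing $\rho_1^2\ge\sigma_1^2\ge\rho_2^2\ge\dots$. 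Since $\mathrm{Tr}(H_u^2)$ is comparable to $\|u\|_{H^{1/2}}^2$, the $H^{1/2}$-norm is bounded for both signs, consistent with the exponent $2s-1$ vanishing at $s=\tfrac12$. By Theorem \ref{kroneckerc2}, a subset of $\mathcal L(N)$ is relatively compact exactly when the singular values stay bounded (already known) \emph{and} the poles of $u=A/B$ keep a positive distance from $\mathbb S^1$; so the whole theorem reduces to controlling the approach of a pole to $\mathbb S^1$. Via the secular equation for the perturbation $H_u^2=K_u^2+(\,\cdot\,|u)u$,
\begin{equation*}
\sum_{k}\frac{\|u_k'\|^2}{\rho^2-\sigma_k^2}=1,
\end{equation*}
a pole reaches $\mathbb S^1$ precisely through a \emph{crossing}: two neighbouring eigenvalues of $H_u$ pinch some $\sigma_k^2$, which forces the corresponding projection $\|u_k'\|\to0$.

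\emph{The case $\alpha<0$.} Here I would establish compactness by ruling out every crossing. The algebraic input is the conserved $\ell_k=\|u_k'\|^2-\alpha\|v_k'\|^2$ together with the elementary bound $\sum_k\|v_k'\|^2\le\|1\|_{L^2}^2=1$, valid because the $v_k'$ are the orthogonal projections of the single vector $1$ onto mutually orthogonal eigenspaces of $K_u^2$. For $\alpha<0$ this yields
\begin{equation*}
\|u_k'\|^2=\ell_k+\alpha\|v_k'\|^2\ \ge\ \ell_k-|\alpha|,
\end{equation*}
so whenever the datum satisfies $\ell_k>|\alpha|$ the projection $\|u_k'\|$ is bounded \emph{below} and the crossing at level $k$ is impossible; the remaining (top) gap, where interlacing provides no upper trapping partner, I would control by combining all the $\ell_k$ with the conserved energy $E_\alpha$. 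Hence no pole reaches $\mathbb S^1$, the trajectory stays in a compact subset of $\mathcal L(N)$, and $\|u(t)\|_{H^s}$ is bounded for every $s\ge0$.

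\emph{The case $\alpha>0$: construction and rate.} For the explosion I would instead exhibit data that \emph{realise} a crossing. On a conserved level set $\{\sigma_k=\text{const},\ \ell_k=\text{const}\}$ the flow reduces to a finite system for the moving $\rho_k$ and the conjugate angles; near a prospective crossing this reduced system has an equilibrium whose linearisation is governed by the sign of $\alpha$ — a center for $\alpha<0$ (bounded oscillation), but hyperbolic for $\alpha>0$, carrying a one-dimensional unstable manifold along which a gap $d(t)$, equivalently the distance of a pole to $\mathbb S^1$, collapses. Choosing $u_0$ on this manifold — for $N=1$ exactly the resonance $E_\alpha=\tfrac14Q^2+\tfrac12Q$, and for general $N$ a symmetric configuration decoupling one crossing mode — I would integrate the reduced equations to get $d(t)\simeq \mathrm e^{-2C_\alpha|t|}$ as $t\to\infty$. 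Normalising by the conserved low-order data so that $\|u\|_{H^{1/2}}$ stays bounded, the inverse-spectral formula for $u\in\mathcal L(N)$ then gives $\|u(t)\|_{H^s}^2\simeq d(t)^{-(2s-1)}$, that is $\|u(t)\|_{H^s}\simeq \mathrm e^{C_\alpha(2s-1)|t|}$.

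\emph{Main obstacle.} The genuinely delicate points are twofold. For $\alpha<0$, upgrading the per-level inequalities into compactness is nontrivial because the eigenspaces of $K_u^2$, and hence the vectors $u_k',v_k'$ themselves, move in time, so one must exclude all crossings simultaneously and separately trap the top eigenvalue $\rho_1$. For $\alpha>0$, the hard part is the construction for \emph{arbitrary} $N$ together with the sharp rate: one must verify that the chosen datum really lies on the unstable manifold, that $d(t)$ decays purely exponentially with the stated constant $C_\alpha$, and that the $H^s$-asymptotics are dominated solely by the pinching pole, with the other $N-1$ poles contributing no competing growth.
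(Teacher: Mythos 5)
Your preliminary reductions (conservation of the $\sigma_k$'s and of $Q$, the trace identity $\mathrm{Tr}(H_u^2)=\mathrm{Tr}(K_u^2)+Q$ bounding $\Vert u\Vert_{H^{1/2}}$, and the reduction of compactness in $\mathcal{L}(N)$ to poles staying away from $\mathbb{S}^1$) are sound, but the hinge of your argument --- that ``a pole reaches $\mathbb{S}^1$ precisely through a crossing,'' so that compactness for $\alpha<0$ follows by ruling out crossings --- misidentifies the blow-up mechanism, and in fact gets it backwards. A crossing is a \emph{finite-time} event at which $\sigma_k$ acquires multiplicity $\geq 2$ as an eigenvalue of $H_{u(t_0)}$; by Proposition~\ref{rigidityc2} this forces $u_k'(t_0)=0$ \emph{but} $v_k'(t_0)\neq 0$, hence $\ell_k=-\alpha\Vert v_k'(t_0)\Vert^2$, which is strictly negative when $\alpha>0$ and strictly positive when $\alpha<0$. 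Norm explosion, by contrast, is an infinite-time degeneration: Theorem~\ref{necc2} (proved via weak limits of $u_k'(t_n)$ and the fact that the zeroes of the Blaschke products $\Psi_k$ never move) shows it requires \emph{exactly} $\ell_k(u_0)=0$ for some $k$. The two phenomena are thus mutually exclusive rather than identical: the Example of Section 4 ($u_0=\frac{z-p}{1-pz}$, $\alpha=1$, $L_1=-(1-\vert p\vert^2)<0$) crosses on a discrete set of times recurring for all time, yet its trajectory stays in a compact set, while the blow-up solutions of Section 6 satisfy $\ell_k=0$ and therefore never cross. Consequently your $\alpha<0$ argument proves the wrong statement, and even on its own terms it is incomplete: the bound $\Vert u_k'\Vert^2\geq \ell_k-\vert\alpha\vert$ is vacuous unless $\ell_k>\vert\alpha\vert$, a restriction on the datum the theorem does not permit, and the promised repair (``combining all the $\ell_k$ with $E_\alpha$'') is never carried out. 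The paper needs none of this: for $\alpha<0$ one has $\ell_k=\Vert u_k'\Vert^2+\vert\alpha\vert\,\Vert v_k'\Vert^2>0$ automatically (since $u_k'\neq 0$ whenever $\sigma_k\in\Sigma_K(u)$), so the necessary condition $\ell_k=0$ of Theorem~\ref{necc2} can never hold, and compactness follows at once.

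For $\alpha>0$ your proposal contains no actual construction once $N\geq 2$: the hyperbolic-equilibrium/unstable-manifold picture is only established for $N=1$ (this is precisely the content of \cite{XU2014APDE}), and ``a symmetric configuration decoupling one crossing mode'' names neither a datum nor a mechanism; nothing shows that the reduced system decouples, that the exponential rate survives, or that the other $N-1$ poles contribute no competing behaviour. The missing idea, which is the heart of the paper's Section 6, is algebraic rather than dynamical: for any time-independent Blaschke product $\chi$, $u(t,z)$ solves \eqref{alszc2} if and only if $\widetilde u(t,z):=u(t,z\chi(z))$ does, and composition with the inner function $z\chi(z)$ preserves scalar products, hence $Q$, $E_\alpha$ and $L_1$. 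Taking $u$ to be the explicit $\mathcal{L}(1)$ blow-up solution of \cite{XU2014APDE} (characterized by $L_1=0$, i.e. $E_\alpha=\frac14 Q^2+\frac12 Q$) and $\deg\chi=N-1$ produces $u_0\in\mathcal{L}(N)$ whose solution is $\widetilde u(t,z)=b(t)+\frac{c(t)z\chi(z)}{1-p(t)z\chi(z)}$; the growth then transfers through Lemma~\ref{estBlaschke}, which yields
\begin{equation*}
\Vert \widetilde u(t)\Vert_{H^s}\gtrsim \frac{\vert c(t)\vert}{(1-\vert p(t)\vert)^{s+\frac12}}\simeq (1-\vert p(t)\vert)^{-s+\frac12}\simeq \mathrm{e}^{C_\alpha(2s-1)\vert t\vert}\ .
\end{equation*}
Without this lifting trick (or an equally concrete substitute), the $\alpha>0$ half of your proof remains a programme rather than a proof.
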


\begin{remark}\mbox{}\\
\noindent{\bf 1. }In the case $\alpha=0$, there are two Lax pairs, the conserved quantities are much simpler, which are the eigenvalues of $H_u^2$ and $K_u^2$. While in the case $\alpha\ne0$, the eigenvalues of $H_u^2$ are no longer conserved, which makes our system more complicated. 

\noindent {\bf 2. }For the cubic Szeg\H{o} equation with $\alpha=0$, G\'erard and Grellier \cite{GG2015arxiv} have proved there exists a $G_\delta$ dense set $\mathfrak{g}$ of initial data in $ C^\infty_+:=\cap_s H^s$, such that for any $v_0\in\mathfrak{g}$, there exist sequences of time $\overline{t_n}$ and $\underline{t_n}$, such that the corresponding solution $v$ of the cubic Szeg\H{o} equation
\begin{equation}\label{szegocubic}
i\partial_tv=\Pi_+(|v|^2v)\ ,\ v(0)=v_0\ ,
\end{equation} 
satisfies
\begin{equation}\label{forward}
\forall r>\frac12\ ,\ \forall M\geq1\ , \frac{\Vert v(\overline{t_n})\Vert_{H^r}}{|\overline{t_n}|^M}\to\infty\ ,\  n\to\infty\ ,
\end{equation}
while 
\begin{equation}\label{inverse}
v(\underline{t_n})\to v_0 \text{ in }C_+^\infty\ ,\  n\to\infty\ .
\end{equation}
Here, by considering the rational data in the case $\alpha\ne0$, we proved the existence of solutions with exponential growth in time rather than $\limsup$. 

There is another non dispersive example with norm growth by Oana Pocovnicu \cite{Pocovnicu2011DCDS}, who studied the cubic Szeg\H{o} equation on the line $\mathbb{R}$, and found there exist solutions with Sobolev norms growing polynomially in time as $|t|^{2s-1}$ with $s\geq1/2$. 

\noindent {\bf 3. }For the case $\alpha>0$, we now have solutions of \eqref{alszc2} with different growths, uniformly bounded, growing in fluctuations with a $\limsup$ super-polynomial in time growth, and exponential in time growth. Indeed, it is easy to show that $zu(t,z^2)$ is a solution to the $\alpha$--Szeg\H{o} equation if $u(t,z)$ solves the cubic Szeg\H{o} equation \eqref{szegocubic}. Thus, for the cubic Szeg\H{o} equation with a linear perturbation \eqref{alszc2}, there also exist solutions with such an energy cascade as in \eqref{forward} and $\eqref{inverse}$. 

\noindent {\bf 4. }In this paper, we consider data in $\mathcal{L}(N)$ for any arbitrary $N\in\mathbb{N}^+$. The data we find which lead to a large time norm explosion are very special. An interesting observation is that the equations on $u'_k$ and $v'_k$ look similar to the original $\alpha$--Szeg\H{o} equation, 
\begin{equation}
\label{pksystem}
\frac{\partial}{\partial t}\begin{pmatrix}
u'_k \\
v'_k
\end{pmatrix}
=-i
\begin{pmatrix}
 T_{\vert u\vert^2} & \alpha(u\vert 1)\\
-(1\vert u) & T_{\vert u\vert^2}-\sigma_k^2
\end{pmatrix}
\quad
\begin{pmatrix}
u'_k \\
v'_k
\end{pmatrix}\ ,
\end{equation}
which gives us some hope to extend our results to general rational data.

\end{remark}

\subsection{Organization of this chapter}
In section 2, we recall the results about the singular values of $H_u$ and $K_u$ \cite{GGHankel}. In section 3, we introduce the conservation laws and prove the integrability. In section 4, we study the motion of the singular values of the Hankel operators $H_u$ and $K_u$, the eigenvalues of $H_u$ move and may touch some eigenvalue of $K_u$ at finite time while the eigenvalues of $K_u$ stay fixed with the corresponding Blaschke products stay in the same orbits. In section 5, we present a necessary condition of the norm explosion, and as a direct consequence, we know that for $\alpha<0$, the trajectories of the solutions stay in a compact subset. In section 6, we study the norm explosion with $\alpha>0$ for data in $\mathcal{L}(N)$ with any $N\in\mathbb{N}^+$. We present some open problems in the last section.

\section{Spectral analysis of the operators $H_u$ and $K_u$}
In this section, let us introduce some notation which will be used frequently and some useful results by G\'erard and Grellier in their recent work \cite{GGHankel}. We consider $u\in H^s_+(\mathbb{S}^1)$ with $s>\frac12$. The Hankel operator $H_u$ is compact by the theorem due to Hartman \cite{Ha}. Let us introduce the spectral analysis of operators $H_u^2$ and $K_u^2$. For any $\tau\geq0$, we set
\begin{equation}
 E_u(\tau):=\ker (H_u^2-\tau^2\mathrm{I}),\ F_u(\tau):=\ker (K_u^2-\tau^2\mathrm{I})\ .
\end{equation}

If $\tau>0$, the $E_u(\tau)$ and $F_u(\tau)$ are finite dimensional with the following properties.
\begin{proposition}\label{rigidityc2}\cite{GGHankel}
Let $u\in H^s_+(\mathbb{S}^1 )\setminus \{ 0\} $ with $s>1/2$, and $\tau>0$ such that
\[E_u(\tau)\neq \{ 0\} \quad or\quad F_u(\tau)\neq \{ 0\}\ .\]Then one of the following properties holds.
\begin{enumerate}
\item $\dim E_u(\tau)=\dim F_u(\tau)+1$,  $u \not \perp E_u(\tau)$, and $F_u(\tau)=E_u(\tau)\cap u^\perp $.
\item $\dim F_u(\tau)=\dim E_u(\tau)+1$,  $u \not \perp F_u(\tau)$, and $E_u(\tau)=F_u(\tau)\cap u^\perp $.
\end{enumerate}
Moreover, if $u_\rho$ and $u'_\sigma$ denote respectively the orthogonal projections of $u$ onto $E_u(\rho)$, $\rho\in \Sigma_H(u)$, and onto $F_u(\sigma)$, $\sigma\in \Sigma_K(u)$ with
$$\Sigma_H(u):=\{\tau>0:\ u\not\perp E_u(\tau)\},\quad\Sigma_K(u):=\{\tau\geq0:\ u\not\perp F_u(\tau)\}\ .$$
Then
\begin{enumerate}
\item $\Sigma_H(u)$ and $\Sigma_K(u)$ are disjoint, with the same cardinality;
\item if $\rho\in \Sigma_H(u)$, 
\begin{equation}
u_\rho=\|u_\rho\|^2\sum_{\sigma\in \Sigma_K(u)}\frac{u'_\sigma}{\rho^2-\sigma^2}\ ,
\end{equation}
 \item if $\sigma\in \Sigma_K(u)$,
 \begin{equation}
 u'_\sigma=\|u'_\sigma\|^2 \sum_{\rho\in \Sigma_H(u)}\frac{u_\rho}{\rho^2-\sigma^2}\ .
 \end{equation}
\item A non negative number $\sigma$ belongs to $\Sigma _K(u)$ if and only if it does not belong to $\Sigma _H(u)$ and
\begin{equation}
\sum_{\rho \in \Sigma _H(u)}\frac{\|u_\rho \|^2}{\rho ^2-\sigma^2}=1\ .
\end{equation}
\end{enumerate}
\end{proposition}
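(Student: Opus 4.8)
The backbone of the argument is the rank-one identity \eqref{HKc2}, which I would rewrite as $K_u^2 = H_u^2 - (\,\cdot\,|u)u$, exhibiting $K_u^2$ as a rank-one perturbation of the positive, self-adjoint, trace-class operator $H_u^2$. The elementary half comes first: if $(h|u)=0$ then $K_u^2 h = H_u^2 h$, so $E_u(\tau)\cap u^\perp = F_u(\tau)\cap u^\perp =: W$ for every $\tau>0$; and since $u$ is orthogonal to the part of either eigenspace lying in $u^\perp$, each of $E_u(\tau),F_u(\tau)$ equals $W$ together with at most one extra dimension, namely the component of $u$. To control that extra dimension I would introduce the Cauchy transform (generating function) $\psi(\lambda):=((\lambda\mathrm{I} - H_u^2)^{-1}u\,|\,u)=\sum_{\rho\in\Sigma_H(u)}\frac{\|u_\rho\|^2}{\lambda-\rho^2}$ and the Sherman--Morrison resolvent formula, which upon application to $u$ collapses to the scalar relation $(\lambda\mathrm{I} - K_u^2)^{-1}u=\frac{1}{1+\psi(\lambda)}(\lambda\mathrm{I} - H_u^2)^{-1}u$.

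Next I would read off the dichotomy by a residue computation at a point $\tau^2$. If $u\not\perp E_u(\tau)$, i.e. $u_\tau\neq0$, then $\psi$ has a simple pole at $\tau^2$, so $1/(1+\psi)$ vanishes there; expanding the full-resolvent Sherman--Morrison identity shows that the residue of $(\lambda\mathrm{I} - K_u^2)^{-1}$ is the orthogonal projection onto $E_u(\tau)\ominus\mathbb{C}u_\tau=W$, which is exactly alternative (1): $\dim E_u(\tau)=\dim F_u(\tau)+1$ and $F_u(\tau)=E_u(\tau)\cap u^\perp$. If instead $u\perp E_u(\tau)$, the pole of the ambient resolvent is invisible to $u$ and $\psi$ is regular at $\tau^2$; then either $1+\psi(\tau^2)=0$, a simple zero whose residue adjoins the one-dimensional space $\mathbb{C}\,(\tau^2\mathrm{I} - H_u^2)^{-1}u$ and yields alternative (2), or $1+\psi(\tau^2)\neq0$, in which case the residue returns $F_u(\tau)=E_u(\tau)$ --- a degenerate situation I rule out in the next step.

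The step I expect to be the real obstacle, and the only place where one genuinely uses that these are Hankel operators rather than abstract rank-one perturbations, is excluding that degenerate possibility $u\perp E_u(\tau)$ \emph{and} $u\perp F_u(\tau)$, i.e. $E_u(\tau)=F_u(\tau)=W$ with $1+\psi(\tau^2)\neq0$. Here I would argue with the shift. From $K_u=S^*H_u$ and $SS^*=\mathrm{I}-(\,\cdot\,|1)1$ one gets $SK_u h=H_u h-(u|h)1$, so on $W$, where $(u|h)=0$ and $E_u(\tau)=W$, one finds $H_u(W)\subseteq E_u(\tau)=W$ and hence $S(W)=H_u(W)\subseteq W$. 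A nonzero finite-dimensional subspace invariant under the shift is impossible, since $S|_W$ would then possess an eigenvector while $S$ has empty point spectrum. Thus $W=\{0\}$, contradicting $E_u(\tau)\neq\{0\}$ or $F_u(\tau)\neq\{0\}$; this excludes the degenerate case and forces exactly one of (1), (2) at every $\tau>0$.

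Finally, the global ``moreover'' statements follow from the analytic behaviour of $\psi$. Since $\psi$ is strictly decreasing on each interval between consecutive poles, tends to $0$ at $\pm\infty$, and blows up with opposite signs across each pole, the equation $1+\psi(\lambda)=0$ has exactly one root in each gap between consecutive elements of $\Sigma_H(u)$ and exactly one below the least of them, none coinciding with a pole. This simultaneously gives that $\Sigma_H(u)$ and $\Sigma_K(u)$ are disjoint, that they have equal cardinality, and statement (4), namely $\sigma\in\Sigma_K(u)\iff\psi(\sigma^2)=-1\iff\sum_{\rho}\frac{\|u_\rho\|^2}{\rho^2-\sigma^2}=1$. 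For the expansions I would take the residue of $(\lambda\mathrm{I} - K_u^2)^{-1}u=\frac{1}{1+\psi(\lambda)}\sum_\rho\frac{u_\rho}{\lambda-\rho^2}$ at $\lambda=\sigma^2$, obtaining $u'_\sigma$ proportional to $\sum_\rho\frac{u_\rho}{\rho^2-\sigma^2}$; pairing with $u$ and invoking (4) identifies the proportionality constant as $\|u'_\sigma\|^2$, which is statement (3), and the symmetric computation with the roles of $H_u^2$ and $K_u^2$ interchanged (using its own secular equation $\sum_\sigma\frac{\|u'_\sigma\|^2}{\rho^2-\sigma^2}=1$) gives statement (2).
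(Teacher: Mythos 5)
First, a point of comparison: the paper itself gives no proof of Proposition~\ref{rigidityc2} --- it is quoted from \cite{GGHankel} --- so your argument has to be judged on its own merits. It is worth noting that your central identity, the Sherman--Morrison collapse $(\lambda\mathrm{I}-K_u^2)^{-1}u=\frac{1}{1+\psi(\lambda)}(\lambda\mathrm{I}-H_u^2)^{-1}u$, is precisely the identity \eqref{KH}, \eqref{JF1} that the paper derives in Lemma~\ref{JFc2} (set $x=1/\lambda$, so that $J_{1/\lambda}=1+\psi(\lambda)$), so your toolkit is the natural one here. The core of your proof is correct: the residue analysis at an isolated spectral point $\tau^2>0$ does yield the trichotomy (alternative (1) when $u\not\perp E_u(\tau)$; alternative (2) when $u\perp E_u(\tau)$ and $\psi(\tau^2)=-1$; $E_u(\tau)=F_u(\tau)$ otherwise), and your shift argument excluding the third case is sound and is indeed the genuinely Hankel-specific step: from $SK_uh=H_uh-(u|h)1$ you get $S(W)=H_u(W)\subseteq W$ --- note you silently use $K_u(W)=W$, which holds because $K_u^2=\tau^2\mathrm{I}$ on $W$ with $\tau>0$ and $\dim W<\infty$ --- and a nonzero finite-dimensional $S$-invariant subspace would give the shift an eigenvector. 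The residue derivations of statements (2), (3), (4) at strictly positive $\rho,\sigma$ are also fine.

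The genuine gap is at $\sigma=0$, and it propagates into the cardinality claim. The proposition is not only about positive singular values: $\Sigma_K(u)$ is defined with $\tau\geq0$, statement (4) explicitly concerns nonnegative $\sigma$, and $0\in\Sigma_K(u)$ really occurs (take $u=z$: then $u\in\ker K_u$, $\Sigma_H=\{1\}$, $\Sigma_K=\{0\}$). Your residue machinery cannot reach this case: $0$ need not be an isolated point of the spectrum of $K_u^2$, and even when it is, membership $0\in\Sigma_K(u)$ must be linked to the secular function by a separate argument. Indeed $\ker K_u=\{h:\ H_uh\in\mathbb{C}\cdot1\}$, so $u\not\perp\ker K_u$ is equivalent to $1\in\mathrm{Ran}\,H_u$, whereas $\psi(0)=-1$ only expresses $1\in\overline{\mathrm{Ran}\,H_u}$; for rational $u$ the range is closed and the two coincide, but this identification is an extra step your proof never makes. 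Relatedly, your count of ``exactly one root of $1+\psi=0$ below the least pole'' tacitly assumes that this root lies at $\lambda\geq0$: this needs $\psi(0)\geq-1$, i.e. $\sum_\rho\|u_\rho\|^2/\rho^2\leq1$, which follows from the identity $\|u_\rho\|^2=\rho^2\|P_{E_u(\rho)}1\|^2$ (a consequence of $u_\rho=H_u\bigl(P_{E_u(\rho)}1\bigr)$) together with Bessel's inequality --- an identity you never establish. Without it, the bottom root could a priori be negative, and the ``same cardinality'' claim in (1) would come up one element short when $\Sigma_H(u)$ is finite; when the bottom root sits exactly at $\lambda=0$, you are back to the untreated case $\sigma=0$. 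In summary: the dichotomy and all assertions at positive singular values are proved; statements (1), (3) and (4), in the form stated (with $\sigma\geq0$), are not.
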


By the spectral theorem for $H_u^2$ and $K_u^2$, which are self-adjoint and compact, we have the following orthogonal decomposition
\begin{equation}
L_+^2=\overline{\oplus_{\tau>0}E_u(\tau)}=\overline{\oplus_{\tau\ge0}F_u(\tau)}\ .
\end{equation}
Then we can write $u$ as
\begin{equation}
u=\sum\limits_{\rho\in\Sigma_H(u)}u_\rho=\sum\limits_{\sigma\in\Sigma_K(u)}u'_\sigma\ .
\end{equation}

In fact, we are able to describe these two sets $E_u(\tau)$ and $F_u(\tau)$ more explicitly. Recall that a finite Blaschke product of degree $k$ is a rational function of the form
$$\Psi (z)={\mathrm{e}}^{-i\psi}\frac{P(z)}{D(z)}\ ,$$
where $\psi \in \mathbb{S}^1 $ is called the angle of $\Psi$ and $P$ is a monic polynomial of degree $k$ with all its roots in $\mathbb D $, $D(z)=z^k\overline P\left (\frac 1z\right )$ as the normalized denominator of $\Psi$. Here a monic polynomial is a univariate polynomial in which the leading coefficient (the nonzero coefficient of highest degree) is equal to $1$. We denote by $\mathcal{B}_k$ the set of all the Blaschke functions of degree $k$.

\begin{proposition}\label{actionc2}\cite{GGHankel}
Let $\tau>0$ and $u\in H^s_+(\mathbb{S}^1 )$ with $s>\frac12$.
\begin{enumerate}
\item Assume $\tau\in \Sigma_H(u)$ and $\ell:=\dim E_u(\tau)=\dim F_u(\tau)+1$. Denote by $u_\tau$ the orthogonal projection of $u$ onto $E_u(\tau)$. There exists a Blaschke function $\Psi _\tau\in \mathcal B_{\ell-1}$ 
such that
$$\tau u_\tau=\Psi _\tau H_u(u_\tau)\ ,$$
and if $D$ denotes the normalized denominator of $\Psi _\tau$,
\begin{eqnarray}
E_u(\tau)&=&\left \{ \frac f{D(z)} H_u(u_\tau)\ ,\ f\in \mathbb{C} _{\ell-1}[z]\right \} \ ,\\
F_u(\tau)&=&\left\{ \frac g{D(z)} H_u(u_\tau)\ ,\ g\in \mathbb{C} _{\ell-2}[z]\right \},
\end{eqnarray}
and for $a=0,\dots,\ell-1\ ,\ b=0,\dots ,\ell-2$,
\begin{eqnarray} 
H_u\left (\frac{z^a}{D(z)}H_u(u_\tau)\right )&=&\tau{\mathrm{e}}^{-i\psi _\tau}\frac{z^{\ell-a-1}}{D(z)}H_u(u_\tau)\ , \\
K_u\left (\frac{z^b}{D(z)}H_u(u_\tau)\right )&=&\tau{\mathrm{e}}^{-i\psi _\tau}\frac{z^{\ell-b-2}}{D(z)}H_u(u_\tau)\ ,
\end{eqnarray}
where $\psi _\tau$ denotes the angle of $\Psi _\tau$.
\item Assume $\tau\in \Sigma_K(u)$ and $m :=\dim F_u(\tau)=\dim E_u(\tau)+1$. Denote by $u_\tau'$ the orthogonal projection of $u$ onto $F_u(\tau)$. There exists an inner function $\Psi _\tau\in \mathcal B_{m -1}$ such that
$$K_u(u_\tau')=\tau\Psi _\tau u_\tau'\ ,$$
and if $D$ denotes the normalized denominator of $\Psi _\tau$,
\begin{eqnarray} 
F_u(\tau)&=&\left \{ \frac f{D(z)} u_\tau'\ ,\ f\in \mathbb{C} _{m -1}[z]\right \} \ ,\\
E_u(\tau)&=&\left \{ \frac {zg}{D(z)} u_\tau'\ ,\ g\in \mathbb{C} _{m -2}[z]\right \}\ ,
\end{eqnarray}
and, for $a=0,\dots,m -1\ ,\ b=0,\dots ,m-2$,
\begin{eqnarray} 
K_u\left (\frac {z^a}{D(z)}u_\tau'\right )&=&\tau{\mathrm{e}}^{-i\psi _\tau}\frac {z^{m -a-1}}{D(z)}u_\tau'\ , \\
H_u\left (\frac {z^{b+1}}{D(z)}u_\tau'\right )&=&\tau{\mathrm{e}}^{-i\psi _\tau}\frac {z^{m-b-1}}{D(z)}u_\tau'\ ,
\end{eqnarray}
where $\psi _\tau$ denotes the angle of $\Psi _\tau$.
\end{enumerate}
\end{proposition}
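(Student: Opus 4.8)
The plan is to establish the two cases in parallel, exploiting the rank-one link $K_u^2=H_u^2-(\cdot\,|\,u)u$ from \eqref{HKc2} together with the intertwining relations $K_u=S^*H_u=H_uS$ (the second being the Hankel property) and its companion $H_u(h)=SK_u(h)+(u\,|\,h)\,1$. Since $H_u$ is antilinear and commutes with $H_u^2$, it preserves each finite--dimensional eigenspace $E_u(\tau)$, and on it $\tfrac1\tau H_u$ is an antilinear involution; the analogous statement holds for $\tfrac1\tau K_u$ on $F_u(\tau)$. I would treat case (1) in detail, case (2) being entirely symmetric after exchanging the roles of $(H_u,E_u)$ and $(K_u,F_u)$ and reading the same rank-one identity backwards.

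The heart of the matter is the inner factor $\Psi_\tau$ with $\tau u_\tau=\Psi_\tau H_u(u_\tau)$. Writing $w:=H_u(u_\tau)=\Pi(u\overline{u_\tau})$ and noting $\Pi(u\overline w)=H_u(w)=H_u^2(u_\tau)=\tau^2u_\tau$, I would split off the anti-holomorphic remainders, $u\overline{u_\tau}=w+\overline G$ and $u\overline w=\tau^2u_\tau+\overline{G'}$, where $G,G'$ carry only strictly positive Fourier frequencies. Conjugating the first identity and multiplying by $w$, and conjugating the second and multiplying by $u_\tau$, both products equal $\bar u\,u_\tau w$, whence
\[
|w|^2-\tau^2|u_\tau|^2=u_\tau G'-wG .
\]
The left-hand side is real while the right-hand side has only frequencies $\ge 1$, and a real distribution supported in strictly positive frequencies vanishes; hence $\tau|u_\tau|=|H_u(u_\tau)|$ pointwise on $\mathbb{S}^1$. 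Thus $\Psi_\tau:=\tau u_\tau/H_u(u_\tau)$ is unimodular on the circle, and since $u_\tau$ and $H_u(u_\tau)$ are rational and holomorphic in $\mathbb{D}$ (the eigenspace being finite--dimensional), $\Psi_\tau$ is a finite Blaschke product; I would check that the common inner factors of $u_\tau$ and $H_u(u_\tau)$ cancel so that $\Psi_\tau$ has no pole in $\overline{\mathbb D}$, the degree $\ell-1$ being forced by the dimension count below.

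For the explicit description it then suffices, by dimension, to produce $\ell$ independent elements of $E_u(\tau)$ of the claimed form. I would prove the single action formula $H_u\!\left(\tfrac{z^a}{D}w\right)=\tau e^{-i\psi_\tau}\tfrac{z^{\ell-1-a}}{D}w$ directly, using the boundary symmetry $\overline D=z^{-(\ell-1)}P$ of the normalized denominator, the relation $\tau u_\tau=e^{-i\psi_\tau}\tfrac PD w$, and $\Pi(u\overline w)=\tau^2u_\tau$: the Szeg\H{o} projection of $u\,\overline{z^aw/D}$ collapses to the stated monomial after discarding a remainder that $\Pi$ annihilates for frequency reasons. Applying $H_u$ twice and using antilinearity shows each $\tfrac{z^a}{D}w$ lies in $E_u(\tau)$, and these $\ell$ functions are independent, hence a basis. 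The description of $F_u(\tau)$ and the $K_u$--action follow from $K_u=S^*H_u$, which drops the exponent by one, together with $F_u(\tau)=E_u(\tau)\cap u^\perp$ from Proposition~\ref{rigidityc2}; case (2) is obtained verbatim after swapping $H_u\leftrightarrow K_u$, $E_u\leftrightarrow F_u$ and invoking the companion relation $H_u(h)=SK_u(h)+(u\,|\,h)\,1$.

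The main obstacle is the bookkeeping in the action formulas: one must track exactly which frequencies survive $\Pi$ so that $D$ and its boundary symmetry produce precisely the degree shift $a\mapsto \ell-1-a$ and so that $\dim E_u(\tau)=\ell$, $\dim F_u(\tau)=\ell-1$ come out consistently. The modulus identity $\tau|u_\tau|=|H_u(u_\tau)|$ is clean, but upgrading unimodularity to an honest Blaschke product of the correct degree, with no spurious poles in the disc, is the delicate point that ties the whole argument together.
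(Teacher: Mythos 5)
A preliminary remark: this paper never proves Proposition~\ref{actionc2} --- it is imported verbatim from G\'erard--Grellier \cite{GGHankel} --- so your attempt can only be measured against that reference, where the proof takes several pages. Within your proposal, the opening step is correct and is indeed the standard one: writing $u\overline{u_\tau}=H_u(u_\tau)+\overline G$ and $u\overline{H_u(u_\tau)}=\tau^2u_\tau+\overline{G'}$ with $G,G'$ supported in frequencies $\ge 1$, the quantity $|H_u(u_\tau)|^2-\tau^2|u_\tau|^2=G'u_\tau-G\,H_u(u_\tau)$ is real and supported in frequencies $\ge1$, hence vanishes. Also, \emph{granting} that $\Psi_\tau$ is a Blaschke product of degree exactly $\ell-1$, your frequency bookkeeping for the action formulas does work: on $\mathbb{S}^1$ one has $\overline D=z^{-(\ell-1)}P$, the function $1/P$ carries only frequencies $\le -\deg P$ because $P$ has all roots in $\mathbb D$, so the remainder $\overline{G'}\,z^{\ell-1-a}/P$ is annihilated by $\Pi$; the statements for $F_u(\tau)$ then follow from $K_u=S^*H_u$ and $F_u(\tau)=E_u(\tau)\cap u^\perp$.

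The genuine gap is exactly the step you flag as delicate, and your route through it rests on a false premise. You assert that $u_\tau$ and $H_u(u_\tau)$ are rational ``the eigenspace being finite-dimensional''. This is not true: the proposition concerns arbitrary $u\in H^s_+$, $s>\frac12$, for which $H_u$ is compact but generically of infinite rank, and eigenvectors of $H_u^2$ are then not rational --- the whole content of the statement is that the \emph{ratio} $\tau u_\tau/H_u(u_\tau)$ is rational although neither factor is. Unimodularity on the circle alone cannot rescue this: in inner--outer factorizations the outer parts agree up to a unimodular constant, but the quotient of the inner parts may have poles in $\mathbb D$ (denominator Blaschke factors not dividing the numerator) or a singular inner factor and its reciprocal, none of which your argument excludes. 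Closing this requires substantially more structure, e.g.\ the pointwise identity $\overline{H_u(h)}\,H_u(h')=\tau^2\,h\,\overline{h'}$ valid for all $h,h'\in E_u(\tau)$ combined with finite-dimensionality; this is the bulk of the proof in \cite{GGHankel}. A second, related gap: your degree count is circular. Your construction produces only $\deg\Psi_\tau+1$ independent eigenvectors, hence only the inequality $\deg\Psi_\tau\le\ell-1$; to get $\deg\Psi_\tau=\ell-1$ and the stated \emph{equality} $E_u(\tau)=\bigl\{fD^{-1}H_u(u_\tau)\bigr\}$ rather than a mere inclusion, one must prove that every $h\in E_u(\tau)$ is of this form, a reverse inclusion your proposal never addresses. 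Finally, case (2) is not obtained ``verbatim'': the companion identity $H_u(h)=SK_u(h)+(u|h)1$ carries a rank-one defect which breaks the symmetry --- this is precisely why the two statements differ (note the extra factor $z$ in $E_u(\tau)=\bigl\{zgD^{-1}u_\tau'\bigr\}$) --- so it needs its own treatment.
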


We call the elements $\rho_j\in\Sigma_H(u)$ and $\sigma_k\in\Sigma_K(u)$ as the dominant eigenvalues of $H_u$ and $K_u$ respectively. Due to the above achievements, they are in a finite or infinite sequence
\[\rho_1>\sigma_1>\rho_2>\sigma_2>\cdots\to0\ ,\]
we denote by  $\ell_j$ and $m_k$ as the multiplicities of $\rho_j$ and $\sigma_k$ respectively. In other words,
\begin{eqnarray*}
\dim E_u(\rho_j)=\ell_j\ ,\\
\dim F_u(\sigma_k)=m_k\ .
\end{eqnarray*}
Therefore, we may define the dominant ranks of the operators as 
\begin{align*}
\mathrm{rk_d}(H_u):=\sum\limits_j \ell_j\ ,\\
\mathrm{rk_d}(K_u):=\sum\limits_k m_k\ ,
\end{align*}
while the ranks of the operators are
\begin{align*}
\mathrm{rk}(H_u)=\sum\limits_j \ell_j+\sum\limits_k (m_k-1)\ ,\\
\mathrm{rk}(K_u)=\sum\limits_j (\ell_j-1)+\sum\limits_k m_k\ .
\end{align*}

In this paper, $u_j$ and $u'_k$ denote the orthogonal projections of $u$ onto $E_u(\rho_j)$ and $F_u(\sigma_k)$ respectively, while $v_j$ and $v'_k$ denote the orthogonal projections of $1$ onto $E_u(\rho_j)$ and $F_u(\sigma_k)$. The $L^2$--norms of $u_j$ and $u'_k$ can be represented in terms of $\rho_\ell$'s and $\sigma_\ell$'s, which was already observed in \cite{GG2014Hankel}.
\begin{lemma}
Let $u\in H^{\frac12}(\mathbb{S}^1)$, $\Sigma_H(u)=\{\rho_j\}$ and $\Sigma_K(u)=\{\sigma_k\}$ with
$$\rho_1>\sigma_1>\rho_2>\cdots\ge0\ .$$
Then
\begin{align*}
\Vert u_j\Vert^2=\frac{\prod\limits_{\ell}(\rho_j^2-\sigma_\ell^2)}{\prod\limits_{\ell\neq j}(\rho_j^2-\rho_\ell^2)}\ ,\ \Vert u'_k \Vert ^2=\frac{\prod\limits_\ell (\rho _\ell^2-\sigma_k ^2)}{\prod\limits_{\ell\neq k} (\sigma_\ell ^2-\sigma_k^2)}\ .
\end{align*}
\end{lemma}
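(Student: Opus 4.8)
The plan is to reduce everything to an identity between rational functions and then read off the norms as residues. The key input is the last item of Proposition~\ref{rigidityc2}, which says that the function
\[
\Phi(x):=\sum_{\rho\in\Sigma_H(u)}\frac{\|u_\rho\|^2}{\rho^2-x}
\]
takes the value $1$ at each point $x=\sigma_k^2$ with $\sigma_k\in\Sigma_K(u)$. I would work first in the finite-rank situation relevant to the paper, namely $u\in\mathcal{L}(N)$, where $\Sigma_H(u)=\{\rho_1,\dots,\rho_N\}$ and $\Sigma_K(u)=\{\sigma_1,\dots,\sigma_N\}$ are finite of equal cardinality by Proposition~\ref{rigidityc2}, so that $\Phi$ is a genuine rational function.

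First I would pin down $\Phi$ completely. It is a rational function whose only singularities are simple poles at the $N$ points $x=\rho_j^2$, which vanishes at infinity, and which equals $1$ at the $N$ points $\sigma_k^2$. Hence $\Phi(x)-1$ has denominator $\prod_j(x-\rho_j^2)$ and a numerator of degree exactly $N$ (forced by $\Phi(\infty)=0$) vanishing at the $N$ roots $\sigma_k^2$; comparing leading coefficients yields
\[
\Phi(x)=1-\frac{\prod_{\ell}(x-\sigma_\ell^2)}{\prod_{j}(x-\rho_j^2)}\ .
\]
Computing the residue of $\Phi$ at the simple pole $x=\rho_j^2$ in two ways — from the defining sum it is $-\|u_j\|^2$, and from the closed form it is $-\prod_\ell(\rho_j^2-\sigma_\ell^2)/\prod_{\ell\neq j}(\rho_j^2-\rho_\ell^2)$ — gives the first formula for $\|u_j\|^2$.

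For the second formula I would use the explicit relation $u'_\sigma=\|u'_\sigma\|^2\sum_\rho \frac{u_\rho}{\rho^2-\sigma^2}$ from Proposition~\ref{rigidityc2}. Since the $u_\rho$ lie in distinct eigenspaces of the self-adjoint operator $H_u^2$, they are pairwise orthogonal, and the coefficients $1/(\rho^2-\sigma^2)$ are real; taking $L^2$-norms therefore gives $1=\|u'_\sigma\|^2\sum_\rho \frac{\|u_\rho\|^2}{(\rho^2-\sigma^2)^2}=\|u'_\sigma\|^2\,\Phi'(\sigma^2)$. Differentiating the closed form for $\Phi$ and evaluating at $\sigma_k^2$ then yields $\|u'_k\|^2=1/\Phi'(\sigma_k^2)=\prod_\ell(\rho_\ell^2-\sigma_k^2)/\prod_{\ell\neq k}(\sigma_\ell^2-\sigma_k^2)$, after a routine sign count over the $N$ and $N-1$ factors.

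The only genuinely delicate point is leaving the rational regime: for general $u\in H^{\frac12}$ the sets $\Sigma_H(u)$ and $\Sigma_K(u)$ may be infinite, so $\Phi$ is no longer rational and the products become infinite. The main obstacle is thus justifying the same factorization in that case — the eigenvalues accumulate only at $0$ and $H_u$ is compact, so the products converge, but one must establish the identification of $\Phi$ carefully (e.g.\ by a meromorphic/Hadamard factorization, or by truncating to the first $N$ dominant eigenvalues and passing to the limit). Since the applications in this paper only require $u\in\mathcal{L}(N)$, the finite-rank argument above suffices, and the general case follows by approximation.
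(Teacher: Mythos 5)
Your argument is correct in the finite-rank setting, but it follows a genuinely different route from the paper's. The paper quotes the generating-function identity $\big((I-xH_u^2)^{-1}1\,\vert\,1\big)=\prod_\ell\frac{1-x\sigma_\ell^2}{1-x\rho_\ell^2}$ from the G\'erard--Grellier work, expands the left-hand side spectrally using $\Vert v_j\Vert^2=\Vert u_j\Vert^2/\rho_j^2$ (obtained from Proposition~\ref{actionc2}), and reads off $\Vert u_j\Vert^2$ by identifying residues at $x=1/\rho_j^2$; for $\Vert u'_k\Vert^2$ it then invokes $1-x\big((I-xK_u^2)^{-1}u\,\vert\,u\big)=\big((I-xH_u^2)^{-1}1\,\vert\,1\big)^{-1}$ --- which is \eqref{relationJF} of Lemma~\ref{JFc2}, proved only later in Section 3 --- and identifies residues at $x=1/\sigma_k^2$. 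You instead take Proposition~\ref{rigidityc2} as the sole input: item (4) supplies the interpolation conditions $\Phi(\sigma_k^2)=1$ that pin down the rational function $\Phi$, so you re-derive the product formula rather than quote it (your closed form for $\Phi$ is equivalent to it, since $\Phi(x)=1-J_{1/x}$), and item (3) together with orthogonality of the $u_\rho$ gives $\Vert u'_k\Vert^2=1/\Phi'(\sigma_k^2)$, bypassing the $J$--$F$ relation entirely; both residue computations and the sign count check out. What your route buys is self-containedness: everything reduces to Proposition~\ref{rigidityc2} plus elementary rational-function algebra, with no forward reference and no external identity. What the paper's route buys is generality: the quoted identities hold for every $u\in H^{\frac12}_+$, infinite rank included, whereas your interpolation argument requires $\Sigma_H(u)$ finite, and your closing claim that the general case ``follows by approximation'' is the one unproved step --- controlling the infinite products and the limit of the interpolation argument (or running a Hadamard-type factorization) takes real work. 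Since the lemma is stated for all $u\in H^{\frac12}$, strictly speaking your proof covers only the finite-rank case, which is indeed all that the paper's applications use.
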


\begin{proof}
First, we have
\begin{equation*}
\big((I-xH_u^2)^{-1}1\ |\ 1\big)=\prod\limits_\ell\frac{1-x\sigma_\ell^2}{1-x\rho_\ell^2}\ .
\end{equation*}
In fact, we can rewrite the left hand side as
\begin{equation*}
\big((I-xH_u^2)^{-1}1\ |\ 1\big)=\sum\limits_\ell \frac{\Vert v_\ell\Vert^2}{1-x\rho_\ell^2}+1-\sum_\ell\Vert v_\ell\Vert^2\ .
\end{equation*}
From Proposition~\ref{actionc2},
\begin{equation*}
v_j=\Big(1,\frac{H_u(u_j)}{\Vert H_u(u_j)\Vert}\Big)\frac{H_u(u_j)}{\Vert H_u(u_j)\Vert}\ ,
\end{equation*}
combined with $\Psi_jH_u(u_j)=\rho_ju_j$,
we get
\begin{equation*}
\Vert v_j\Vert^2=\frac{|(1,H_u(u_j))|^2}{\Vert H_u(u_j)\Vert^2}=\frac{|(H_u(1),u_j)|^2}{\rho_j^2\Vert u_j\Vert^2}=\frac{\Vert u_j\Vert^2}{\rho_j^2}\ .
\end{equation*}
Thus
\begin{equation*}
\prod\limits_\ell\frac{1-x\sigma_\ell^2}{1-x\rho_\ell^2}=\sum\limits_\ell \frac{\|u_\ell\|^2}{\rho_\ell^2(1-x\rho_\ell^2)}+1-\sum\frac{\|u_\ell\|^2}{\rho_\ell^2}\ .
\end{equation*}
We get, identifying the residues at $x=1/\rho_j^2$,
\begin{equation}
\label{normujc2}\|u_j\|^2=\frac{\prod\limits_{\ell}(\rho_j^2-\sigma_\ell^2)}{\prod\limits_{\ell\ne j}(\rho_j^2-\rho_\ell^2)}\ .
\end{equation}
On the other hand, since
\begin{equation*}
1-x((I-xK_u^2)^{-1}u\ |\ u)=\frac{1}{((I-xH_u^2)^{-1}1\ |\ 1)}\ ,
\end{equation*}
then
\begin{equation*}
1-x\Big(\sum\limits_k\frac{\|u'_k\|^2}{1-x\sigma_k^2}+\|u\|^2-\sum_k\|u'_k\|^2\Big)=\prod\limits_\ell\frac{1-x\rho_\ell^2}{ 1-x\sigma_\ell^2}\ ,
\end{equation*}
we get, identifying the residues at $x=1/\sigma_k^2$,
\begin{equation}
\label{normukc2}\| u'_k \| ^2=\frac{\prod\limits_\ell (\rho _\ell^2-\sigma_k ^2)}{\prod\limits_{\ell\ne k} (\sigma_\ell ^2-\sigma_k^2)}\ .
\end{equation}
\end{proof}
\section{Conservation laws and the $\alpha$--Szeg\H{o} hierarchy}
We endow $L^2_+(\mathbb{S}^1)$ with the symplectic form
$$\omega(u,v)=4\mathrm{Im}(u\ |\ v)\ .$$
Then \eqref{alszc2} can be rewritten as
\begin{equation}
\partial_t u=X_{E_\alpha}(u)\ ,
\end{equation}
with $X_{E_\alpha}$ as the Hamiltonian vector field associated to the Hamiltonian function given by
$$E_\alpha(u):=\frac{1}{4}\int_{\mathbb{S}^1}|u|^4\frac{d\theta}{2\pi}+\frac\alpha2|(u|1)|^2\ .$$
The invariance by translation and by multiplication by complex numbers of modulus $1$ gives two other formal conservation laws
\begin{align*}
&\text{mass:  }&Q(u):=\int_{\mathbb{S}^1}|u|^2 \frac{d\theta}{2\pi}=\|u\|_{L^2}^2\ ,\\
&\text{momentum:  }&M(u):=(Du|u),\ D:=-i\partial_\theta=z\partial_z\ .
\end{align*}
Moreover, the Lax pair structure leads to the conservation of the eigenvalues of $K_u^2$. So it is obvious the system is completely integrable for the data in the $3-$dimensional complex manifold $\mathcal{L}(1)$. Then what about the general case, for example in $\mathcal{L}(N)$ with arbitrary $N\in\mathbb{N}^+$? Fortunately, we are able to find many more conservation laws by its Lax pair structure \eqref{laxc2}. We will then show our system is still completely integrable with data in $\mathcal{L}(N)$ in the Liouville sense.
\subsection{Conservation laws}
Thanks to the Lax pair structure, we are able to find an infinite sequence of conservation laws.
\begin{theorem}\label{conservec2}
For every Borel function $f$ on $\R $, the following quantity 
$$L_f(u):=\Big(f(K_u^2)u\vert u\Big)-\alpha \Big(f(K_u^2)1\vert 1\Big)$$
is a conservation law.
\end{theorem}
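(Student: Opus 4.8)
The plan is to differentiate $L_f$ along the flow using the Lax equation \eqref{laxc2}, and to show that after the usual Szeg\H{o} cancellations only two terms proportional to $\alpha$ survive, one from $(f(K_u^2)u\,|\,u)$ and one from $-\alpha(f(K_u^2)1\,|\,1)$, which then cancel each other. As a first reduction, splitting a complex Borel $f$ into real and imaginary parts gives $L_f=L_{\mathrm{Re}f}+iL_{\mathrm{Im}f}$, so it suffices to treat real-valued $f$; then $A(t):=f(K_{u(t)}^2)$ is self-adjoint and commutes with $K_u$ and $K_u^2$. Since $C_u=\tfrac{i}{2}K_u^2-iT_{|u|^2}$ is anti-self-adjoint (both $K_u^2$ and $T_{|u|^2}$ are self-adjoint), the propagator $U(t)$ solving $\dot U=C_uU$, $U(0)=\mathrm{I}$, is unitary; by \eqref{laxc2} one has $K_{u(t)}=U(t)K_{u_0}U(t)^*$, hence $A(t)=U(t)f(K_{u_0}^2)U(t)^*$, which makes $\dot A=[C_u,A]$ rigorous for an arbitrary Borel $f$ without differentiating $f$ itself.

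Next I would record the differentiation identity $\frac{d}{dt}(Av\,|\,v)=(Ar_v\,|\,v)+(Av\,|\,r_v)$ with $r_v:=\dot v-C_uv$, valid for both a moving and a fixed vector $v$; it follows from $\dot A=[C_u,A]$ together with $C_u^*=-C_u$, which makes the $C_u$-contributions cancel in pairs. I apply it with $v=u$, where $r_u=-\tfrac{i}{2}K_u^2u-i\alpha(u|1)1$ (from \eqref{alszc2} and the definition of $C_u$), and with $v=1$, where $r_1=-C_u1$. Using $A=A^*$ and $[K_u^2,A]=0$, the contributions of the $-\tfrac{i}{2}K_u^2v$ pieces equal $\mathrm{Im}$ of real quantities and therefore drop out. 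With the convention that $(\cdot|\cdot)$ is linear in its first slot, I expect to be left with
$$\frac{d}{dt}L_f=2\alpha\Big(\mathrm{Im}\big[(u|1)(A1|u)\big]-\mathrm{Im}\big(A1\,|\,T_{|u|^2}1\big)\Big).$$

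Everything then reduces to the single identity
$$\mathrm{Im}\big[(u|1)(A1|u)\big]=\mathrm{Im}\big(A1\,|\,T_{|u|^2}1\big),$$
which is the only place the fine structure of $K_u$ enters and which I expect to be the main obstacle. Here I would use the algebraic relations $T_{|u|^2}1=\Pi(|u|^2)=H_uu$, the Hankel identity $H_uS=S^*H_u=K_u$, its consequence $K_uS=S^*K_u$, the formula $K_u1=S^*u$, and $SS^*=\mathrm{I}-(\cdot|1)1$. These yield $H_uu=SK_uu+\|u\|^2\,1$ (so the pairing with $A1$ changes only by the real quantity $\|u\|^2(A1|1)$) and $u=SK_u1+(u|1)1$. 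Feeding the latter into $K_uu$ and using the \emph{antilinearity} of $K_u$ (which conjugates the scalar $(u|1)$) gives $SK_uu=(\mathrm{I}-(\cdot|1)1)K_u^2 1+\overline{(u|1)}\,(\mathrm{I}-(\cdot|1)1)u$. Pairing with $A1$ and taking imaginary parts, the terms $(A1\,|\,K_u^2 1)$ and $(A1|1)$ are real because $A$ and $AK_u^2$ are self-adjoint, so they vanish, leaving exactly $\mathrm{Im}\big[(u|1)(A1|u)\big]$ and closing the argument. The delicate points to watch throughout are the antilinearity of $H_u$ and $K_u$ and the precise role of the rank-one correction $(\cdot|1)1$ in $SS^*$; the case $\alpha=0$ never requires this matching, which is exactly why the second term $-\alpha(f(K_u^2)1|1)$, with its specific coefficient, is forced upon us in the perturbed setting.
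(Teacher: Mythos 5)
Your proof is correct and follows essentially the same route as the paper: differentiate $L_f$ along the flow using $\frac{d}{dt}f(K_u^2)=[C_u,f(K_u^2)]$ (which you justify a bit more carefully via the unitary propagator), cancel the self-adjoint pieces, and reduce everything to matching the two terms proportional to $\alpha$. Your concluding shift-operator computation ($K_u1=S^*u$, $SS^*=\mathrm{I}-(\cdot\,|\,1)1$, etc.) is just a rederivation of the paper's key observation $(1|u)u=H_u^2(1)-K_u^2(1)=T_{|u|^2}(1)-K_u^2(1)$, which follows at once from \eqref{HKc2} applied to the vector $1$; apart from this cosmetic difference the two arguments coincide.
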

\begin{proof}
From the Lax pair identity
$$\frac{dK_u}{dt}=[C_u,K_u]\ ,\ C_u=-iT_{\vert u\vert ^2}+\frac{i}{2}K_u^2\ ,$$
we infer
$$\frac{d}{dt}K_u^2=[-iT_{\vert u\vert ^2},K_u^2]\ ,$$
and consequently, for every Borel function $f$ on $\R $,
$$\frac{d}{dt}f(K_u^2)=[-iT_{\vert u\vert ^2},f(K_u^2)]\ .$$
On the other hand, the equation reads
$$\frac{d}{dt} u=-iT_{\vert u\vert ^2}u-i\alpha (u\vert 1)\ .$$
Therefore we obtain
\begin{eqnarray*}
\frac{d}{dt}\Big(f(K_u^2)u\vert u\Big)&=&\Big([-iT_{\vert u\vert ^2},f(K_u^2)]u\vert u\Big)-i\Big(f(K_u^2)T_{\vert u\vert ^2}u\vert u\Big)+i\Big(u\vert f(K_u^2)T_{\vert u\vert ^2}u\Big)\\
&& -i\alpha (u\vert 1)\Big(f(K_u^2)(1)\vert u\Big)+i\alpha (1\vert u)\Big(f(K_u^2)(u)\vert 1\Big)\\
&=&-i\alpha \Big[\big(f(K_u^2)(1)\vert (1\vert u)u\big)-\big((1\vert u)u\vert f(K_u^2)(1)\big)\Big]\ .
\end{eqnarray*}
Now observe that
$$(1\vert u)u=H_u^2(1)-K_u^2(1)=T_{\vert u\vert ^2}(1)-K_u^2(1)\ .$$
We obtain
\begin{eqnarray*}
\frac{d}{dt}\Big(f(K_u^2)u\vert u\Big)&=&-i\alpha \Big[\big(f(K_u^2)(1)\vert T_{\vert u\vert ^2}(1)\big)-\big(T_{\vert u\vert ^2}(1)\vert f(K_u^2)(1)\big)\Big]\\
&=&\alpha \Big([-iT_{\vert u\vert ^2}, f(K_u^2)] (1)\vert 1\Big)\\
&=&\alpha \frac{d}{dt}\Big(f(K_u^2)(1)\vert 1\Big)\ .
\end{eqnarray*}
\end{proof}

\subsection{The $\alpha$--Szeg\H{o} hierarchy}
By the theorem above, for any $n\in\mathbb{N}$, $$L_n(u):=\big(K_u^{2n}(u)\ |\ u\big)-\alpha\big(K_u^{2n}(1)\ |\ 1\big)\ $$ is conserved. Then the manifold $\mathcal{L}(N)$ is of $2N+1-$ complex dimension and admits $2N+1$ conservation laws, which are 
$$\sigma_k,\ k=1,\cdots,N\text{ and } L_n(u),\ n=0,1,\cdots,N\ .$$
We are to show that all these conservation laws are in involve. Since the $\sigma_k$'s are constants, it is sufficient to show that all these $L_n$ satisfy the Poisson commutation relations
\begin{equation}\label{involvec2}
\{L_n,L_m\}=0\ .
\end{equation}

Let us begin with the following lemma which helps us better understand the conserved quantities. 
\begin{lemma}\label{JFc2}
Let $u\in H^{\frac12}(\mathbb{S}^1)$, $\Sigma_H(u)=\{\rho_j\}$ and $\Sigma_K(u)=\{\sigma_k\}$ with
$$\rho_1>\sigma_1>\rho_2>\cdots\ge0\ .$$
Denote 
\begin{eqnarray*}
J_x(u)&:=&\big((1-xH_u^2)^{-1}(1)\ |\ 1\big)\ ,\\
Z_x(u)&:=&\big(1\ |\ (1-xH_u^2)^{-1}(u)\big)\ ,\\
F_x(u)&:=&\big((1-xK_u^2)^{-1}(u)\ |\ u\big)\ ,\\
E_x(u)&:=&\big((1-xK_u^2)^{-1}(1)\ |\ 1\big)\ .
\end{eqnarray*}
Then
\begin{align}
F_x(u)&=\frac{J_x(u)-1}{xJ_x(u)}\label{relationJF}\ ,\\
E_x(u)&=J_x(u)-x\frac{\vert Z_x(u)\vert^2}{J_x(u)}\ .\label{relationEJZ}
\end{align}
\end{lemma}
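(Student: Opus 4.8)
The plan is to exploit \eqref{HKc2}, which exhibits $K_u^2 = H_u^2 - (\cdot\,|\,u)u$ as a rank-one perturbation of $H_u^2$, in order to pass between the two resolvents $R_H := (1-xH_u^2)^{-1}$ and $R_K := (1-xK_u^2)^{-1}$. I regard $x$ as a parameter for which both resolvents are defined (e.g. $x$ real and small, the identities then extending by analyticity), so that $R_H$ and $R_K$ are self-adjoint; I use repeatedly that $\|1\|^2 = 1$ and that $u = H_u(1)$, since $H_u(1) = \Pi(u) = u$.

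The heart of the argument is the single auxiliary identity
\[
\big((1-xH_u^2)^{-1}u\,|\,u\big) = \frac{J_x(u)-1}{x}\ .
\]
To prove it, I write $u = H_u(1)$ and use that $R_H$, being a function of $H_u^2$, commutes with $H_u$, together with the symmetry $(H_u a\,|\,b) = (H_u b\,|\,a)$, to reduce
\[
\big(R_H u\,|\,u\big) = \big(R_H H_u 1\,|\,H_u 1\big) = \big(H_u R_H 1\,|\,H_u 1\big) = \big(H_u^2 1\,|\,R_H 1\big)\ .
\]
Substituting $H_u^2 1 = x^{-1}\big(1 - (1-xH_u^2)1\big)$ and using $(1-xH_u^2)R_H 1 = 1$ collapses the right-hand side to $x^{-1}(J_x(u)-1)$. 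This is the one place where the antilinearity of $H_u$ must be tracked carefully, and it is essentially the only subtlety in the proof.

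Granting this identity, \eqref{relationJF} follows quickly. Setting $\phi := R_K u$ and applying $1 - xK_u^2 = (1-xH_u^2) + x(\cdot\,|\,u)u$ to $\phi$ gives $(1-xH_u^2)\phi = (1-xF_x(u))u$, whence $\phi = (1-xF_x(u))R_H u$. Pairing with $u$ yields $F_x(u) = (1-xF_x(u))\,(R_H u\,|\,u)$, and inserting the auxiliary identity and solving the resulting linear relation for $F_x(u)$ produces $F_x(u) = (J_x(u)-1)/(xJ_x(u))$.

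For \eqref{relationEJZ} I run the same computation starting from $\psi := R_K 1$; the rank-one identity now gives $\psi = R_H 1 - x(\psi\,|\,u)R_H u$. Pairing with $u$ and using both $(R_H 1\,|\,u) = Z_x(u)$ (self-adjointness of $R_H$) and $1 + x(R_H u\,|\,u) = J_x(u)$ (the auxiliary identity) gives $(\psi\,|\,u) = Z_x(u)/J_x(u)$. Finally pairing $\psi$ with $1$ and using $(R_H u\,|\,1) = \overline{Z_x(u)}$ yields $E_x(u) = J_x(u) - x|Z_x(u)|^2/J_x(u)$. The only care needed here is the complex-conjugation bookkeeping for $Z_x(u)$.
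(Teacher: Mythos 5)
Your proof is correct and follows essentially the same route as the paper: both arguments exploit the rank-one identity $K_u^2=H_u^2-(\cdot\,|\,u)u$ to relate the two resolvents, establish that $(1-xK_u^2)^{-1}u$ is collinear to $(1-xH_u^2)^{-1}u$ with factor $1/J_x$, and rest on the key identity $x\big((1-xH_u^2)^{-1}u\,|\,u\big)=J_x-1$ (which the paper obtains by a Neumann series expansion and you obtain by the commutation/antilinear-symmetry manipulation, a purely cosmetic difference). Your direct solution of the rank-one equations for $(1-xK_u^2)^{-1}u$ and $(1-xK_u^2)^{-1}1$ is just a streamlined version of the paper's computation with $w(f)=(1-xH_u^2)^{-1}f-(1-xK_u^2)^{-1}f$.
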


\begin{proof}
Recall \eqref{HKc2}, for any $f\in H^{\frac12}$, we have
$$K_u^2f=H_u^2f-(f\ |\ u)u\ .$$
Denote 
\begin{equation}\label{defw}
w(f)=(1-xH_u^2)^{-1}(f)-(1-xK_u^2)^{-1}(f)\ ,
\end{equation}
then
\begin{align*}
w(f)&=x\Big(f\ |\ (1-xK_u^2)^{-1}(u)\Big)(1-xH_u^2)^{-1}(u)\\
&=x\Big(f\ |\ (1-xH_u^2)^{-1}(u)\Big)(1-xK_u^2)^{-1}(u)\ .
\end{align*}
We may observe the two vectors $(1-xH_u^2)^{-1}(u)$ and $(1-xK_u^2)^{-1}(u)$ are co-linear,
\begin{equation}
\label{KH}(1-xK_u^2)^{-1}(u)=A(1-xH_u^2)^{-1}(u),\ A\in\mathbb{R}\ .
\end{equation}

Let us choose $f=u$, then
\begin{equation}
\label{eqw} \Big(w(u)\ |\ u\Big)=(1-A)\Big((1-xH_u^2)^{-1}(u)\ |\ u\Big)=Ax\Big(u\ |\ (1-xH_u^2)^{-1}(u)\Big)^2\ .
\end{equation}

We are to calculate the factor $A$. Since
\begin{align*}
&x\Big(u\ |\ (1-xH_u^2)^{-1}(u)\Big)=x\Big(1\ |\ (1-xH_u^2)^{-1}H_u^2(1)\Big)\\
&\qquad=\sum\limits_{n\geq0}x^{n+1}\Big(H_u^{2(n+1)}(1)\ |\ 1\Big)=\sum\limits_{n\ge0}x^{n}\Big(H_u^{2n}(1)\ |\ 1\Big)-1=J_x-1\ ,
\end{align*}
thus \eqref{eqw} yields 
$$1-A=(J_x-1)A\ ,$$
which means
$$A=\frac{1}{J_x}\ .$$

So \eqref{KH} turns out to be
\begin{equation}\label{JF1}
(1-xK_u^2)^{-1}(u)=\frac{1}{J_x}(1-xH_u^2)^{-1}(u)\ ,
\end{equation}
then combined with the definition of $w(f)$, we have 
\begin{equation}\label{JF2}
(1-xH_u^2)^{-1}(f)-(1-xK_u^2)^{-1}(f)=\frac{x}{J_x}\Big(f\ |\ (1-xH_u^2)^{-1}(u)\Big)(1-xH_u^2)^{-1}(u)\ .
\end{equation}

Using the equality \eqref{JF1},
\begin{align*}
F_x&=\Big((1-xK_u^2)^{-1}(u)\ |\ u\Big)=\frac{1}{J(x)}\Big((1-xH_u^2)^{-1}(u)\ |\ u\Big)\\
&=\frac{1}{J(x)}\Big((1-xH_u^2)^{-1}H_u^2(1)\ |\ 1\Big)=\frac{J_x-1}{xJ_x}\ .
\end{align*}

Now, we turn to prove \eqref{relationEJZ}. Use again \eqref{defw} with $f=1$, 
\begin{eqnarray*}
\Big(w(1)|1\Big)&=&\Big((1-xH_u^2)^{-1}(1)-(1-xK_u^2)^{-1}(1)|1\Big)=J_x-E_x\\
&=&x\Big(1|1-xH_u^2)^{-1}(1)\Big)\Big((1-xK_u^2)^{-1}(1)|1\Big)=x\overline{Z_x}\Big((1-xK_u^2)^{-1}(u)|1\Big)\ ,
\end{eqnarray*}
plugging \eqref{KH},
\begin{equation*}
\Big((1-xK_u^2)^{-1}(u)|1\Big)=\frac{1}{J_x}\Big((1-xH_u^2)^{-1}(u)|1\Big)=\frac{Z_x}{J_x}\ ,
\end{equation*}
then
\begin{equation}
J_x-E_x=x\overline{Z_x}\frac{Z_x}{J_x}=x\frac{|Z_x|^2}{J_x}\ ,
\end{equation}
which leads to \eqref{relationEJZ}.
\end{proof}

\medskip
Now, we are ready to show the following cancellation for the Poisson brackets of the conservation laws.
\begin{theorem} 
For any $x\in\R$, we set
$$L_x(u)=\big((1-xK_u^2)^{-1}(u)\ |\ u\big)-\alpha\big((1-xK_u^2)^{-1}(1)\ |\ 1\big)\ ,$$
Then $L_x(u(t))$ is conserved, and for every $x,y$,
\begin{equation}
\{L_x,\ L_y\}=0\ .
\end{equation}
\end{theorem}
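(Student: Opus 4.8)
The plan is to treat the two assertions separately. The conservation of $L_x(u(t))$ is immediate: for $x$ real the function $s\mapsto(1-xs)^{-1}$ is a bounded Borel function on the spectrum of the trace-class operator $K_u^2$, which is contained in $[0,\|K_{u_0}^2\|]$, so $L_x=L_f$ with $f(s)=(1-xs)^{-1}$ and Theorem~\ref{conservec2} applies directly. Equivalently, expanding $(1-xK_u^2)^{-1}=\sum_{n\ge0}x^nK_u^{2n}$ exhibits $L_x=\sum_n x^nL_n$ as a generating series of the hierarchy $L_n$, each term of which is conserved. The substance of the statement is therefore the involution relation $\{L_x,L_y\}=0$.

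For the bracket I would first fix the formalism attached to $\omega(u,v)=4\operatorname{Im}(u\mid v)$. A real-valued functional $F$ whose differential is written $dF_u(h)=2\operatorname{Re}(w_F\mid h)$ has Hamiltonian field $X_F=\tfrac{i}{2}w_F$, and a short computation gives $\{F,G\}=\operatorname{Im}(w_F\mid w_G)$ up to a fixed constant and sign. Thus the problem reduces to computing the gradients $w_{L_x}$ and checking $\operatorname{Im}(w_{L_x}\mid w_{L_y})=0$. Rather than differentiate the $K_u$--resolvent directly, I would pass through Lemma~\ref{JFc2}: by \eqref{relationJF}--\eqref{relationEJZ} one has $L_x=\frac{J_x-1}{xJ_x}-\alpha\big(J_x-x\tfrac{|Z_x|^2}{J_x}\big)$, so $L_x$ depends on $u$ only through $J_x$ (real) and $|Z_x|^2$ (real). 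Hence $L_x$ is genuinely real-valued and its gradient is the chain-rule combination of the gradients of $J_x$, $Z_x$ and $\overline{Z_x}$.

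These last gradients are $H_u$--functionals and are cleaner to differentiate, since $H_u(1)=u$, $H_u$ is symmetric, and $u\mapsto H_u$ is linear with $D_hH_u^2=H_hH_u+H_uH_h$. Writing $g_x=(1-xH_u^2)^{-1}1$ and using the symmetry identity $(H_ua\mid b)=(H_ub\mid a)$, one finds $w_{J_x}=x\,\Pi\!\big(g_x\cdot(1-xH_u^2)^{-1}u\big)$, together with an analogous formula for $w_{Z_x}$ carrying one extra term coming from the explicit factor $u$ inside $Z_x$. Substituting into $\operatorname{Im}(w_{L_x}\mid w_{L_y})$, every product of two resolvents can be collapsed through the identity $(1-xs)^{-1}(1-ys)^{-1}=\frac{1}{x-y}\big(x(1-xs)^{-1}-y(1-ys)^{-1}\big)$; the resulting scalar products are then symmetric under $x\leftrightarrow y$ and real, so their imaginary parts cancel and the bracket vanishes.

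I expect the genuine difficulty to lie entirely in the $\alpha$--correction. The part of $L_x$ independent of $\alpha$ is $F_x$, the classical Szeg\H{o} generating function, whose self-commutation is known; the new terms are the cross-products between the $F$-- and $E$--pieces and the additional gradient term produced when differentiating the explicit $u$ inside $Z_x$. The crux is to show that these organise into quantities that are either manifestly real or manifestly antisymmetric in $x\leftrightarrow y$, so that $\operatorname{Im}(w_{L_x}\mid w_{L_y})=0$ survives. As a conceptual cross-check I would verify, in parallel, that the Hamiltonian flow of $L_x$ is itself a Lax flow $\dot K_u=[C_x,K_u]$ with $\dot u=B_xu$ plus a correction proportional to $1$; repeating verbatim the cancellation of Theorem~\ref{conservec2}, which rested on the identity $(1\mid u)u=T_{|u|^2}(1)-K_u^2(1)$ coming from \eqref{HKc2}, then shows that $L_y$ is conserved along the $L_x$--flow, which is exactly $\{L_x,L_y\}=0$.
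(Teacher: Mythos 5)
Your reduction matches the paper's up to a point: conservation of $L_x$ is indeed immediate from Theorem~\ref{conservec2} with $f(s)=(1-xs)^{-1}$, and the paper likewise uses Lemma~\ref{JFc2} to write $L_x=\frac1x\big(1-\frac1{J_x}\big)-\alpha E_x$ with $E_x=J_x-x\vert Z_x\vert^2/J_x$, so that, granting $\{J_x,J_y\}=0$ from the cubic Szeg\H{o} theory, the whole problem is the vanishing of the coefficients of $\alpha$ and $\alpha^2$ in \eqref{bracketc2}. But that is exactly where your proposal stops being a proof: you assert that the cross terms ``organise into quantities that are either manifestly real or manifestly antisymmetric in $x\leftrightarrow y$'' without exhibiting this, and the criterion itself is insufficient --- the function $(x,y)\mapsto\{L_x,L_y\}$ is \emph{automatically} antisymmetric under $x\leftrightarrow y$ by antisymmetry of the Poisson bracket, so antisymmetry of a term proves nothing; what is needed is an identity. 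The paper supplies two concrete devices that are absent from your plan. First, for the $\alpha^2$ term it observes $E_x(u)=J_x(S^*u)$, whence $X_{E_x}(u)=SX_{J_x}(S^*u)$ and $\{E_x,E_y\}(u)=\{J_x,J_y\}(S^*u)=0$: the involution of the $E$'s is inherited from that of the $J$'s through the shift operator, with no new computation. Second, for the $\alpha$-linear term it works in the generic coordinates $(\rho_j,\sigma_k,\varphi_j,\theta_k)$ on $\mathcal{L}(N)$, where $J_x$ and $Z_x$ have closed forms, computes $\{J_x,\varphi_j\}=\frac{2xJ_x}{1-x\rho_j^2}$, deduces the key identity $\{J_x,Z_y\}=\frac{2ixJ_x}{x-y}(xZ_x-yZ_y)$, and only then does the combination $\frac{y}{xJ_x^2J_y}\{J_x,\vert Z_y\vert^2\}-\frac{x}{yJ_y^2J_x}\{J_y,\vert Z_x\vert^2\}$ collapse to a multiple of $\mathrm{Im}(\overline{Z}_yZ_x)+\mathrm{Im}(\overline{Z}_xZ_y)=0$. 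Your gradient route ($w_{J_x}$, $w_{Z_x}$ via $D_hH_u^2=H_hH_u+H_uH_h$ and resolvent algebra) could in principle reproduce this identity, but you have not carried out the computation, and it is precisely this identity --- not generic symmetry or reality --- that makes the bracket vanish.

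Your closing ``cross-check'' (that the Hamiltonian flow of $L_x$ is itself a Lax flow for $K_u$, so that $L_y$ is conserved along it) is an attractive alternative route, but it too is only announced: you would need to compute $X_{L_x}$, exhibit an operator $C_x$ with $\frac{d}{dt}K_u=[C_x,K_u]$ along that flow, and check that an analogue of the identity $(1\vert u)u=T_{\vert u\vert^2}(1)-K_u^2(1)$ still produces the cancellation. None of this is ``verbatim'' from Theorem~\ref{conservec2}, whose computation uses the specific vector field $-iT_{\vert u\vert^2}u-i\alpha(u\vert1)$ of the $\alpha$--Szeg\H{o} equation itself. As it stands, your proposal is a correct reduction plus a program; the two decisive cancellations, which are the actual content of the theorem, are left unproven.
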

\begin{proof}
Using the previous Lemma, we may rewrite
\begin{equation}\label{formulaLXc2}
L_x=\frac1x\big(1-\frac1{J_x}\big)-\alpha E_x\ ,
\end{equation}
with
\begin{align*}
J_x(u)&:=\big((1-xH_u^2)^{-1}(1)\ |\ 1\big)=1+x\big((1-xH_u^2)^{-1}(u)\ |\ u\big)\ ,\\
E_x(u)&:=\big((1-xK_u^2)^{-1}(1)\ |\ 1\big)=J_x(u)-x\frac{\vert Z_x(u)\vert^2}{J_x(u)}\ ,\\
Z_x(u)&:=\big(1\ |\ (1-xH_u^2)^{-1}(u)\big)\ .
\end{align*}

Recall that the identity
\begin{equation}
\{J_x,\ J_y\}=0
\end{equation}
which was obtained in \cite[section 8]{GGASENS}. We then have
\begin{equation}\label{bracketc2}
\{L_x,\ L_y\}=\alpha\Big(\frac{y}{xJ_x^2J_y}\{J_x,\ \vert Z_y\vert^2\}-\frac{x}{yJ_y^2J_x}\{J_y,\ \vert Z_x\vert^2\}\Big)+\alpha^2\{E_x,\ E_y\}\ .
\end{equation}
Let us first prove that $\{E_x,\ E_y\}=0$. Notice that
\begin{equation}
E_x(u)=J_x(S^*u)\ ,
\end{equation}
therefore
\begin{equation*}
dE_x(u)\cdot h=dJ_x(S^*u)\cdot(S^*h)=\omega(S^*h,X_{J_x}(S^*u))=\omega(h,SX_{J_x}(S^*u))\ .
\end{equation*}
We conclude
\begin{equation*}
X_{E_x}(u)=SX_{J_x}(s^*U)\ ,
\end{equation*}
thus
\begin{align*}
\{E_x,\ E_y\}(u)&=dE_y(u)\cdot X_{E_x}(u)=dJ_y(S^*u)\cdot S^*S X_{J_x}(S^*u)\\
&=d J_y(S^*u)\cdot X_{J_x}(S^*u)=\{J_x,J_y\}(S^*u)=0\ .
\end{align*}

We now show that the coefficient of $\alpha$ in \eqref{bracketc2} vanishes identically. It is enough to work on the generic states of $\mathcal{L}(N)$, so we can use the coordinates
$$(\rho_1,\cdots,\rho_{N+1},\sigma_1,\cdots,\sigma_{N},\varphi_1,\cdots,\varphi_{N+1},\theta_1,\cdots,\theta_{N})$$
for which we recall that
\begin{equation*}
\omega=\sum_{j=1}^{N+1}  d(\frac{\rho_j^2}{2})\wedge d\varphi_j+\sum_{k=1}^{N}d(\frac{\sigma_k^2}{2})\wedge d\theta_k\ .
\end{equation*}
Moreover, we have
\begin{equation*}
\rho_ju_j={\mathrm{e}}^{-i\varphi_j}H_u(u_j)\ ,
\end{equation*}
therefore, 
\begin{equation*}
Z_x(u)=\sum_{j=1}^{N+1}\frac{\Vert u_j\Vert^2}{\rho_j(1-x\rho_j^2)}{\mathrm{e}}^{i\varphi_j}\ .
\end{equation*}
Since
\begin{equation*}
J_x(u)=\frac{\prod_{k=1}^{N}(1-x\sigma_k^2)}{\prod_{j=1}^{N+1}(1-x\rho_j^2)}\ ,
\end{equation*}
we know that
\begin{equation*}
\{J_x,\ \varphi_j\}=\frac{2xJ_x}{1-x\rho_j^2}\ ,
\end{equation*}
and we infer
\begin{equation}
\{J_x,\ Z_y\}=2ixJ_x\sum_{j=1}^{N+1}\frac{\Vert u_j\Vert^2}{\rho_j(1-x\rho_j^2)(1-y\rho_j^2)}{\mathrm{e}}^{i\varphi_j}=\frac{2ixJ_x}{x-y}(xZ_x-yZ_y)\ .
\end{equation}
Consequently,
\begin{equation}
\{J_x,\ \vert Z_y\vert^2\}=2{\mathrm{Re}}(\overline{Z}_y\{J_x,\ Z_y\})=-\frac{4x^2J_x}{x-y}{\mathrm{Im}}(\overline{Z}_yZ_x)\ .
\end{equation}
We conclude that
\begin{equation}
\frac{y}{xJ_x^2J_y}\{J_x,\ \vert Z_y\vert^2\}-\frac{x}{yJ_y^2J_x}\{J_y,\ \vert Z_x\vert^2\}=-\frac{4xy}{(x-y)J_xJ_y}\big({\mathrm{Im}}(\overline{Z}_yZ_x)+{\mathrm{Im}(\overline{Z}_xZ_y)}\big)=0\ .
\end{equation}
This completes the proof.
\end{proof}

The last part of this section is devoted to proving that functions $(L_n(u))_{0\leq n\leq N}$ are generically independent on $\mathcal{L}(N)$. Actually, it is sufficient to discuss the case $|\alpha|<<1$. For $\alpha$ small enough, we may consider the term $\alpha(K_u^{2n}(1)|1)$ as a perturbation, then we only need to study the independence of $F_n:=(K_u^{2n}(u)|u)$. Using the formula \eqref{formulaLXc2}, for any $0\leq n\leq N$,
$$F_n=J_{n+1}-\sum_{\substack{k+j=n\\ j\geq1,k\geq0}}F_kJ_j\ ,$$
with $J_n=(H_u^{2n}1|1)$. Assume there exists a sequence $c_n$ such that 
$$\sum_{n\geq0}c_nF_n= 0\ ,$$
we are to prove that $c_n\equiv0$.
Indeed, 
\begin{eqnarray*}
\sum_{n\geq0}c_nJ_{n+1}-\sum_{n\geq0}\sum_{\substack{k+j=n\\ j\geq1,k\geq0}}c_nF_kJ_j= \sum_n(c_n-\sum_{0\leq k\leq N-(n+1)}c_{n+k+1}F_k)J_{n+1}=0\ ,
\end{eqnarray*}
since all the $J_{n+1}$ are independent in the complement of a closed subset of measure $0$ of $\mathcal{L}(N)$ \cite{GGASENS}, then for every $n$,
$$c_n-\sum_{0\leq k\leq N-(n+1)}c_{n+k+1}F_k=0\ .$$
 Thus $c_N=c_{N-1}=\cdots=c_0=0$. 
 
 Finally, we now have $2N+1$ linearly independent and in involution conservation laws on a dense open subset of $2N+1$ dimensional complex manifold $\mathcal{L}(N)$, thus our system is completely integrable in the Liouville sense.
\section{Multiplicity and Blaschke product}
Recall the notation in section 2, there are two kinds of eigenvalues of $K_u$, some are the dominant eigenvalues of $K_u$, which are denoted as $\sigma_k\in\Sigma_K(u)$, while the others are the dominant eigenvalues of $H_u$ with multiplicities larger than $1$. Let us denote $u(t)$ as the solution of the $\alpha$--Szeg\H{o} equation with $\alpha\neq0$. Fortunately, we are able to show that for almost all $t\in\mathbb{R}$, the Hankel operator $H_{u(t)}$ has single dominant eigenvalues with multiplicities equal to $1$. In other words, for almost every time $t\in\R$, 
$$\mathrm{rk_d} K_{u(t)}=\mathrm{rk} K_{u(t)}=\mathrm{rk} K_{u_0}\ .$$
We call the phenomenon that $H_{u(t_0)}$ has some eigenvalue $\sigma$ with multiplicity $m\geq2$ as {\em crossing at $\sigma$ at $t_0$}. 

\subsection{The motion of singular values}
Let us first introduce the following Kato-type lemma.
\begin{lemma}[Kato]
Let $P(t)$ be a projector on a Hilbert space $\mathcal{H}$ which is smooth in $t\in I$, then there exists a smooth unitary operator $U(t)$, such that
\begin{align*}
P(t)=U(t)P(0)U^*(t)\ ,
\end{align*}
and
\begin{align}\label{unitaryc2}
\frac{d}{dt}U(t)=Q(t)U(t)\ ,\ U(0)=\mathrm{Id}\ ,
\end{align}
with $Q(t)=[P'(t), P(t)]$.
\end{lemma}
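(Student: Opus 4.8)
The plan is to construct $U(t)$ as the solution of the linear ODE \eqref{unitaryc2} and then verify that conjugation by $U(t)$ intertwines $P(0)$ and $P(t)$. Before touching the ODE, I would record two purely algebraic facts that follow from $P(t)$ being a smooth orthogonal projector, i.e.\ $P=P^*=P^2$. Differentiating $P=P^*$ shows that $P'$ is self-adjoint, whence $Q=[P',P]=P'P-PP'$ is skew-adjoint, $Q^*=-Q$. Differentiating $P=P^2$ gives the Leibniz identity $P'=P'P+PP'$; left-multiplying this relation by $P$ and using $P^2=P$ then yields the ``off-diagonal'' relation $PP'P=0$.

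First I would solve \eqref{unitaryc2}. Since $Q(t)$ is smooth, hence locally bounded on $I$, the linear equation $U'(t)=Q(t)U(t)$ with $U(0)=\mathrm{Id}$ has a unique solution by the standard existence-uniqueness theorem for linear ODEs in the Banach space of bounded operators on $\mathcal{H}$. To see that $U(t)$ is unitary, I compute $\frac{d}{dt}(U^*U)=(U')^*U+U^*U'=U^*(Q^*+Q)U=0$ using $Q^*=-Q$, so $U^*(t)U(t)=\mathrm{Id}$ for all $t$; the same computation for $UU^*$ (or invertibility of $U$) gives $U(t)U^*(t)=\mathrm{Id}$. In particular this also records $(U^*)'=-U^*Q$, which I will need below.

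The core of the argument is to show that $W(t):=U^*(t)P(t)U(t)$ is constant. Differentiating and substituting $U'=QU$ and $(U^*)'=-U^*Q$ gives
\[
W'(t)=U^*\big(P'-QP+PQ\big)U=U^*\big(P'-[Q,P]\big)U\ .
\]
The hard part, and the only place the projector structure is genuinely used, is the commutator identity $[Q,P]=P'$. Expanding $[Q,P]=QP-PQ$ with $Q=P'P-PP'$ and reducing with $P^2=P$ gives $[Q,P]=P'P+PP'-2PP'P$, and the two facts recorded above, namely $P'P+PP'=P'$ and $PP'P=0$, collapse this to $[Q,P]=P'$. Hence $W'\equiv0$, so $W(t)=W(0)=P(0)$, which is precisely $P(t)=U(t)P(0)U^*(t)$. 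Smoothness of $U$ in $t$ is inherited from that of $Q$ through the ODE, completing the proof.
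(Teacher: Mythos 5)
Your proof is correct and follows essentially the same route as the paper: solve the linear ODE for $U(t)$, deduce unitarity from the skew-adjointness $Q^*=-Q$ (your fallback via invertibility of $U$ plays the role of the paper's uniqueness argument for $UU^*=\mathrm{Id}$), and show $U^*(t)P(t)U(t)$ is constant using the projector identities $P'=P'P+PP'$ and $PP'P=0$. Your isolation of the commutator identity $[Q,P]=P'$ is just a cleaner packaging of the same algebraic cancellation the paper performs inline.
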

\begin{proof}
By simple calculus, we can prove $Q^*=-Q$. Since $P(t)$ is smooth in time, then by the Cauchy theorem for linear ordinary equations, $U(t)$ is well defined. The unitary property of $U(t)$ for every $t$ is a consequence of the anti self-adjointness of $Q$.
\begin{align*}
\frac{d}{dt}(U(t)^*U(t))=\frac{d}{dt}U^*U+U^*\frac{d}{dt}U=U^*Q^*U+U^*QU=0\ ,
\end{align*}
thus $U(t)^*U(t)=\mathrm{Id}$. On the other hand, 
\begin{align*}
\frac{d}{dt}(U(t)U(t)^*)=\frac{d}{dt}UU^*+U\frac{d}{dt}U^*=QUU^*-UU^*Q\ .
\end{align*}
It is obvious that $\mathrm{Id}$ is a solution to the linear equation $\frac{d}{dt}A=QA-AQ$ with $A(0)=\mathrm{Id}$, using the uniqueness of solutions, we have $U(t)U^*(t)=\mathrm{Id}$. We now prove that $U^*(t)P(t)U(t)$ does not depend on $t$.
 \begin{align*}
 \frac{d}{dt}(U^*(t)P(t)U(t))&= \frac{d}{dt}U^*(t)P(t)U(t)+U^*(t) \frac{d}{dt}P(t)U(t)+U^*(t)P(t) \frac{d}{dt}U(t)\\
 &=U^*Q^*PU+U^*P'U+U^*PQU\\
 &=U^*(P'+[P,Q])U\\
 &=U^*(P'-PP'-P'P)U=0
 \end{align*}
 where we have used $P^2=P$. This completes the proof.
\end{proof}

If $u_0\in H^s_+$ with $s > 1$, then the solution $u(t)$ of the $\alpha$--Szeg\H{o} equation \eqref{alszc2} is real analytic in $t$ valued in $H^s_+$. By the Lax pair for $K_u$, we know that the singular values of $K_u$ are fixed, with constant multiplicities. 
\begin{proposition}
Given any initial data $u_0\in H^s_+$ with $s>1$, let $u$ be the corresponding solution to the $\alpha$--Szeg\H{o} equation. Let $\sigma > 0$ be a singular eigenvalue of $K_u$ with multiplicity $m$, and write
\[\sigma_+>\sigma>\sigma_-\]
where $\sigma_+$, $\sigma_-$ are the closest singular values of $K_u$, possibly, $\sigma_+=+\infty$ or $\sigma_-=0$.
Then one of the following two possibilities occurs.
\begin{enumerate}
 \item $\sigma$ is a singular value of $H_{u(t)}$ with multiplicity $m + 1$ for every time $t$, and $u$ is a solution of the cubic Szeg\H{o} equation \eqref{szegoc2}.

  \item There exists a discrete subset $T_c$ of times outside of which the singular values of $H_{u(t)}$ in the interval $(\sigma_-, \sigma_+)$ are $\rho_1$, $\rho_2$ of multiplicity $1$, and $\sigma$ of multiplicity $m -1$ if $m \geq 2$, with
      \[\rho_1>\sigma>\rho_2\ ,\]
      and $\rho_1$, $\rho_2$ are analytic on every interval contained into the complement of $T_c$.
\end{enumerate}
\end{proposition}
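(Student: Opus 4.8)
The plan is to reduce the whole statement to the behaviour of a single real-analytic scalar function, namely
$g(t):=\|u'_\sigma(t)\|^2=\big(\Pi_{F_{u(t)}(\sigma)}u(t)\,\big|\,u(t)\big)$, where $\Pi_{F_u(\sigma)}$ is the spectral projector of $K_u^2$ onto $F_u(\sigma)=\ker(K_u^2-\sigma^2)$. First I would record what the Lax pair of Theorem~\ref{szlaxc2} buys us: since $K_{u(t)}$ is unitarily equivalent to $K_{u_0}$, the spectrum of $K_u^2$ is conserved, so $\sigma$, $\sigma_\pm$ and the multiplicity $m=\dim F_u(\sigma)$ do not depend on $t$, and $\sigma^2$ remains an isolated eigenvalue of $K_{u(t)}^2$ separated from $\sigma_\pm^2$ by a fixed gap. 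Because $u_0\in H^s_+$ with $s>1$ makes $t\mapsto u(t)$ real-analytic into $H^s_+$, the maps $t\mapsto K_{u(t)}^2$ and $t\mapsto H_{u(t)}^2$ are real-analytic families of compact self-adjoint operators. Writing $\Pi_{F_u(\sigma)}$ as the Riesz projector $\frac{1}{2\pi i}\oint_{|z-\sigma^2|=r}(z-K_u^2)^{-1}\,dz$ along a fixed contour (equivalently, using Kato's lemma to produce a smooth unitary trivialising the moving eigenspace) shows that $\Pi_{F_u(\sigma)}$, and hence $g$, is real-analytic in $t$.

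The dichotomy of the proposition will then come from the identity theorem for real-analytic functions: either $g\equiv0$ or its zero set $T_c:=\{t:g(t)=0\}$ is discrete. The algebraic meaning of $g(t)$ is supplied by Proposition~\ref{rigidityc2}: since $F_u(\sigma)\neq\{0\}$, we are always in one of the two rigidity cases. Either $\sigma\in\Sigma_K(u)$, which holds iff $u\not\perp F_u(\sigma)$, i.e. $g(t)>0$, and then $\dim E_u(\sigma)=m-1$ with $E_u(\sigma)=F_u(\sigma)\cap u^\perp$; or $\sigma\in\Sigma_H(u)$, which holds iff $g(t)=0$, and then $\dim E_u(\sigma)=m+1$. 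Thus $g(t)>0$ is exactly the case-(2) configuration at time $t$, and $g(t)=0$ is exactly the case-(1) configuration, and the rest of the proof is to organise these two alternatives over time.

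Next I would treat the generic branch $g\not\equiv0$, which yields case~(2). Off the discrete set $T_c$ we have $\sigma\in\Sigma_K$, so $\sigma$ is an eigenvalue of $H_u^2$ of multiplicity $m-1$ coming from $F_u(\sigma)\cap u^\perp$: indeed, if $h\in F_u(\sigma)\cap u^\perp$ then the rank-one term in \eqref{HKc2} annihilates $h$, so $H_u^2h=K_u^2h=\sigma^2h$. The interlacing $\rho_1>\sigma_1>\rho_2>\cdots$ of Proposition~\ref{rigidityc2}, restricted to the window $(\sigma_-,\sigma_+)$, which contains the single $K$-value $\sigma$, forces exactly one dominant $H$-eigenvalue on each side, $\rho_1\in(\sigma,\sigma_+)$ and $\rho_2\in(\sigma_-,\sigma)$. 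That these are simple and real-analytic off $T_c$ is Kato--Rellich analytic perturbation theory for the analytic family $H_{u(t)}^2$: a simple isolated eigenvalue depends analytically on $t$, while the exceptional times, where some $\rho_j$ meets $\sigma$ or a branch loses simplicity, are isolated zeros of analytic functions; enlarging $T_c$ by these still-discrete instants gives precisely the asserted picture. A consistency check with \eqref{normukc2} is reassuring here, since $\|u'_\sigma\|^2\propto\prod_\ell(\rho_\ell^2-\sigma^2)$ vanishes exactly when some $\rho_\ell$ meets $\sigma$, so the zero set of $g$ coincides with the crossing times, as it must.

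Finally, the branch $g\equiv0$ gives case~(1), and this is where I expect the main obstacle. Here $u'_\sigma\equiv0$, so $\sigma\in\Sigma_H$ with $\dim E_u(\sigma)=m+1$ for every $t$, and the point is that this persistence forces the cubic Szeg\H{o} dynamics. The natural route is through \eqref{laxhuc2}: the failure of the $H_u$ Lax pair is carried by the single rank-one term $-i\alpha(u|1)H_1$, supported in the direction of the constant function $1$. Feeding $u'_\sigma\equiv0$ into the coupled evolution \eqref{pksystem} yields $\alpha(u|1)\,v'_\sigma\equiv0$, so whenever $\alpha\neq0$ either $(u|1)\equiv0$ or $v'_\sigma:=\Pi_{F_u(\sigma)}1\equiv0$; in either case the driving term in \eqref{laxhuc2} no longer acts on $E_u(\sigma)$, the eigenvalue $\sigma$ of $H_u$ is frozen together with the whole $H_u$-spectrum, and $u$ solves \eqref{szegoc2}. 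Making the clause ``$u$ solves the cubic Szeg\H{o} equation'' fully precise, i.e. upgrading the decoupling on the $\sigma$-block to the inertness of the entire $\alpha$-driving term, is the delicate step; I would close it using the rigidity of Proposition~\ref{rigidityc2} together with the conservation laws of Theorem~\ref{conservec2}.
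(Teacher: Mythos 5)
Your architecture is genuinely different from the paper's, and most of it is sound. The paper argues locally at a crossing time $t_0$: it forms the Riesz projector of $H_{u(t)}^2$ on a small spectral window around $\sigma^2$, factors the characteristic polynomial of the reduced block as $(\sigma^2-\lambda)^{m-1}(\lambda^2+a(t)\lambda+b(t))$ with $a,b$ analytic, and then, via Kato's unitary trivialisation and the broken Lax identity \eqref{laxhuc2}, computes the time derivative of the reduced block: in the basis of the projections of $1$ and $u$ it is a $2\times2$ matrix with negative determinant whenever $(u|1)\neq0$, so a multiplicity-$(m+1)$ eigenvalue splits instantly unless $(u|1)$ and all its derivatives vanish at $t_0$. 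Your replacement of all this by the single analytic function $g(t)=\Vert u'_\sigma(t)\Vert^2$, with the dictionary ``crossing at $t$ $\Leftrightarrow$ $g(t)=0$'' supplied by Proposition~\ref{rigidityc2} and the dichotomy supplied by the identity theorem, is correct and arguably cleaner; your case-(2) branch also closes properly, since a multiple eigenvalue of $H_{u}^2$ in the window other than $\sigma^2$ would force, by rigidity, an eigenvalue of $K_u^2$ in $(\sigma_-^2,\sigma_+^2)$ different from $\sigma^2$, which the Lax pair forbids. (One piece of looseness, which you share with the paper, is that $T_c$ must also absorb the crossing times at $\sigma_\pm$, which are discrete by the same dichotomy, for the count ``exactly one $\rho_i$ on each side of $\sigma$'' to be valid.)

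The genuine gap is the last step of your $g\equiv0$ branch. From \eqref{pksystem} you correctly obtain $\alpha(u|1)v'_\sigma\equiv0$, hence by analyticity $(u|1)\equiv0$ or $v'_\sigma\equiv0$; but the claim that ``in either case \dots $u$ solves \eqref{szegoc2}'' is a non sequitur for the second alternative. If $v'_\sigma\equiv0$ while $(u|1)\not\equiv0$, then $i\partial_tu-\Pi(|u|^2u)=\alpha(u|1)\not\equiv0$, so $u$ simply does not solve the cubic Szeg\H{o} equation, and no amount of ``decoupling of the $\sigma$-block'' changes that: case (1) of the proposition would be false, not vacuously true. What is needed is to show that the second alternative cannot occur, i.e.\ that at a crossing time $v'_\sigma$ never vanishes. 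This is exactly the content of the paper's Lemma in Section 4 (the one proving $\ell_k<0$ at a crossing), and it does not follow from Theorem~\ref{conservec2} or rigidity alone; it uses the spectral structure of Proposition~\ref{actionc2}. Concretely: since the spectral projector $P_{E_u(\sigma)}$ commutes with $H_u$ and $H_u(1)=u$, one has $P_{E_u(\sigma)}(1)=H_u(u_\sigma)/\sigma^2$, so $v'_\sigma=0$ would force $H_u(u_\sigma)$ to be collinear with $u_\sigma$; combined with $\sigma u_\sigma=\Psi_\sigma H_u(u_\sigma)$, where $\Psi_\sigma$ is the Blaschke product of degree $m\geq1$ attached to the crossing, this would make $\Psi_\sigma$ constant on the circle, a contradiction, whence $\Vert v'_\sigma\Vert=\Vert u_\sigma\Vert/\sigma\neq0$. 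Inserting this lemma, your branch $g\equiv0$ yields $(u|1)\equiv0$ and the proof closes; as written, the key clause of case (1) is unproved.
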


\begin{proof}
Let us assume that $\sigma$ is a singular value of multiplicity $m+1$ of $H_{u(t_0)}$ for some time $t_0$. Then we may select $\delta > 0$ and $\epsilon > 0$ such that
\[\sigma_+>\sigma+\epsilon>\sigma>\sigma-\epsilon>\sigma_-\]
such that, for every $t \in [t_0-\delta,t_0+\delta]$, $\sigma^2-\epsilon$ and $\sigma^2+\epsilon$ are not eigenvalues of $H_{u(t)}^2$. Then we know that $H_{u(t)}^2$ has either $\sigma^2$ as an eigenvalue of multiplicity $m + 1$, or admits in $(\sigma^2-\epsilon,\sigma^2+\epsilon)$ two eigenvalues of multiplicity $1$, $\rho_1$, $\rho_2$ on both sides of $\sigma$. Set
\begin{equation}
\label{orthc2} P(t):=(2i\pi)^{-1}\int_{C(\sigma^2,\epsilon)}(z\mathrm{Id} -H_{u(t)}^2)^{-1}dz\ .
\end{equation}
We know that $P(t)$ is an orthogonal projector, depending analytically of $t \in (t_0 -\delta, t_0 + \delta)$, and that $P(t_0)$ is just the projector onto
\begin{equation*}
E(t_0) := \ker(H_{u(t_0)}^2-\sigma^2\mathrm{Id})\ .
\end{equation*}
Consider the selfadjoint operator
\begin{equation*}
A(t) := H_{u(t)}^2P(t)
\end{equation*}
acting on the $(m + 1)$-dimensional space $E(t) = \mathrm{Ran} P(t)$. Then its characteristic polynomial is
\begin{equation*}
\mathcal{P}(\lambda, t) = (\sigma^2-\lambda)^{m-1}(\lambda^2 + a(t)\lambda + b(t))\ ,
\end{equation*}
where $a$, $b$ are real analytic, real valued functions, such that
$$a^2 - 4b \ge 0\ .$$
Notice that the condition $a(t)^2 - 4b(t) = 0$ is precisely equivalent to the fact that $H_{u(t)}^2$ has $\sigma^2$ as an eigenvalue of multiplicity $m + 1$. Since
this function is analytic, it is either identically $0$, or different from $0$ for $0 < |t-t_0| < \delta $ and $\delta > 0$ small enough. Moreover, by the following perturbation analysis, the first condition only occurs if
$$(1|u(t)) = 0$$
for every $t \in (t_0 -\delta, t_0 +\delta)$. Since $(1|u)$ is a real analytic function of $t$, this would imply that it is identically $0$, whence $u$ is a solution of the cubic Szeg\H{o} equation. We now come back to the perturbation analysis, let $U(t)$ be a unitary operator given as in the Kato-type lemma above, denote
\begin{equation*}
B(t)=U^*(t)A(t)U(t)\ ,
\end{equation*}
then
\begin{equation*}
B(t_0)= \sigma^2\mathrm{Id} P(t_0)\ .
\end{equation*}
Let us calculate the derivative of $B$, we find
\begin{align*}
\frac{d}{dt}B(t)=\frac{d}{dt}\left(U^*(t)H_{u(t)}^2U(t)U^*(t)P(t)U(t)\right)=\frac{d}{dt}\left(U^*(t)H_{u(t)}^2U(t)P(t_0)\right)\ .
\end{align*}
Since $\frac{d}{dt}U(t)=Q(t)U(t)$ with $Q(t)=[P'(t),P(t)]$, then
\begin{equation*}
\frac{d}{dt}B(t)=U^*\Big(\frac{d}{dt}H_{u(t)}^2+[H_{u(t)}^2,Q(t)]\Big)UP(t_0)\ ,
\end{equation*}
using \eqref{laxhuc2},
\begin{align*}
\frac{d}{dt}H_{u(t)}^2=[B_u,H_u^2]-i\alpha(u|1)H_1H_u+i\alpha(1|u)H_uH_1\ .
\end{align*}
For any $h_1,h_2\in E(t_0)$, 
\begin{align*}
([B_u,H_u^2]h_1,h_2)+([H_u^2,Q]h_1,h_2)=0\ ,
\end{align*}
then
\begin{align*}
(\frac{d}{dt}B(t_0)h_1,h_2)=-i\alpha[(u(t_0)|1)(h_1|u(t_0))(1|h_2)-(1|u(t_0))(u(t_0)|h_2)(h_1|1)]\ .
\end{align*}
Denote by $v,w$ as the projections onto $E(t_0)$ of $1$ and $u$ respectively. If $(u(t_0)|1)\ne 0$, then the corresponding matrix under the base $(v,w)$ turns out to be
\begin{align*}
\begin{pmatrix}
-i\alpha(u|1)(v|w) & i\alpha(1|u)\|w\|^2\\
-i\alpha(u|1)\|v\|^2 & i\alpha(1|u)(w|v)
\end{pmatrix} 
\end{align*}
which has a negative determinant if $(u(t_0)|1)\neq0$. For the case $(u(t_0)|1)=0$ with $\frac{d^n}{dt^n}(u|1)(t_0)\neq0$ for some $n\in\mathbb{N}$, we only need to consider $\frac{d^{n+1}}{dt^{n+1}}(B(t))(t_0)$, 
\begin{align*}
\left(\frac{d^{n+1}}{dt^{n+1}}B(t_0)h_1,h_2\right)=-i\alpha\left[\Big(\frac{d^n}{dt^n}(u|1)\Big)(t_0)(h_1|u(t_0))(1|h_2)-\Big(\frac{d^n}{dt^n}(1|u)\Big)(t_0)(u(t_0)|h_2)(h_1|1)\right]\ ,
\end{align*}
with any $h_1,h_2\in E(t_0)$. It is similar as the case $n=0$. This completes the proof.
\end{proof}

Since $u(t)$ satisfying $(1|u(t))\equiv0$ would be a solution of the cubic Szeg\H{o} equation, which is well studied by G\'erard and Grellier \cite{GGASENS,GGINV,GGHW,GG2015AMS}. We assume $(1|u)$ is not identically zero in the rest of this article. From the discussion above, we have
\begin{corollary}
The dominant eigenvalues of $H_{u(t)}$ are of multiplicity $1$ for almost all $t\in\mathbb{R}$.
\end{corollary}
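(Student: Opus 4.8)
The plan is to reduce the statement to a counting argument that combines the rigidity alternatives of Proposition~\ref{rigidityc2} with the conservation of the spectrum of $K_u^2$, and then to apply the preceding Proposition at each fixed singular value of $K_u$. The key idea is that a \emph{crossing} of a dominant eigenvalue of $H_{u(t)}$ can only happen at one of the finitely many values that are frozen by the Lax pair.

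First I would show that a crossing forces coincidence with a singular value of $K_u$. Suppose $\rho>0$ is a dominant eigenvalue of $H_{u(t)}$ with multiplicity $\ell:=\dim E_{u(t)}(\rho)\ge 2$. Since $\rho\in\Sigma_H(u(t))$, alternative (1) of Proposition~\ref{rigidityc2} applies and gives $\dim F_{u(t)}(\rho)=\ell-1\ge 1$; hence $\rho^2$ is an eigenvalue of $K_{u(t)}^2$. By the Lax pair \eqref{laxc2}, $K_{u(t)}$ is unitarily equivalent to $K_{u_0}$, so the spectrum of $K_{u(t)}^2$ is a fixed set of positive numbers $\{(\sigma^{(i)})^2\}_i$ with fixed multiplicities $m_i$, independent of $t$ (finite when $u_0\in\mathcal{L}(N)$, at most countable in general). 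Consequently, every time at which some dominant eigenvalue of $H_{u(t)}$ has multiplicity $\ge 2$ is a time at which $\rho=\sigma^{(i)}$ for some $i$.

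Next I would apply the preceding Proposition to each fixed value $\sigma^{(i)}$. Because we have assumed $(1\,|\,u)$ is not identically zero, $u$ is not a solution of the cubic Szeg\H{o} equation, so alternative (1) of that Proposition is excluded and alternative (2) must hold at $\sigma^{(i)}$: there is a discrete set of times $T_c^{(i)}$ outside of which the singular values of $H_{u(t)}$ in the gap $(\sigma_-^{(i)},\sigma_+^{(i)})$ are two simple values $\rho_1>\sigma^{(i)}>\rho_2$ together with $\sigma^{(i)}$ of multiplicity $m_i-1$, the latter being non-dominant (it lies in $\Sigma_K$, so by alternative (2) of Proposition~\ref{rigidityc2} one has $u\perp E_{u(t)}(\sigma^{(i)})$). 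In particular, for $t\notin T_c^{(i)}$ the value $\sigma^{(i)}$ is never a dominant eigenvalue of $H_{u(t)}$ of multiplicity $\ge 2$.

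Finally I would assemble the conclusion. If $t\notin\bigcup_i T_c^{(i)}$, then by the previous paragraph no $\sigma^{(i)}$ is a dominant eigenvalue of $H_{u(t)}$ of multiplicity $\ge 2$, while by the first step any dominant eigenvalue of multiplicity $\ge 2$ would have to be one of the $\sigma^{(i)}$; hence all dominant eigenvalues of $H_{u(t)}$ are simple. Thus the exceptional set is contained in $\bigcup_i T_c^{(i)}$, a finite (or at worst countable) union of discrete, hence countable, sets, so it has Lebesgue measure zero, which gives the claim. The only delicate point is the first step, namely matching the crossing phenomenon to a coincidence with a conserved eigenvalue of $K_u$ through rigidity alternative (1); once this identification is in place, the remainder is bookkeeping on a measure-zero union of the discrete exceptional sets furnished by the Proposition.
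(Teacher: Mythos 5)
Your proof is correct and takes essentially the same route as the paper: the paper's own justification is simply the preceding Proposition on the motion of singular values together with the standing assumption $(1\,|\,u)\not\equiv 0$ (which rules out the cubic Szeg\H{o} alternative), so that crossings are confined to the discrete sets $T_c$. The only difference is that you spell out the step the paper leaves implicit in ``from the discussion above,'' namely that any multiple dominant eigenvalue of $H_{u(t)}$ must coincide, via alternative (1) of Proposition~\ref{rigidityc2} and the Lax-pair conservation of the spectrum of $K_u^2$, with one of the finitely or countably many frozen singular values of $K_{u_0}$, after which the measure-zero bookkeeping is the same.
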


Recall the notation in section 2, by rewriting the conservation laws in Theorem~\ref{conservec2} as
\begin{equation}
L_n:=\Big(K_u^{2n}(u)\ |\ u\Big)-\alpha\Big(K_u^{2n}(1)\ |\ 1\Big)=\sum\limits_k \sigma_k^{2n} \left(\|u'_k\|^2-\alpha\|v'_k\|^2\right)\ ,
\end{equation}
we get the following conserved quantities
\begin{equation}
\ell_k:=\|u'_k\|^2-\alpha\|v'_k\|^2 \ .
\end{equation}

\begin{lemma}
Let $\alpha>0$. If there exists a crossing at $\sigma_k$ at time  $t=t_0$, then $\ell_k<0$.
\end{lemma}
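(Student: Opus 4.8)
The plan is to use the conservation of $\ell_k$ and to read it off at the crossing time $t_0$, where the spectral picture of $H_{u(t_0)}$ degenerates in a controlled way. First I would determine the relevant dimensions at $t_0$. Away from the crossing $\sigma_k\in\Sigma_K(u)$, so case~(2) of Proposition~\ref{rigidityc2} gives $\dim F_u(\sigma_k)=m_k$ and $\dim E_u(\sigma_k)=m_k-1$. By the Proposition on the motion of the singular values, at the crossing the two simple eigenvalues $\rho_1>\sigma_k>\rho_2$ of $H_{u(t)}$ both collapse onto $\sigma_k$, while $\dim F_u(\sigma_k)=m_k$ remains frozen by the Lax pair for $K_u$. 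Hence $\dim E_{u(t_0)}(\sigma_k)=m_k+1=\dim F_{u(t_0)}(\sigma_k)+1$, which is exactly case~(1) of Proposition~\ref{rigidityc2}; in particular $u\not\perp E_{u(t_0)}(\sigma_k)$ and $F_{u(t_0)}(\sigma_k)=E_{u(t_0)}(\sigma_k)\cap u^\perp$.

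Since every element of $F_{u(t_0)}(\sigma_k)$ is then orthogonal to $u$, the orthogonal projection of $u$ onto $F_{u(t_0)}(\sigma_k)$ vanishes, i.e. $u'_k(t_0)=0$. As $\ell_k=\Vert u'_k\Vert^2-\alpha\Vert v'_k\Vert^2$ is constant in time, evaluating it at $t_0$ yields $\ell_k=-\alpha\Vert v'_k(t_0)\Vert^2\le0$. Because $\alpha>0$, the whole statement reduces to the strict bound $v'_k(t_0)\ne0$, i.e. $1\not\perp F_{u(t_0)}(\sigma_k)$; this is the heart of the matter.

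To prove this I would work inside $E:=E_{u(t_0)}(\sigma_k)$. Set $u_\tau:=\Pi_E u$, which is nonzero by case~(1), and $w:=\Pi_E 1$. From $H_u(1)=\Pi(u)=u$ and the fact that $H_u$ commutes with the spectral projector $\Pi_E$ of $H_u^2$, I obtain $u_\tau=\Pi_E H_u(1)=H_u(w)$; applying $H_u$ once more and using $H_u^2=\sigma_k^2\,\mathrm{Id}$ on $E$ gives $w=\sigma_k^{-2}H_u(u_\tau)$. Since $F_{u(t_0)}(\sigma_k)=E\cap u^\perp=E\ominus\mathbb{C}u_\tau$, one has $v'_k=\Pi_{F}w$, which vanishes precisely when $w\in\mathbb{C}u_\tau$, equivalently $H_u(u_\tau)=c\,u_\tau$ for some scalar $c$; note $c\ne0$, since $H_u$ is injective on $E$ (as $H_u^2=\sigma_k^2\ne0$ there). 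Now I invoke the Blaschke relation of case~(1) of Proposition~\ref{actionc2}: $\sigma_k u_\tau=\Psi_\tau H_u(u_\tau)$ with $\Psi_\tau\in\mathcal{B}_{m_k}$ of degree $m_k\ge1$. The alignment $H_u(u_\tau)=c\,u_\tau$ would force $\sigma_k u_\tau=c\,\Psi_\tau u_\tau$, hence $\Psi_\tau\equiv\sigma_k/c$ on the set where $u_\tau\ne0$; but $u_\tau$ is a nonzero rational function, so it is nonzero almost everywhere on $\mathbb{S}^1$, and $\Psi_\tau$ would be a unimodular constant, contradicting $\deg\Psi_\tau=m_k\ge1$. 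Therefore $v'_k(t_0)\ne0$ and $\ell_k=-\alpha\Vert v'_k(t_0)\Vert^2<0$.

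I expect the main obstacle to be precisely the passage from $\ell_k\le0$ to $\ell_k<0$: the sign is a soft consequence of the rigidity dichotomy of Proposition~\ref{rigidityc2}, but ruling out the degenerate alignment $H_u(u_\tau)\parallel u_\tau$ genuinely needs the explicit Blaschke description of $E_u(\sigma_k)$ in Proposition~\ref{actionc2}. A minor point to record along the way is that a crossing can only occur at $\sigma_k>0$ (so that $w=\sigma_k^{-2}H_u(u_\tau)$ is legitimate), which holds because the colliding eigenvalues $\rho_1,\rho_2$ are positive with $\rho_1>\sigma_k$.
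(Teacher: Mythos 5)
Your proof is correct, and it follows the paper's skeleton up to the decisive step: a crossing at $\sigma_k$ places it in case~(1) of Proposition~\ref{rigidityc2}, so $F_{u(t_0)}(\sigma_k)=E_{u(t_0)}(\sigma_k)\cap u^\perp$ and hence $u'_k(t_0)=0$, and conservation of $\ell_k$ reduces the lemma to the strict statement $v'_k(t_0)\neq 0$. It is on this last point that you genuinely depart from the paper, and your route is the rigorous one. The paper asserts the equality $\Vert v'_k\Vert=\vert(1\,\vert\,H_u(u_k))\vert/\Vert H_u(u_k)\Vert=\Vert u_k\Vert/\sigma_k$; but $\Vert u_k\Vert/\sigma_k$ is the norm of the projection of $1$ onto $E_{u(t_0)}(\sigma_k)$ (your $w=\sigma_k^{-2}H_u(u_k)$), not onto $F_{u(t_0)}(\sigma_k)$, and the two coincide only when $H_u(u_k)\perp u$, i.e.\ $(1\vert u_k)=0$, which fails in general. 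The paper's own example of existence of crossing ($u_0=\frac{z-p}{1-pz}$, $\alpha=1$, $\sigma=1$) shows this: there $\Vert u_1\Vert/\sigma=1$, whereas $\Vert v'_1\Vert^2=-\ell_1=1-\vert p\vert^2<1$. What is true is the identity implicit in your argument, $\Vert v'_k\Vert^2=\Vert u_k\Vert^2/\sigma_k^2-\vert(1\vert u_k)\vert^2/\Vert u_k\Vert^2$, which vanishes exactly when Cauchy--Schwarz is saturated, i.e.\ when $H_u(u_k)\in\mathbb{C}u_k$; excluding this alignment via the Blaschke relation $\sigma_k u_k=\Psi_k H_u(u_k)$ with $\deg\Psi_k=\dim E_{u(t_0)}(\sigma_k)-1\geq 1$ is precisely the ingredient the paper's justification is missing, so your argument does not merely reprove the lemma, it closes a real gap. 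One small touch-up: you call $u_\tau$ ``a nonzero rational function,'' which is neither available nor needed for general data $u_0\in H^s_+$, $s>1$; it suffices that $u_\tau\neq 0$ on a set of positive measure of $\mathbb{S}^1$ (true of any nonzero $L^2$ function), since then the rational function $\sigma_k-c\,\Psi_k$ vanishes on a set of positive measure, hence identically, forcing $\Psi_k$ to be constant and contradicting $\deg\Psi_k\geq1$.
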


\begin{proof}
Since there is a crossing at $\sigma_k$, then $\sigma_k\in\Sigma_H(u(t_0))$ with multiplicity $m\geq2$. Then
\begin{align*}
F_u(\sigma_k)=E_u(\sigma_k)\cap u^\perp=\left\{\frac{g}{D}H_u(u_k):\ g\in \mathbb{C}_{m-2}[z]\right\}\ .
\end{align*}
Hence, $u'_k=0$ while $v'_k\ne0$, since
\begin{eqnarray}
\Vert v'_k\Vert=\frac{(1,H_u(u_k))}{\Vert H_u(u_k)\Vert}=\frac{\Vert u_k\Vert}{\sigma_k}\neq0\ .
\end{eqnarray}
Thus $\ell_k=\|u'_k\|^2-\alpha\|v'_k\|^2 <0$ for $\alpha>0$.
\end{proof}

Here, we present an example to show the existence of crossing.
\begin{example}[Existence of crossing]
Let $u_0(z)=\frac{z-p}{1-pz}$ with $p\ne0$ and $|p|<1$, and $u$ be the corresponding solution to the equation
\begin{equation}
i\partial_t u=\Pi(|u|^2u)+(u|1)\ .
\end{equation}
It is obvious that $u_0\in \mathcal{L}(1)$ and $1\in\Sigma_H(u_0)$ with multiplicity 2, and 
$$L_1(u)=\Big(K_u^{2}(u)\ |\ u\Big)-\Big(K_u^{2}(1)\ |\ 1\Big)=-(1-|p|^2)<0\ .$$
Let us represent the Hamiltonian function $E=\frac14\Vert u\Vert_{L^4}^4+\frac12\vert (u|1)\vert^2$ under the coordinates
$$\rho_1,\rho_2,\sigma,\varphi_1,\varphi_2,\theta\ ,$$
\begin{align*}
E&=\frac14(\rho_1^4+\rho_2^4-\sigma^4)\\
&\qquad+\frac12\frac{\rho_1^2(\rho_1^2-\sigma^2)^2+\rho_2^2(\sigma^2-\rho_2^2)^2+2\rho_1\rho_2(\rho_1^2-\sigma^2)(\sigma^2-\rho_2^2)\cos(\varphi_1-\varphi_2)}{(\rho_1^2-\rho_2^2)^2}\\
&=\frac14+\frac12|p|^2\ .
\end{align*}
Notice that $\sigma=1$ and $\rho_1^2+\rho_2^2-\sigma^2=\Vert u\Vert_{L^2}^2=1$, then $\rho_1^2+\rho_2^2=2$. Set $I=\frac{\rho_1^2-\rho_2^2}{2}$, $\varphi=\varphi_1-\varphi_2$, then $\rho_1^2=1+I$ and $\rho_2^2=1-I$, thus we can rewrite $E$ as
\begin{align*}
E=\frac{1}{4}(1+2I^2)+\frac14(1+\sqrt{1-I^2}\cos(\varphi))\ .
\end{align*}
Thus
\begin{align*}
\frac{dI}{dt}&=-2\frac{\partial E}{\partial \varphi}=\frac12\sqrt{1-I^2}\sin(\varphi)\\
&=\pm\frac12\sqrt{-4I^2+(8|p|^2-5)I^2+4|p|^2(1-|p|^2)}\\
&=\pm\sqrt{(a-I^2)(b+I^2)}\ ,
\end{align*}
with $a,b$ satisfy
\begin{equation*}\left\{
\begin{split}
&a>0,b>0\ ,\\
&ab=|p|^2(1-|p|^2)\ ,\\
&a-b=2|p|^2-5/4\ .
\end{split}
\right.\end{equation*}

Recall the definition of Jacobi elliptic functions. The incomplete elliptic integral of the first kind $F$ is defined as
\[F(\varphi,k)\equiv\int_0^\varphi \frac{d\theta}{\sqrt{1-k^2\sin^2\theta}}\ ,\]
then the Jacobi elliptic function $sn$ and $cn$ are defined as follows,
\[sn(F(\varphi,k),k)=\sin\varphi\ ,\]
\[cn(F(\varphi,k),k)=\cos\varphi\ .\]
Then we may solve the above equation,
\begin{align*}
I(t)=\sqrt{a}\mathrm{cn}\left(\sqrt{a+b}\big(t-t_0\big)+F\Big(\frac\pi2,\sqrt{\frac{a}{a+b}}\Big),\sqrt{\frac{a}{a+b}}\right)\ .
\end{align*}
Therefore, there exists a discrete set of time $0\in T_c$, such that $I(t)=0$ for every $t\in T_c$. In other words, crossing happens at $t\in T_c$.
\end{example}
\subsection{Blaschke product}
We aim to show that the Blaschke products $\Psi(t)$ of $K_{u(t)}$ do not change their $\mathbb{S}^1$--orbits as times grows even before or after crossings.
\begin{proposition}\label{evoblaschc2}
For any open interval $\Omega$ contained into the complement of $T_c$, for any $\sigma_k\in\Sigma_K(u(t))$ with $t\in \Omega$,
\begin{equation}
\label{blaschkec2} K_{u(t)}u_k'(t)=\sigma_k\Psi_k(t)u'_k(t)\ .
\end{equation}
Then there exists a function $\psi_k(t): \Omega\to\mathbb{S}^1$, such that
\begin{equation}
\Psi_k (t)={\mathrm e}^{i\psi_k(t)}\Psi_k (0)\ ,\ t\in \Omega\ .
\end{equation}
\end{proposition}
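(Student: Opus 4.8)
The plan is to fix a connected component $\Omega$ of $\mathbb{R}\setminus T_c$. On $\Omega$ the number $\sigma_k$ is a dominant eigenvalue of $K_{u(t)}$, i.e. $u\not\perp F_{u(t)}(\sigma_k)$, with constant multiplicity $m_k$ (the spectrum of $K_u^2$ is frozen by the Lax pair \eqref{laxc2}), so the existence of $\Psi_k(t)\in\mathcal B_{m_k-1}$ with $K_{u(t)}u_k'(t)=\sigma_k\Psi_k(t)u_k'(t)$ is nothing but Proposition~\ref{actionc2}(2) applied at each $t$. The real content is the orbit statement. To set it up I would first note that the spectral projector $P_k(t)$ of $K_{u(t)}^2$ onto $F_{u(t)}(\sigma_k)$, written as a Cauchy contour integral of $(z-K_{u(t)}^2)^{-1}$ around $\sigma_k^2$, is real-analytic in $t$; hence $u_k'(t)=P_k(t)u(t)$ (which is nonzero on $\Omega$), $v_k'(t)=P_k(t)1$, and therefore $\Psi_k(t)=K_{u(t)}u_k'(t)/(\sigma_k u_k'(t))$ are real-analytic $\Omega$-curves.

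Next I would record the evolution of the distinguished vector $u_k'$. From the $K_u$-Lax pair \eqref{laxc2} one obtains a unitary $W(t)$ with $\dot W=C_uW$ conjugating $K_{u_0}$ into $K_{u(t)}$ (the analogue of the Kato lemma \eqref{unitaryc2}), whence $P_k(t)=W(t)P_k(t_0)W(t)^*$ and $\dot P_k=[C_u,P_k]$. Combining this with $\dot u=-iT_{|u|^2}u-i\alpha(u|1)$, and using that $K_u^2=\sigma_k^2$ on $\mathrm{Ran}\,P_k$ so that the $\tfrac{i}{2}K_u^2$ part of $C_u=-iT_{|u|^2}+\tfrac{i}{2}K_u^2$ cancels, the $P_kT_{|u|^2}u$ contributions drop out and I recover the first line of \eqref{pksystem},
$$\dot u_k'=-iT_{|u|^2}u_k'-i\alpha(u|1)v_k'.$$

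The heart of the argument is to differentiate the defining relation $K_{u(t)}u_k'(t)=\sigma_k\Psi_k(t)u_k'(t)$ in $t$, using $\tfrac{d}{dt}K_u=[C_u,K_u]$ together with the $\mathbb{C}$-antilinearity of $K_u$ (scalar factors are conjugated, and $K_u^2u_k'=\sigma_k^2u_k'$) and substituting the formula for $\dot u_k'$ just obtained. The terms carrying $K_u(T_{|u|^2}u_k')$ cancel, and after dividing by $\sigma_k$ one is left with an identity of the schematic form
$$\dot\Psi_k\,u_k'=i\Big([M_{\Psi_k},T_{|u|^2}]u_k'+\sigma_k^2\Psi_k u_k'+\alpha(u|1)\Psi_k v_k'+\tfrac{\alpha}{\sigma_k}(1|u)K_uv_k'\Big),$$
where $M_{\Psi_k}$ is multiplication by the bounded holomorphic function $\Psi_k$. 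The objective is to prove that the right-hand side is a scalar multiple of $\Psi_k u_k'$ with coefficient independent of $z$.

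This last reduction is where I expect the main obstacle. The commutator is a Hankel-type remainder, $[M_{\Psi_k},T_{|u|^2}]u_k'=-\Pi\big(\Psi_k(I-\Pi)(|u|^2u_k')\big)$, and the two $\alpha$-terms mix $u_k'$ with the projection $v_k'=P_k1$ of the constant function (note $K_uv_k'=P_kS^*u$); all three have to be reorganized, using Toeplitz–Hankel commutation relations (such as $H_uT_b=T_{\bar b}H_u$ for holomorphic $b$) and the explicit action of $K_u,H_u$ on the basis $\{z^aD^{-1}u_k'\}$ from Proposition~\ref{actionc2}(2), until they collapse onto $\Psi_k u_k'$. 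Granting this, I get $\dot\Psi_k=\lambda(t)\Psi_k$ with $\lambda(t)$ independent of $z$, so $\Psi_k(t)=e^{\int_{t_0}^t\lambda}\Psi_k(t_0)$. Since each $\Psi_k(t)$ is an inner function, $|\Psi_k(t)|\equiv1$ on $\mathbb{S}^1$, which forces $|e^{\int_{t_0}^t\lambda}|\equiv1$ and hence $\mathrm{Re}\,\lambda\equiv0$; writing $\lambda=i\dot\psi_k$ with $\psi_k$ real and integrating yields the asserted relation $\Psi_k(t)=e^{i\psi_k(t)}\Psi_k(0)$ on $\Omega$. Equivalently, the simplification amounts precisely to showing that the Blaschke roots of $\Psi_k$ do not move in time, so that $\Psi_k(t)$ and $\Psi_k(0)$ are finite Blaschke products with identical zeros and therefore differ only by a unimodular constant.
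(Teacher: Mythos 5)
Your proposal follows the same skeleton as the paper's proof: differentiate the relation $K_{u(t)}u'_k(t)=\sigma_k\Psi_k(t)u'_k(t)$ using the Lax pair \eqref{laxc2}, obtain $\dot P_k=[C_u,P_k]$ for the spectral projector, deduce $\dot u'_k=-iT_{|u|^2}u'_k-i\alpha(u|1)v'_k$ (the paper's \eqref{diff2}, since $v'_k$ is collinear to $u'_k$), and reduce everything to showing that the leftover term is a purely imaginary scalar multiple of $\Psi_k u'_k$. Your endgame (innerness of $\Psi_k$ forces $\mathrm{Re}\,\lambda\equiv 0$) is a fine variant of the paper's, which instead produces the manifestly imaginary coefficient $i\bigl(\sigma_k^2+2\alpha\,\mathrm{Re}\bigl[\tfrac{(u|1)(1|u'_k)}{(u'_k|u'_k)}\bigr]\bigr)$. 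However, there is a genuine gap: the decisive step, namely the claim
\begin{equation*}
[T_{|u|^2},\Psi_k](u'_k)=0\ ,
\end{equation*}
is exactly what you label ``the main obstacle'' and then explicitly grant. This identity is the heart of the proposition, not a routine reorganization via Toeplitz--Hankel commutation relations, and your proof is incomplete without it. The paper proves it as follows: reduce to an elementary factor $\chi_p(z)=\frac{z-p}{1-pz}$ of the finite Blaschke product; note that $[\Pi,\chi_p]$ has one-dimensional range, spanned by $\frac{1}{1-\overline{p}z}$, so $[T_{|u|^2},\chi_p]f$ is proportional to $\frac{1}{1-\overline{p}z}$ and vanishes as soon as its scalar product with $1$ does; then compute $([T_{|u|^2},\chi_p]f|1)=(\chi_pf-(\chi_p|1)f\,|\,u)(u|1)$ and observe that $\chi_pf-(\chi_p|1)f\in F_u(\sigma_k)\cap 1^\perp=E_u(\sigma_k)=F_u(\sigma_k)\cap u^\perp$ by Proposition~\ref{rigidityc2}, so the pairing vanishes. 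Nothing in your sketch supplies a substitute for this rank-one-range argument.

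Two smaller points. First, your worry that the $\alpha$-terms ``have to be reorganized'' is unfounded: on $\Omega$ the value $\sigma_k$ is $K$-dominant, so by Proposition~\ref{rigidityc2} the vector $v'_k=P_k(1)$ is collinear to $u'_k$; writing $v'_k=c\,u'_k$ and using the antilinearity of $K_u$, both $\alpha$-terms are automatically scalar multiples of $\Psi_k u'_k$ --- the commutator is the \emph{only} term at issue. Second, your closing remark that the needed simplification ``amounts precisely to showing that the Blaschke roots of $\Psi_k$ do not move'' is a restatement of the conclusion (that $\Psi_k(t)$ stays on the $\mathbb{S}^1$-orbit of $\Psi_k(0)$), so it cannot serve as a proof step; in the paper the constancy of the zeros is a \emph{consequence} of the commutator identity, and is then extended across crossing times by the separate lemma following Proposition~\ref{evoblaschke}.
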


\begin{proof}
Differentiating the above equation \eqref{blaschkec2} and using the Lax pair structure \eqref{laxc2}, one obtains
\begin{equation}
\label{diff1}[C_u,K_u](u'_k)+K_u\left(\frac{d u'_k}{dt}\right)=\sigma_k \dot \Psi_k u'_k+\sigma_k \Psi_k  \frac{d u'_k}{dt}\ .
\end{equation}
Recall $u'_k=P_k(u)$, where $P_k$ as \eqref{orthc2} by replacing $H_u$ with $K_u$, then
\begin{equation*}
\frac{d}{dt}P_k(t)=[C_u,P_k]\ .
\end{equation*}
Rewriting $\Pi(|u|^2u)=T_{|u|^2}(u)=(iC_u+\frac12K_u^2)u$, then the $\alpha$--Szeg\H{o} equation \eqref{alszc2} turns out to be
\[\frac{d u}{dt}=(C_u-\frac{i}{2}K_u^2)u-i\alpha(u|1)\ ,\]
then
\begin{align*}
\frac{d u'_k}{dt}&=(\frac{d}{dt}P_k)(u)+P_k(\frac{d u}{dt})\\
&=[C_u,P_k]u+P_k C_u u-\frac{i}{2} K_u^2P_k(u)-i\alpha(u|1)P_k(1)
\end{align*}
thus
\begin{equation}
\label{diff2}\frac{d u'_k}{dt}=-iT_{|u|^2} u'_k-i\alpha(u\ |\ 1)\frac{(1\ |\ u'_k)}{(u'_k\ |\ u'_k)}u'_k\ .
\end{equation}
Then \eqref{diff1} and \eqref{diff2} obtained above lead to
\begin{align*}
\left (\dot \Psi_k-i\left(\sigma_k^2+2\alpha \mathrm{Re}\big[\frac{(u\ |\ 1)(1\ |\ u'_k)}{(u'_k\ |\ u'_k)}\big]\right)\Psi_k \right )u'_k=-i[T_{|u|^2},\Psi_k](u'_k)\ .
\end{align*}

We claim that $$[T_{|u|^2},\Psi_k](u'_k)=0\ .$$
therefore
\begin{equation*}
\Psi_k (t)={\mathrm e}^{i(\sigma_k ^2t+\gamma_k (t))}\Psi_k (0)\ ,
\end{equation*}
where
\begin{align*}
\gamma_k (t)=2\alpha \int _0^t\frac{{\mathrm Re} [(u(t')\ |\ 1)(1\ |\ u'_k (t')]}{| u'_k (t')| ^2}\, dt'\ .
\end{align*}
It remains to prove the claim (one can also refer to \cite[Theorem 8]{GGHankel} for the proof). We first prove that, for any $\chi_p(z)=\frac{z-p}{1-pz}$ with $|p|<1$, 
$$[T_{|u|^2},\chi_p]f=0$$ 
for any $f\in F_u(\sigma_k)$ such that $\chi_pf\in F_u(\sigma_k)$. For any $L^2$ function $g$, 
$$[\Pi, \chi_p]g=(1-|p|^2)H_{1/(1-\overline{p}z)}(h)\ ,$$
where $\overline{(\mathrm{Id}-\Pi)g}=Sh$. Consequently, the range of $[\Pi,\chi_p]$ is one dimensional, directed by $\frac{1}{1-\overline{p}z}$. In particular, $[T_{|u|^2},\chi_p]f$ is proportional to $\frac{1}{1-\overline{p}z}$.
Since
\begin{align*}
([T_{|u|^2},\chi_p]f|1)&=(T_{|u|^2}\chi_pf-\chi_pT_{|u|^2}f|1)\\
&=(\chi_pf|H_u^2(1))-(\chi_p|1)(H_u^2f|1)\\
&=(H_u^2(\chi_pf)|1)-(\chi_p|1)(H_u^2f|1)\\
&=(\chi_pf-(\chi_p|1)f|u)(u|1)\ ,
\end{align*}
We used \eqref{KH} to gain the last equality. Since $\chi_pf-(\chi_p|1)f\in F_u(\sigma_k)$ is orthogonal to $1$, by Proposition~\ref{actionc2}, $\chi_pf-(\chi_p|1)f\in E_u(\sigma_k)$, hence $\chi_pf-(\chi_p|1)f\in F_u(\sigma_k)$ is orthogonal to $u$. This proves that $[T_{|u|^2},\chi_p]f=0$.  
\end{proof}

Therefore, we have
\begin{corollary}
$$\mathrm{rk} K_{u(t)}=\mathrm{rk_d} K_{u(t)}=\mathrm{rk} K_{u_0},\  a.e.\ t<\infty.$$
\end{corollary}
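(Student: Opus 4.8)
The plan is to obtain this corollary directly from the preceding Corollary on the genericity of simple dominant eigenvalues of $H_{u(t)}$, combined with the conservation of the operator rank of $K_u$ coming from the Lax pair. First I would recall the bookkeeping relation between the three quantities: from the defining formulas
\[
\mathrm{rk}(K_u)=\sum_j(\ell_j-1)+\sum_k m_k,\qquad \mathrm{rk_d}(K_u)=\sum_k m_k,
\]
one reads off $\mathrm{rk}(K_u)=\mathrm{rk_d}(K_u)+\sum_j(\ell_j-1)$. Hence, at a given time $t$, the first equality $\mathrm{rk}K_{u(t)}=\mathrm{rk_d}K_{u(t)}$ holds if and only if every dominant eigenvalue $\rho_j$ of $H_{u(t)}$ is simple, i.e. $\ell_j=1$ for all $j$.

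Second, I would invoke the preceding Corollary, which asserts that the dominant eigenvalues of $H_{u(t)}$ have multiplicity $1$ for almost every $t\in\R$. More concretely, for each dominant eigenvalue $\sigma_k$ of $K_u$ the Proposition on the motion of singular values yields a discrete exceptional set of crossing times outside of which the eigenvalues of $H_{u(t)}$ near $\sigma_k$ are simple. Since $u_0\in\mathcal{L}(N)$, only finitely many such $\sigma_k$ occur, so the total exceptional set $T_c$ is a finite union of discrete sets, hence itself discrete and of Lebesgue measure zero. For $t\notin T_c$ all $\ell_j=1$, and therefore $\mathrm{rk}K_{u(t)}=\mathrm{rk_d}K_{u(t)}$.

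Third, for the remaining equality I would use the Lax pair structure of Theorem~\ref{szlaxc2}: integrating $\frac{dK_u}{dt}=[C_u,K_u]$ produces a unitary $W(t)$ with $K_{u(t)}=W(t)K_{u_0}W(t)^*$, so $K_{u(t)}$ and $K_{u_0}$ share the same range dimension and $\mathrm{rk}K_{u(t)}=\mathrm{rk}K_{u_0}$ for \emph{every} $t$; equivalently, this is just the invariance of the manifold $\mathcal{L}(N)$ under the flow. Combining this with the previous step, for every $t\notin T_c$ we obtain the full chain $\mathrm{rk}K_{u(t)}=\mathrm{rk_d}K_{u(t)}=\mathrm{rk}K_{u_0}$, which is the claim since $T_c$ is negligible.

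I expect no serious analytic obstacle here, since the substantive work has already been done: the perturbation analysis showing that crossings are isolated in time, and the conservation of the spectrum of $K_u^2$. The only point genuinely requiring care is the passage from the single-eigenvalue statement of the Proposition to a simultaneous, global genericity statement. One must verify that the finitely many local crossing sets do not accumulate, so that their union $T_c$ is discrete and the exceptional set is truly of measure zero; this is where the finiteness of $\mathrm{rk}(K_{u_0})$ for data in $\mathcal{L}(N)$ is essential.
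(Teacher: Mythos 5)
Your proof is correct and takes essentially the same route the paper intends: the paper states this corollary without a separate proof, as an immediate consequence of the Lax-pair conservation of $\mathrm{rk}\,K_u$ (unitary equivalence of $K_{u(t)}$ with $K_{u_0}$), the bookkeeping identity $\mathrm{rk}(K_u)-\mathrm{rk_d}(K_u)=\sum_j(\ell_j-1)$, and the preceding Proposition and Corollary showing that, under the standing assumption $(1|u)\not\equiv 0$, crossing times form a discrete (hence Lebesgue-null) set for each of the finitely many positive singular values of $K_{u_0}$. The only cosmetic imprecision is that crossings occur at the conserved positive eigenvalues of $K_{u_0}^2$ rather than at ``dominant eigenvalues $\sigma_k$ of $K_u$'' --- at a crossing time such an eigenvalue is precisely \emph{not} $K$-dominant --- but this does not affect your argument.
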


 We know that $\Psi _k(t)$ is defined for every $t$ in an open subset $\Omega $ of $\R $ consisting of the complement of  a discrete closed subset, corresponding to crossings at $\sigma _k^2$. Furthermore, by Proposition~\ref{evoblaschc2}, on each connected component of $\Omega $, the zeroes of $\Psi _k(t)$ are constant. Together with the following property, $\Psi_k(t)$ never changes it orbit even after the crossings.
\begin{proposition}\label{evoblaschke}
 For every time $t$ such that $\Psi _k(t)$ is defined, the zeroes of $\Psi _k(t)$ are the same.
 \end{proposition}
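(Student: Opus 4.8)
The plan is to reduce the global statement to a matching problem at each crossing time. By Proposition~\ref{evoblaschc2}, on every connected component of the domain $\Omega$ of definition of $\Psi_k$ we have $\Psi_k(t)=\mathrm{e}^{i(\sigma_k^2 t+\gamma_k(t))}\Psi_k(0)$, so the zeroes of $\Psi_k(t)$ (the roots in $\mathbb{D}$ of the numerator of the Blaschke product) are \emph{constant} on each such component. It therefore suffices to fix a crossing time $t_0\in T_c$ and to prove that the two constant zero sets attached to the components immediately to the left and to the right of $t_0$ coincide. The fact that makes this feasible, despite the degeneration of $\Psi_k$ at $t_0$, is that the space $F_u(\sigma_k)(t)=\ker(K_{u(t)}^2-\sigma_k^2\mathrm{Id})$ never degenerates: by the Lax pair \eqref{laxc2} the whole spectrum of $K_u^2$ with multiplicities is conserved, so $\sigma_k^2$ stays an isolated eigenvalue of $K_{u(t)}^2$ of constant multiplicity $m_k$. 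Hence the spectral projector $P_k(t)$, defined by a fixed contour around $\sigma_k^2$, is real-analytic in $t$ on a full neighbourhood of $t_0$, with no exceptional times.

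The key step is then to analyse the vanishing of $u'_k$ at $t_0$. At a crossing one has $\sigma_k\in\Sigma_H(u(t_0))$, so Proposition~\ref{rigidityc2} gives $F_u(\sigma_k)\subset u^\perp$ and therefore $u'_k(t_0)=P_k(t_0)u(t_0)=0$. Since $t\mapsto u'_k(t)=P_k(t)u(t)$ is real-analytic and nonzero off the discrete set $T_c$, its zero at $t_0$ has a finite order $r\ge1$; I would write $u'_k(t)=(t-t_0)^r w(t)$ with $w$ real-analytic, $w(t_0)\ne0$, and $w(t)\in F_u(\sigma_k)(t)$ near $t_0$. Because the scalar $(t-t_0)^r$ is real and $K_u$ is $\mathbb{C}$-antilinear, the defining relation $K_u(u'_k)=\sigma_k\Psi_k\,u'_k$ rescales cleanly to $K_{u(t)}(w(t))=\sigma_k\Psi_k(t)\,w(t)$, whence, for $t\ne t_0$,
\[
\Psi_k(t)=\frac{K_{u(t)}(w(t))}{\sigma_k\,w(t)}\ .
\]
The right-hand side is built from the analytic $H^s_+$-valued functions $K_{u(t)}(w(t))$ and $w(t)$, whose limits as $t\to t_0$ exist and agree from both sides; thus $\Psi_k(t)$ admits a common two-sided limit $L=K_{u(t_0)}(w(t_0))/(\sigma_k w(t_0))$ at $t_0$.

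To conclude I would invoke continuity of zeroes. The family $\Psi_k(t)$ consists of degree-$(m_k-1)$ Blaschke products converging to $L$ uniformly on compact subsets of $\mathbb{D}$ (using $H^s_+\hookrightarrow C(\overline{\mathbb{D}})$ for $s>\tfrac12$ and $w(t_0)\ne0$). Let $Z^-$ and $Z^+$ be the constant zero sets on the left and right components. For any $p\in Z^-$ one has $\Psi_k(t)(p)=0$ for all $t<t_0$ near $t_0$, so $L(p)=0$, and likewise for $p\in Z^+$; moreover a Hurwitz argument shows the order of vanishing of $L$ at $p$ equals the constant multiplicity of $p$ in $Z^-$ (resp. $Z^+$). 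Since $L$ is a Blaschke product with exactly $m_k-1$ zeroes counted with multiplicity, and each of $Z^-,Z^+$ already accounts for all of them, we must have $Z^-=Z^+=\{\text{zeroes of }L\}$. Running this over each $t_0\in T_c$ gives the claim.

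The main obstacle, as the argument makes clear, is precisely the passage through the crossing, where the formula $\Psi_k=K_u(u'_k)/(\sigma_k u'_k)$ becomes $0/0$. The crux is to extract the analytic, nonvanishing representative $w$ of the one-dimensional direction of $u'_k$ and to transfer the Blaschke relation to it; this rests entirely on the analyticity of the conserved spectral projector $P_k$ and on the finite vanishing order of the analytic function $u'_k$. Once $w$ is available, the antilinearity of $K_u$ is harmless (the cancelled factor is real), and no zero can escape to $\mathbb{S}^1$ simply because the zero sets $Z^\pm$ are already constant on each side, so their limit is themselves.
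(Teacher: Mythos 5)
Your proof is correct, and it follows the paper's overall skeleton — constancy of the zeroes on each connected component of $\Omega$ via Proposition~\ref{evoblaschc2}, analyticity of the spectral projector $P_k(t)$ guaranteed by the Lax pair, and a matching argument at each crossing time $t_0$ — but your desingularization device at the crossing is genuinely different from the paper's. The paper sidesteps the $0/0$ problem by replacing $u'_k$ with $v'_k(t):=P_k(t)(1)$: at a crossing $\sigma_k$ is $H$--dominant, so by Proposition~\ref{actionc2} the space $F_u(\sigma_k)$ is \emph{not} orthogonal to $1$, hence $v'_k$ is analytic and nonvanishing in a neighborhood $\Omega'$ of the crossing set; since $v'_k$ is collinear to $u'_k$ on $\Omega$, the rational family $\Psi_k^\sharp:=K_u(v'_k)/(\sigma_k v'_k)$ is analytic in $t$ across $t_0$ and agrees with $\Psi_k$ up to a unimodular factor $\mathrm{e}^{i\beta_k(t)}$, so the zero sets on the two sides of $t_0$ match by continuity of an analytic family, with no counting required. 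You instead keep $u'_k$ itself and factor out its finite-order zero, writing $u'_k(t)=(t-t_0)^r w(t)$ with $w(t_0)\neq 0$ — legitimate because $u'_k$ is real-analytic and vanishes only on a discrete set, and because the real scalar $(t-t_0)^r$ passes through the $\C$--antilinear $K_u$ unchanged — and then conclude by a normal-family/Hurwitz count, using that each one-sided constant zero set already has the full cardinality $m_k-1$, so no zero can be lost to the boundary or created in the limit. Your route is more self-contained: it never invokes the structural fact that $1\not\perp F_u(\sigma_k)$ at crossings, and it treats the case $r=0$ uniformly; its cost is the more delicate limiting step (strictly speaking, the quotient formula for $\Psi_k$ is only valid off the zeroes of $w(t_0)$, and one should add the one-line normal-family remark — the $\Psi_k(t)$ are uniformly bounded by $1$ — to upgrade to locally uniform convergence on all of $\mathbb{D}$ before applying Hurwitz). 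The paper's route buys a slightly stronger output, namely a genuinely analytic (up to phase) extension $\Psi_k^\sharp$ of $\Psi_k$ through the crossing times, which is precisely the content of its auxiliary lemma.
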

 
 \begin{proof}
The proposition is a consequence of the following lemma.
 \end{proof}
\begin{lemma}
There exists an analytic function $\Psi _k^\sharp $ defined in a neighborhood $\Omega '$ of  $\Omega ^c $ and valued into rational functions, 
and, for every $t\in \Omega \cap \Omega '$, there exists $\beta (t)\in \T $ such that
$$\Psi _k(t, z)={\rm e}^{i\beta _k(t)}\Psi _k^\sharp (t,z)\ .$$
\end{lemma}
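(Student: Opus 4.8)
The plan is to build $\Psi_k^\sharp$ by removing, at a crossing time, the zero of $u'_k$ and then inverting the defining relation \eqref{blaschkec2}. On $\Omega$ the function $\Psi_k$ is characterised by $\sigma_k\Psi_k u'_k=K_uu'_k$, so that $\Psi_k=K_uu'_k/(\sigma_k u'_k)$ is the quotient of two elements of the finite--dimensional space $F_u(\sigma_k)$ of rational functions. The only obstruction to extending this formula past a crossing time $t_0\in\Omega^c$ is that $u'_k(t_0)=0$, as established in the crossing lemma; the whole point is to divide this zero out.

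First I would isolate the analyticity that survives the crossing. Since $\sigma_k^2$ is an eigenvalue of $K_{u(t)}^2$ of \emph{constant} multiplicity $m_k$ for all $t$ (this is exactly the content of the Lax pair \eqref{laxc2}, which keeps the entire spectrum of $K_u^2$ fixed), the contour $C(\sigma_k^2,\epsilon)$ separates $\sigma_k^2$ from the rest of the spectrum on a full neighbourhood $\Omega'$ of $t_0$, including $t_0$ itself. Hence the Riesz projector $P_k(t)$, defined as in \eqref{orthc2} with $K_u$ in place of $H_u$, is real--analytic on $\Omega'$, and so is $u'_k(t)=P_k(t)u(t)$, because $u(t)$ is real--analytic in $t$. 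In particular $u'_k(t)\in\mathrm{Ran}\,P_k(t)=F_{u(t)}(\sigma_k)$ for every $t\in\Omega'$, and the subspace $F_{u(t)}(\sigma_k)$ varies analytically even though the companion space $E_{u(t)}(\sigma_k)$ jumps across $t_0$.

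The core step is a removable--singularity argument. Because $\sigma_k\in\Sigma_K(u)$ on $\Omega$, we have $u'_k(t)\neq0$ there, so the analytic $L^2_+$--valued function $u'_k$ is not identically zero near $t_0$ and vanishes to some finite order $r\ge1$ at $t_0$. Writing $u'_k(t)=(t-t_0)^rw(t)$, the function $w$ is real--analytic on $\Omega'$, takes values in $F_{u(t)}(\sigma_k)$, and satisfies $w(t_0)\neq0$. I would then set
\[\Psi_k^\sharp(t,z):=\frac{K_{u(t)}w(t)}{\sigma_k\,w(t)}\ ,\]
which is legitimate since $K_uw$ and $w$ both lie in $F_u(\sigma_k)$ and are rational, so their quotient is a rational function of $z$ whose coefficients depend analytically on $t\in\Omega'$, with no pole in $\mathbb D$ by continuity from $\Omega$. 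For real $t$ the scalar $(t-t_0)^r$ is real, and as $K_u$ is $\C$--antilinear we get $K_uu'_k=(t-t_0)^rK_uw$; dividing, the factors $(t-t_0)^r$ cancel and $\Psi_k^\sharp$ agrees with $K_uu'_k/(\sigma_ku'_k)=\Psi_k$ on $\Omega\cap\Omega'$. After normalising $\Psi_k^\sharp$ to be angle--free (monic numerator), the unimodular angle $\psi_k(t)$ of Proposition~\ref{evoblaschc2} is pushed into a factor $\mathrm{e}^{i\beta_k(t)}$, yielding the asserted $\Psi_k(t,z)=\mathrm{e}^{i\beta_k(t)}\Psi_k^\sharp(t,z)$. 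This immediately gives Proposition~\ref{evoblaschke}: the zeros of $\Psi_k^\sharp$ equal those of $\Psi_k$, which by Proposition~\ref{evoblaschc2} are constant on each component of $\Omega$; being zeros of an analytic rational--valued family, they must therefore coincide across $t_0$.

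The main obstacle is the analyticity of $\Psi_k^\sharp$ at $t_0$ itself: one must rule out that the quotient $K_uw/(\sigma_kw)$ degenerates as $\sigma_k$ migrates from $\Sigma_K(u)$ into $\Sigma_H(u)$. At the crossing $\dim E_u(\sigma_k)$ jumps by $2$ while $\dim F_u(\sigma_k)$ stays equal to $m_k$, and a priori a numerator zero could run to $\mathbb S^1$ and drop the degree. I would circumvent this by phrasing the construction at the level of the normalised denominator $D(t,z)$ of $F_{u(t)}(\sigma_k)$: since $F_{u(t)}(\sigma_k)$ is an analytic family of subspaces of rational functions of bounded degree, $D(t,\cdot)$ and its reciprocal $P(z)=z^{m_k-1}\overline{D(1/\bar z)}$ depend analytically on $t$, so that $\Psi_k^\sharp=P/D$ is an honest analytic family of rational functions on all of $\Omega'$, which is exactly what closes the argument.
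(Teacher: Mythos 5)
Your core construction is correct, but it takes a genuinely different route from the paper's. The paper sidesteps the removable-singularity argument entirely by projecting the constant function $1$ rather than $u$: since the Lax pair freezes the spectrum of $K_{u(t)}^2$, the Riesz projector $P_k(t)$ onto $F_{u(t)}(\sigma_k)$ is analytic on all of $\mathbb{R}$, and $v'_k(t):=P_k(t)(1)$ is analytic and, crucially, \emph{does not vanish} at crossing times: when $\sigma_k$ becomes $H$-dominant, Proposition~\ref{actionc2} shows that $F_u(\sigma_k)$ is not orthogonal to $1$ (indeed $\Vert v'_k\Vert=\Vert u_k\Vert/\sigma_k\neq0$, as in the crossing lemma). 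So the paper simply sets $\Psi_k^\sharp:=K_{u}(v'_k)/(\sigma_k v'_k)$ on a neighborhood of $\Omega^c$; on $\Omega$ the collinearity $v'_k=(1\vert u'_k)\,u'_k/\Vert u'_k\Vert^2$ together with the antilinearity of $K_u$ produces exactly the unimodular factor $\mathrm{e}^{-i\beta_k(t)}=\overline{(1\vert u'_k)}/(1\vert u'_k)$ of the statement. You instead keep $u'_k$, which does vanish at a crossing, and divide out its finite-order zero, $u'_k(t)=(t-t_0)^rw(t)$; this works — the Banach-space-valued factorization is legitimate, $w(t)\in F_{u(t)}(\sigma_k)$ by continuity, and your observation that $(t-t_0)^r$ is real, hence passes through the antilinear $K_u$, is precisely what gives $\Psi_k^\sharp=\Psi_k$ on $\Omega\cap\Omega'$, i.e.\ $\beta_k\equiv0$ near each crossing. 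The paper's choice buys a single global formula and no bookkeeping of vanishing orders; yours buys a trivial phase. Both arguments rely, at the same level of detail, on the fact that a quotient of two analytic $F_{u(t)}(\sigma_k)$-valued families with nowhere-vanishing denominator is an analytic family of rational functions of bounded degree (interpolation at finitely many points of $\mathbb{D}$ where the denominator does not vanish settles this).

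One caveat: your last paragraph is both the shakiest step and unnecessary. The lemma does not require $\Psi_k^\sharp(t_0,\cdot)$ to retain degree $m_k-1$, and a degree drop at $t_0$ would hurt neither the statement nor its use in Proposition~\ref{evoblaschke}: if $p$ is a zero of $\Psi_k(t,\cdot)$ for all $t$ in a component of $\Omega$ adjacent to $t_0$, then the analytic function $t\mapsto\Psi_k^\sharp(t,p)$ vanishes on an interval, hence identically near $t_0$, so $p$ is a zero on the other side as well. By contrast, the claimed analyticity in $t$ of the ``normalized denominator $D(t,z)$ of $F_{u(t)}(\sigma_k)$'' is asserted rather than proved, and establishing it amounts to the very analyticity across $t_0$ that the lemma is after. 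Drop that paragraph; the first three carry the proof.
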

\begin{proof}
 Since $\sigma _k^2$ is an eigenvalue of constant multiplicity $m$ of $K_{u(t)}^2$, the orthogonal projector $P_k(t)$ onto $F_{u(t)}(\sigma _k)$ is an analytic function of $t\in \R $. Consequently, the vector
$$v'_k(t):=P_k(t)(1)$$
depends analytically on $t$. Furthermore, $v'_k(t)$ is not $0$ if $t\not \in \Omega $. Indeed, from the description of $F_u(\tau)$ provided by Proposition~\ref{actionc2} when $\tau $ is a singular value associated to the pair $(H_u,K_u)$, we observe that, if $\tau $ is $H$ dominant, the space $F_u(\tau )$ is not orthogonal to $1$. Consequently, we can define, for $t$ in a neighborhood  $\Omega '$ of $\Omega ^c$,
$$\Psi _k^\sharp (t,z):=\frac{K_{u(t)}(v'_k(t))(z)}{\sigma _kv'_k(t,z)}$$
as an analytic function of $t$ valued into rational functions of $z$. On the other hand, if $t\in \Omega $, Proposition~\ref{actionc2} shows that 
$$F_{u(t)}(\sigma _k)\cap u(t)^\perp =E_{u(t)}(\sigma _k)=F_{u(t)}(\sigma _k)\cap 1^\perp \ ,$$
therefore $v'_k(t)$ is collinear to $u'_k(t)$,
$$v'_k(t)=(1\vert u'_k(t))\frac{u'_k(t)}{\Vert u'_k(t)\Vert ^2}\ .$$
Since, from the definition of $\Psi _k(t)$,
$$K_{u(t)}(u'_k(t))=\sigma _k\Psi _k(t)u'_k(t)\ ,$$
we infer that there exists an analytic $\beta _k$ on $\Omega \cap \Omega '$ valued into $\T $ such that
$$K_{u(t)}(v'_k(t))=\sigma _k{\rm e}^{-i\beta _k(t)}\Psi _k(t)v'_k(t)\ .$$
This completes the proof.
 \end{proof}

\section{Necessary condition of norm explosion} In this section, let $u(t)$ be the solution of $\alpha$--Szeg\H{o} equation \eqref{alszc2} with initial data $u_0\in\mathcal{L}(N)$, $N\in\mathbb{N}^+$, $u^\infty =\lim u(t_n)$ for the weak * topology of $H^{1/2}$, for some sequence $t_n$ going to infinity. To study the large time behavior of solutions, it is equivalent to study the rank of the shifted Hankel operator $K_u$. 
\begin{lemma}
The solution $u(t)$ to the $\alpha$--Szeg\H{o} equation will stay in a compact subset of $\mathcal{L}(N)$ if and only if for all the adherent values $u^\infty$ of $u(t)$ at infinity,
\begin{equation}
\mathrm{rk}K_{u^\infty}=\mathrm{rk}K_{u_0}\ .
\end{equation}
\end{lemma}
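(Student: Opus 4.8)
The plan is to reduce everything to the conservation of the $H^{1/2}$ norm together with a semicontinuity argument for the rank of $K_u$ under weak limits. First I would record the identity
\[
\|u\|_{H^{1/2}}^2=\mathrm{Tr}(H_u^2)=\mathrm{Tr}(K_u^2)+Q(u)\ ,
\]
which follows from \eqref{HKc2} upon taking traces, since $(\cdot\,|\,u)u$ is rank one with trace $Q(u)=\|u\|_{L^2}^2$. Because the spectrum of $K_u^2$ is conserved by Theorem~\ref{szlaxc2} and $Q$ is conserved, the quantity $\|u(t)\|_{H^{1/2}}$ is constant along the flow. In particular the trajectory is bounded in $H^{1/2}$, every adherent value $u^\infty$ is a genuine weak-$H^{1/2}$ limit of some $u(t_n)$, and by the compact embedding $H^{1/2}(\mathbb{S}^1)\hookrightarrow L^2(\mathbb{S}^1)$ one gets $u(t_n)\to u^\infty$ strongly in $L^2$, whence $Q(u^\infty)=Q(u_0)$.

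The technical core, which I expect to be the main obstacle, is to control the nonzero spectrum of $K_{u^\infty}^2$. The idea is that strong $L^2$ convergence $u(t_n)\to u^\infty$ forces $H_{u(t_n)}\to H_{u^\infty}$ in the strong operator topology (using that $\|H_u\|_{\mathrm{op}}\le\|H_u\|_{\mathrm{HS}}=\|u\|_{H^{1/2}}$ is uniformly bounded and that convergence holds on the dense set of trigonometric polynomials), hence $K_{u(t_n)}=S^*H_{u(t_n)}\to K_{u^\infty}$ strongly and $K_{u(t_n)}^2\to K_{u^\infty}^2$ strongly as well. Writing $\{\sigma_k^2\}$ with multiplicities $m_k$ for the conserved nonzero eigenvalues of $K_{u(t)}^2$, each $K_{u(t_n)}^2$ is annihilated by the fixed polynomial $p(\lambda)=\lambda\prod_k(\lambda-\sigma_k^2)$; since a fixed polynomial in a strongly convergent, uniformly bounded sequence of operators converges strongly, $p(K_{u^\infty}^2)=0$, so the spectrum of $K_{u^\infty}^2$ is contained in $\{0\}\cup\{\sigma_k^2\}$. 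Consequently $\mathrm{rk}K_{u^\infty}\le N$, and
\[
\mathrm{Tr}(K_{u^\infty}^2)=\sum_k\sigma_k^2\,\dim F_{u^\infty}(\sigma_k)\le\sum_k\sigma_k^2 m_k=\mathrm{Tr}(K_{u_0}^2)\ ,
\]
with equality if and only if all multiplicities are preserved, i.e. if and only if $\mathrm{rk}K_{u^\infty}=N$.

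Combining the two displays gives the pivotal equivalence: since
\[
\|u^\infty\|_{H^{1/2}}^2=\mathrm{Tr}(K_{u^\infty}^2)+Q(u^\infty)\le\mathrm{Tr}(K_{u_0}^2)+Q(u_0)=\|u_0\|_{H^{1/2}}^2=\lim_n\|u(t_n)\|_{H^{1/2}}^2\ ,
\]
the condition $\mathrm{rk}K_{u^\infty}=N$ is equivalent to $\|u^\infty\|_{H^{1/2}}=\lim_n\|u(t_n)\|_{H^{1/2}}$, which in the Hilbert space $H^{1/2}$ upgrades the weak convergence $u(t_n)\rightharpoonup u^\infty$ to strong convergence. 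In other words, $\mathrm{rk}K_{u^\infty}=N$ holds precisely when the subsequence converges strongly in $H^{1/2}$ to a limit lying in $\mathcal{L}(N)$.

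The two implications then follow quickly. If every adherent value has rank $N$, then by boundedness in $H^{1/2}$ each sequence $u(t_n)$ admits a weakly convergent subsequence, which by the above converges strongly to a point of $\mathcal{L}(N)$; together with continuity of the flow on bounded time intervals this shows the orbit is relatively compact in $H^{1/2}$ with all limit points in $\mathcal{L}(N)$, i.e. it stays in a compact subset of $\mathcal{L}(N)$. Conversely, if the orbit stays in a compact set $\mathcal{K}\subset\mathcal{L}(N)$, any adherent value $u^\infty$ is a weak limit of points of $\mathcal{K}$; extracting a strongly convergent subsequence in $\mathcal{K}$ identifies $u^\infty$ with a point of $\mathcal{K}\subset\mathcal{L}(N)$, so $\mathrm{rk}K_{u^\infty}=\mathrm{rk}K_{u_0}=N$.
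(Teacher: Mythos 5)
Your route is genuinely different from the paper's. The paper argues through the Kronecker-type theorem (its characterization of $\mathcal{L}(N)$ as rational functions with poles off $\overline{\mathbb D}$): a family in $\mathcal{L}(N)$ fails to be relatively compact exactly when poles approach the closed disc, and then the weak limit falls into some $\mathcal{L}(N')$ with $N'<N$. You instead work spectrally, and your scheme has real merits: the trace identity $\mathrm{Tr}(H_u^2)=\mathrm{Tr}(K_u^2)+Q(u)$ shows that the (weighted) $H^{1/2}$ norm is an exact conservation law of \eqref{alszc2} (a fact the paper never states explicitly), and your steps on weak limits, strong $L^2$ convergence, strong operator convergence of $K^2_{u(t_n)}$, the annihilation $p(K^2_{u^\infty})=0$, and the entire converse implication are all correct.

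There is, however, one genuine gap, and it sits at the pivotal step. From $p(K_{u^\infty}^2)=0$ you conclude ``consequently $\mathrm{rk}\,K_{u^\infty}\le N$'' and then the trace inequality $\mathrm{Tr}(K_{u^\infty}^2)\le \mathrm{Tr}(K_{u_0}^2)$ \emph{with equality iff all multiplicities are preserved}. But inclusion of the spectrum in $\{0\}\cup\{\sigma_k^2\}$ controls neither the rank nor the individual multiplicities $d_k:=\dim F_{u^\infty}(\sigma_k)$: nothing you proved excludes $d_k>m_k$. For instance $d_1=N$ and $d_j=0$ for $j\ge 2$ is compatible with $p(K^2_{u^\infty})=0$ and even with $\mathrm{rk}\,K_{u^\infty}\le N$, yet it gives $\mathrm{Tr}(K^2_{u^\infty})=N\sigma_1^2>\sum_j m_j\sigma_j^2$ as soon as there are two distinct $\sigma_j$'s. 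So the displayed inequality, and above all its equality case --- on which your equivalence ``rank preserved $\Leftrightarrow$ norm convergence $\Leftrightarrow$ strong $H^{1/2}$ convergence'' entirely rests --- do not follow from what precedes them. The fix stays inside your own toolbox: rank is lower semicontinuous under strong operator convergence (linear independence of a tuple $(Ax_1,\dots,Ax_{r+1})$ is an open condition), and you should apply it not to $p$ but to each $q_k(\lambda):=\lambda\prod_{j\ne k}(\lambda-\sigma_j^2)$. Indeed $q_k(K^2_{u(t_n)})$ has rank exactly $m_k$, while $q_k(K^2_{u^\infty})=q_k(\sigma_k^2)$ times the orthogonal projection onto $F_{u^\infty}(\sigma_k)$; strong convergence then yields $d_k\le m_k$ for every $k$, and with these individual bounds your trace comparison and its equality case become correct. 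A last, softer point: your argument delivers compactness of the orbit closure in the $H^{1/2}$ topology with all limit points in $\mathcal{L}(N)$, whereas the paper's pole-based notion of a compact subset of $\mathcal{L}(N)$ (which is what feeds the bounds $\Vert u(t)\Vert_{H^s}\le C$ for all $s\ge 0$ in Theorem \ref{mainthmc2}) also requires the poles to stay uniformly away from $\overline{\mathbb D}$; this can be extracted from your compactness together with norm (Hilbert--Schmidt) convergence of the Hankel operators, but it is an extra step you should make explicit.
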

\begin{proof}
By the explicit formula of functions in $\mathcal{L}(N)\subset H^s$ for every $s$ in Theorem~\ref{kroneckerc2}, $\mathrm{rk}u(t)=N$ if and only if 
$$u(z)=\frac{A(z)}{B(z)}$$
with $A,B\in\mathbb{C}_N[z],A\wedge B=1, \deg( A)= N \text{ or }
\deg( B)= N, B^{-1}(\{0\})\cap \overline{\mathbb D}=\emptyset$.

Then a sequence of $(u_n)_n$ is in a relatively compact subset of $\mathcal{L}(N)$ unless one of the poles of $u_n$ approaches the unit disk $\mathbb{D}$, then the corresponding limit $u(z)$ will be in some $\mathcal{L}(N')$ with $N'<N$.
\end{proof}

We first present a necessary condition of the norm explosion for any $\alpha\in\mathbb{R}\setminus\{0\}$.

\begin{theorem}\label{necc2}
If $\mathrm{ rk} K_{u^\infty}<\mathrm{rk} K_{u_0}$, then there exists some $k$ such that $\ell_k(u_0)=0$.
\end{theorem}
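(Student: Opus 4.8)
The plan is to combine the conservation of the dominant values $\sigma_k\in\Sigma_K(u(t))$ and of the quantities $\ell_k=\Vert u'_k\Vert^2-\alpha\Vert v'_k\Vert^2$ with a careful passage to the weak-$*$ limit. First I would extract a subsequence: since $u(t_n)\rightharpoonup u^\infty$ weak-$*$ in $H^{1/2}$, the sequence is bounded there, so each singular value $\rho_j(t_n)\le\Vert H_{u(t_n)}\Vert$ stays bounded and, together with the phases $\varphi_j(t_n)$, converges along a subsequence to some $\rho_j^\infty,\varphi_j^\infty$, while the $\sigma_k$ stay fixed. Because $H^{1/2}(\mathbb{S}^1)$ embeds compactly into $L^2(\mathbb{S}^1)$, one also has $u(t_n)\to u^\infty$ strongly in $L^2$, so these limiting parameters genuinely describe $u^\infty$.

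Next I would pin down the spectral meaning of the rank drop. For every finite $t$ the Lax pair \eqref{laxc2} keeps $\mathrm{rk}K_{u(t)}=\mathrm{rk}K_{u_0}=N$, so the strict inequality can only be produced by a pole of the rational function $u(t_n)$ reaching $\mathbb{S}^1$ in the limit (Theorem~\ref{kroneckerc2}). In the interlacing coordinates $\rho_1>\sigma_1>\rho_2>\cdots$ such a degeneration must be a collision $\rho_{j_0}^\infty=\sigma_{k_0}$ for one adjacent pair $(\rho_{j_0},\sigma_{k_0})$. Here the disjointness of $\Sigma_H(u)$ and $\Sigma_K(u)$ from Proposition~\ref{rigidityc2} is decisive: a configuration in which a single $\rho_{j_0}$ meets a single $\sigma_{k_0}$ is not admissible in $\mathcal{L}(N)$, so $\sigma_{k_0}$ must leave the spectrum of $K_{u^\infty}^2$ altogether, which is exactly the loss of rank. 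This has to be distinguished from a crossing, where two consecutive $\rho$'s pinch the same $\sigma_{k_0}$, turning it into a double dominant eigenvalue of $H_{u^\infty}$ while preserving $\mathrm{rk}K$; that alternative was analysed in the crossing lemma and its example, and there $\ell_{k_0}\to-\alpha\Vert v'_{k_0}\Vert^2\neq0$.

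It then remains to quantify the single collision through the explicit norm formulas. By \eqref{normukc2} one has $\Vert u'_{k_0}(t_n)\Vert^2\to0$, since the factor $\rho_{j_0}(t_n)^2-\sigma_{k_0}^2$ in the numerator vanishes; by \eqref{normujc2} the companion projection satisfies $\Vert u_{j_0}(t_n)\Vert^2=O(\rho_{j_0}(t_n)^2-\sigma_{k_0}^2)\to0$ at the same rate, because only $\rho_{j_0}$ collides. For $\Vert v'_{k_0}\Vert^2$ I would identify the residue of $E_x$ at $x=1/\sigma_{k_0}^2$, exactly as $\Vert u'_{k_0}\Vert^2$ was read off from $F_x$ in the proof of the norm lemma, using \eqref{relationEJZ} in the form $E_x=J_x-x\vert Z_x\vert^2/J_x$. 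The zero of $J_x$ at $1/\sigma_{k_0}^2$ collides with the pole coming from $\rho_{j_0}$, so the derivative of $J_x$ there diverges, whereas $Z_{1/\sigma_{k_0}^2}$ stays bounded because its only dangerous term carries the factor $\Vert u_{j_0}\Vert^2\to0$; hence the residue, and with it $\Vert v'_{k_0}(t_n)\Vert^2$, tends to $0$. Therefore $\ell_{k_0}(u(t_n))=\Vert u'_{k_0}(t_n)\Vert^2-\alpha\Vert v'_{k_0}(t_n)\Vert^2\to0$, and since $\ell_{k_0}$ is conserved, $\ell_{k_0}(u_0)=0$. Equivalently, writing each conservation law as $L_m(u)=\sum_k\sigma_k^{2m}\ell_k$, the same limit computation yields $L_m(u^\infty)=L_m(u_0)$ for every $m$; as $\sigma_{k_0}\notin\Sigma_K(u^\infty)$ while $\sigma_{k_0}\in\Sigma_K(u_0)$, invertibility of the Vandermonde matrix in the distinct $\sigma_k^2$ again forces $\ell_{k_0}(u_0)=0$.

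The step I expect to be the main obstacle is the limit $\Vert v'_{k_0}\Vert^2\to0$. A crude estimate suggests the opposite, since $1/J_x$ acquires a pole exactly where $\rho_{j_0}$ meets $\sigma_{k_0}$; the cancellation that saves the day is the matching decay $\Vert u_{j_0}\Vert^2=O(\rho_{j_0}^2-\sigma_{k_0}^2)$ forced by \eqref{normujc2}, and getting the two rates to balance is the crux of the computation. A second, more structural, difficulty is to rule out that the rank drop is a crossing rather than a genuine single collision, that is, to show $\sigma_{k_0}$ really leaves the spectrum of $K_{u^\infty}^2$; this is where Proposition~\ref{rigidityc2} and the rational rigidity of $\mathcal{L}(N)$ from Theorem~\ref{kroneckerc2} must be invoked.
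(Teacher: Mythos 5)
Your central step --- that a drop of $\mathrm{rk}\, K$ in the limit forces a \emph{single} collision $\rho_{j_0}(t_n)\to\sigma_{k_0}$, with all other singular values staying apart --- is asserted rather than proved, and it is essentially the content of the theorem. Neither the pole description of Theorem~\ref{kroneckerc2} nor the disjointness of $\Sigma_H$ and $\Sigma_K$ in Proposition~\ref{rigidityc2} yields it: those facts constrain each $u(t_n)$ and a formal limiting configuration, but they say nothing about \emph{how} the spectral data degenerate as $t_n\to\infty$. In particular your dichotomy (single collision with rank drop versus crossing with rank preserved) is not exhaustive. The crossing analysis of Section 4 protects the rank at a crossing only because the Lax pair acts at \emph{finite} time; as $t_n\to\infty$ nothing you invoke excludes a \emph{double} collision, $\rho_{k_0}(t_n)\to\sigma_{k_0}$ and $\rho_{k_0+1}(t_n)\to\sigma_{k_0}$ simultaneously, accompanied by a rank drop. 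In that scenario the step you yourself flag as the crux breaks down: in \eqref{normujc2} the factor $\rho_{k_0}^2-\rho_{k_0+1}^2$ in the denominator of $\Vert u_{j_0}\Vert^2$ also vanishes, so $\Vert u_{j_0}\Vert^2$ is no longer $O(\rho_{j_0}^2-\sigma_{k_0}^2)$, the quantity $Z_{1/\sigma_{k_0}^2}$ need not stay bounded, and your residue computation for $\Vert v'_{k_0}\Vert^2$ becomes indeterminate. So your proof covers only one of the possible degenerations, and the hypothesis singling it out is unjustified.

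Moreover, the natural way to justify that hypothesis is the paper's own argument, which then makes the explicit computations redundant. Since $u(t_n)\to u^\infty$ strongly in $L^2$ (compact embedding of $H^{1/2}$, whose norm is controlled by the conserved traces) and $K_{u(t_n)}\to K_{u^\infty}$ strongly, any weak limit of the eigenvectors $u'_{k_0}(t_n)$, respectively of $u'_{k_0}(t_n)/\Vert u'_{k_0}(t_n)\Vert$, lies in $\ker(K_{u^\infty}^2-\sigma_{k_0}^2\mathrm{I})$, and by the degree preservation of the Blaschke products (Proposition~\ref{evoblaschke}) a nonzero weak limit generates an eigenspace of the full dimension $m=\dim F_{u_0}(\sigma_{k_0})$; hence a rank drop forces these weak limits to vanish, and then $\Vert u'_{k_0}(t_n)\Vert^2=(u(t_n)\vert u'_{k_0}(t_n))\to 0$ and $\Vert v'_{k_0}(t_n)\Vert^2=(1\vert v'_{k_0}(t_n))\to 0$, so the conserved quantity $\ell_{k_0}$ equals $0$ --- with no need of \eqref{normujc2}, \eqref{normukc2} or $E_x$ at all. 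Two further points: your formulas and interlacing coordinates presuppose simple spectrum along the sequence, whereas the theorem allows arbitrary multiplicities $m\geq 2$, which is exactly what the Blaschke machinery handles; and your Vandermonde variant, though a nice observation and correct as far as it goes, only treats the case where $\sigma_{k_0}$ leaves the spectrum of $K_{u^\infty}^2$ entirely, not a drop of multiplicity from $m$ to some $m'\geq 1$.
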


\begin{corollary}\label{cornecc2}
If $\alpha<0$, for any $N\in\mathbb{N}^+$, given initial data $u_0\in\mathcal{L}(N)$, then the solution to the $\alpha$--Szeg\H{o} equation stays in a compact subset of $\mathcal{L}(N)$.
\end{corollary}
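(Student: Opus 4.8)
The plan is to derive the corollary from the necessary condition of Theorem~\ref{necc2} by a contradiction argument, exploiting that the sign of $\alpha$ forces the conserved quantities $\ell_k$ to be strictly positive. First I would appeal to the Lemma preceding Theorem~\ref{necc2}, which reduces the compactness of the trajectory to a statement about rank: the solution $u(t)$ fails to remain in a compact subset of $\mathcal{L}(N)$ precisely when there is an adherent value $u^\infty$ of $u(t)$ at infinity with $\mathrm{rk}\,K_{u^\infty}<\mathrm{rk}\,K_{u_0}$. So I would suppose, aiming for a contradiction, that such a $u^\infty$ exists.

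Under this assumption, Theorem~\ref{necc2} yields an index $k$ for which $\ell_k(u_0)=0$. Recalling that $\ell_k=\|u'_k\|^2-\alpha\|v'_k\|^2$ is conserved and that $\alpha<0$, I would rewrite this as $\ell_k=\|u'_k\|^2+|\alpha|\,\|v'_k\|^2\ge 0$, with equality forcing in particular $u'_k=0$. The whole argument then hinges on ruling out $u'_k(u_0)=0$.

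This last point, which is the crux, is immediate from the definitions: $u'_k$ is the orthogonal projection of $u_0$ onto $F_{u_0}(\sigma_k)$, and membership $\sigma_k\in\Sigma_K(u_0)$ means exactly that $u_0\not\perp F_{u_0}(\sigma_k)$, so $\|u'_k(u_0)\|^2>0$. Hence $\ell_k(u_0)\ge\|u'_k(u_0)\|^2>0$, contradicting $\ell_k(u_0)=0$. I would conclude that no adherent value with strictly smaller rank can occur, so $\mathrm{rk}\,K_{u^\infty}=\mathrm{rk}\,K_{u_0}$ for every adherent value, and by the Lemma the trajectory stays in a compact subset of $\mathcal{L}(N)$; the uniform Sobolev bound then follows from the explicit rational description of $\mathcal{L}(N)\subset H^s$ in Theorem~\ref{kroneckerc2}.

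Since everything reduces to the already-established Theorem~\ref{necc2} together with the elementary positivity $\|u'_k\|^2>0$, I do not expect a genuine obstacle here; the only subtlety worth stating carefully is that this positivity is built into the very definition of the dominant spectrum $\Sigma_K(u_0)$, which guarantees $\ell_k(u_0)>0$ for every $k$ when $\alpha<0$.
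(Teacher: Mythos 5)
Your overall strategy coincides with the paper's: reduce compactness to rank preservation via the Lemma on adherent values, then run the contrapositive of Theorem~\ref{necc2}, killing the alternative $\ell_k(u_0)=0$ by the sign of $\alpha$. The gap sits exactly at the step you call the crux. The conserved quantities $\ell_k$ are attached to \emph{all} eigenvalues $\sigma_k^2$ of $K_{u_0}^2$ (this is how they are defined in the introduction and in Section 4, and it is the indexing required both for the identity $L_n=\sum_k\sigma_k^{2n}\ell_k$ and for Theorem~\ref{necc2}), not only to the $K$-dominant ones in $\Sigma_K(u_0)$. An eigenvalue of $K_{u_0}^2$ can fail to be $K$-dominant: by Proposition~\ref{rigidityc2} this happens precisely when $\sigma_k\in\Sigma_H(u_0)$ with multiplicity at least $2$ and $F_{u_0}(\sigma_k)=E_{u_0}(\sigma_k)\cap u_0^{\perp}$, i.e.\ when the initial datum itself exhibits a crossing. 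This is not an exotic scenario: the paper's own example in Section 4, $u_0(z)=\frac{z-p}{1-pz}$, has a crossing at $t=0$ at $\sigma=1$. For such an index $k$ your premise ``$\sigma_k\in\Sigma_K(u_0)$'' is false, one has $u'_k(u_0)=0$, and your contradiction evaporates; in particular your closing claim that positivity of $\ell_k(u_0)$ for \emph{every} $k$ ``is built into the very definition of the dominant spectrum'' is incorrect.

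What is actually needed, and what the paper's one-line proof implicitly uses, is that $u'_k(u_0)$ and $v'_k(u_0)$ cannot vanish \emph{simultaneously}: equality in $\ell_k=\|u'_k\|^2+|\alpha|\,\|v'_k\|^2\ge 0$ forces both to vanish, and at a crossing eigenvalue one still has $v'_k(u_0)\neq 0$. This is exactly the computation in the paper's Lemma of Section 4, where at a crossing $\|v'_k\|=\|u_k\|/\sigma_k\neq 0$ because $u_0\not\perp E_{u_0}(\sigma_k)$ there; so $\ell_k(u_0)=|\alpha|\,\|v'_k(u_0)\|^2>0$ even in that case. Alternatively, you could avoid the case analysis by conservation: pick a time $t^*$ outside the discrete set of crossing times (such times have full measure by the results of Section 4), so that every eigenvalue of $K_{u(t^*)}^2$ is $K$-dominant, hence $u'_k(t^*)\neq 0$ and $\ell_k(u_0)=\ell_k(u(t^*))>0$ for all $k$. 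With either supplement your argument closes and agrees with the paper's proof; without it, the step that carries the whole proof is unjustified.
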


\begin{proof}[Proof of Corollary~\ref{cornecc2}]
Since $\alpha<0$, then $\ell_k:=\|u'_k\|^2-\alpha\|v'_k\|^2>0$, due to Theorem~\ref{necc2}, $\mathrm{rk} K_{u^\infty}\equiv \mathrm{rk} K_{u_0}$.
\end{proof}

\begin{proof}[Proof of Theorem~\ref{necc2}]
Assume $\mathrm{rk} K_{u^\infty}< \mathrm{rk} K_{u_0}$, then there exists some $k$ such that $\dim F_{u^\infty}(\sigma_k)<\dim F_{u_0}(\sigma_k)=m$. We are to prove $\|u'^\infty_k\|^2=0$ and $\|v'^\infty_k\|^2=0$.
\begin{itemize}
  \item $\|u'^\infty_k\|^2=0$.
  
There exists a time dependent Blaschke product $\Psi_k$ of degree $m-1$ such that
\begin{equation}
\label{psic2}K_{u(t_n)}^2(u'_k (t_n))=\sigma_k ^2u'_k (t_n)\ ,\ K_{u(t_n)}(u'_k (t_n))=\sigma_k \Psi_k (t_n)u'_k (t_n)\ ,
\end{equation}
By Proposition~\ref{evoblaschke}, any limit point of $\Psi_k (t)$ as $t$ goes to $\infty $ is of degree $m-1$ as well. Since $u'_k (t_n)$ is bounded in $L^2_+$, up to a subsequence it converges weakly to some $u'^\infty_k\in L^2_+$.
Passing to the limit in the identities \eqref{psic2}, we get
\begin{equation}
\label{psiinfc2}K_{u^\infty}^2(u'^\infty_k)=\sigma_k ^2u'^\infty_k\ ,\ K_{u^\infty }(u'^\infty_k)=\sigma_k \Psi_k^\infty u'^\infty_k\ ,
\end{equation}
where $\Psi_k ^\infty $ is a Blaschke product of degree $m-1$. The latter identities \eqref{psiinfc2} show that $u'^\infty_k$ and $\Psi_k^\infty u'^\infty_k$ belong to $F_{u^\infty}(\sigma_k)$, hence, if $u'^\infty_k$ is not zero, the dimension of $F_{u^\infty} (\sigma_k )$ is at least $m$. Indeed, if we write $\Psi_k ^\infty={\mathrm e}^{-i\psi}\frac{P(z)}{D(z)} $, then
\begin{equation}
F_{u^\infty}(\sigma_k)=\left\{\frac{f}{D(z)}u'^\infty_k,\ f\in\mathbb{C}_{m-1}[z]\ \right\}\ .
\end{equation}

  \item $\|v'^\infty_k\|^2=0$.

Recall the structure of $F_u(\sigma_k)$ with $\sigma_k\in \Sigma_K(u)$ in Proposition~\ref{actionc2}, the orthogonal projection of $1$ onto the space $F_u(\sigma_k)$, $v'_k$ can be represented as
\begin{equation*}
v'_k=\Big(1\ |\ \frac{u'_k}{\|u'_k\|}\Big)\frac{u'_k}{\|u'_k\|}\ .
\end{equation*}
If $v'^\infty_k\neq0$, since $\|v'_k\|=\big|(1\ |\ \frac{u'_k}{\|u'_k\|})\big|$ ,
thus $\frac{u'_k}{\|u'_k\|}\rightharpoonup v$ in $L^2$ with $v\neq0$. Using the strategy in the first step above by replacing $u'_k$ by $\frac{u'_k}{\|u'_k\|}$, we have $\dim F_{u^\infty}(\sigma_k)=m$.
\end{itemize}
\end{proof}

\section{Large time behavior of the solution for the case $\alpha>0$}
In this section, we prove  for any $N$, there exist solutions in $\mathcal{L}(N)$ which admit an exponential on time norm explosion.
\begin{theorem}\label{suffnecL1}
For $\alpha>0$, $u_0\in H^s_+$ such that $\Sigma_K(u_0)=\{\sigma\}$ with multiplicity $k=\mathrm{rk} K_{u_0}$. Then $\Vert u(t)\Vert_{H^s}$ grows exponentially on time,
$$\|u(t)\|_{H^s}\simeq \mathrm{e}^{C_{\alpha}(2s-1)|t|}\ ,$$
 if and only if
\begin{equation}
L_1(u):=(K_u^2(u)|u)-\alpha(K_u^2(1)|1)=0\ .
\end{equation}
\end{theorem}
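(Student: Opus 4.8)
The plan is to convert the statement into one about the single conserved quantity $\ell$, dispatch the necessity by the compactness criterion together with Theorem~\ref{necc2}, and then do the real work on the sufficiency by reducing to a finite–dimensional flow and extracting an exact exponential rate. First I would record the algebraic reduction. Because the spectrum and multiplicities of $K_u$ are conserved along the flow (Theorem~\ref{szlaxc2}) and $u(t)=u'_\sigma(t)$ keeps $u\not\perp F_u(\sigma)$, the hypothesis $\Sigma_K(u_0)=\{\sigma\}$ persists for all $t$. Writing the conservation law of Theorem~\ref{conservec2} as $L_1=\sum_k\sigma_k^2\,\ell_k$ then collapses to $L_1=\sigma^2\ell$ with $\ell:=\|u'_\sigma\|^2-\alpha\|v'_\sigma\|^2$, so for $\sigma>0$ one has $L_1=0\iff\ell=0$. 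I would also record that in this configuration the $H^{1/2}$ norm is controlled (via $\|u\|_{H^{1/2}}^2=\mathrm{Tr}(H_u^2)$ and the boundedness of the relevant eigenvalues), which is what makes the endpoint $s=\tfrac12$ consistent with rate $0$.

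For the necessity direction, suppose $\|u(t)\|_{H^s}\simeq \mathrm{e}^{C_\alpha(2s-1)|t|}$ for some $s>\tfrac12$, so that $\|u(t)\|_{H^s}\to\infty$ and $u(t)$ leaves every compact subset of $\mathcal{L}(N)$. By the compactness criterion (the Lemma preceding Theorem~\ref{necc2}), every adherent value $u^\infty$ satisfies $\mathrm{rk}\,K_{u^\infty}<\mathrm{rk}\,K_{u_0}$. Theorem~\ref{necc2} then produces an index with $\ell_k(u_0)=0$, which in the present single-eigenvalue situation is exactly $\ell=0$, i.e. $L_1=0$. This direction is essentially free once the reduction above is in place.

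The sufficiency direction is where the substance lies. Assuming $\ell=0$, I would first use the rigidity of the Blaschke product: by Propositions~\ref{evoblaschc2} and~\ref{evoblaschke} the inner function $\Psi_\sigma(t)$ only rotates and keeps its zeros fixed, so $u(t)$ is parametrized by finitely many scalar coordinates and the pair $(u'_\sigma,v'_\sigma)$ obeys the closed system~\eqref{pksystem}. Combining $\ell=0$ with the argument in the proof of Theorem~\ref{necc2}, I would show that along some sequence $t_n\to\infty$ the configuration must degenerate, $\dim F_{u^\infty}(\sigma)=k-1$, so a single spectral parameter $\epsilon(t)$ (the closing gap, equivalently the distance to $\mathbb{S}^1$ of the one pole not pinned by $\Psi_\sigma$) tends to $0$. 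The crux is to prove this approach is genuinely exponential with a definite rate: I would linearize~\eqref{pksystem} at the rank-$(k-1)$ equilibrium, use the conserved quantities $Q$, $M$, $E_\alpha$, $\sigma$ and the constraint $\ell=0$ to restrict to a one- or two-dimensional invariant set, and show the equilibrium is hyperbolic with Lyapunov exponent $2C_\alpha$, so that $\epsilon(t)\simeq \mathrm{e}^{-2C_\alpha|t|}$; on the low-dimensional invariant set the remaining equation should integrate explicitly, paralleling the elliptic-function computation of the crossing Example and the $\mathcal{L}(1)$ analysis.

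Finally I would convert the rate into the norm estimate. Since $\|u\|_{H^{1/2}}$ stays bounded and the degeneration $\epsilon(t)\to 0$ is $H^{1/2}$-critical, the homogeneous scaling of $\mathcal{L}(N)$ functions (Theorem~\ref{kroneckerc2}) gives $\|u(t)\|_{H^s}\simeq \epsilon(t)^{-(s-1/2)}\simeq \mathrm{e}^{C_\alpha(2s-1)|t|}$ for every $s\ge\tfrac12$, with the $s=\tfrac12$ endpoint recovering boundedness. I expect the main obstacle to be precisely the hyperbolicity-and-rate step: ruling out a merely sub-exponential (e.g. polynomial) approach to the degenerate state and identifying the Lyapunov exponent with $C_\alpha$ uniformly in the multiplicity $k$. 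This will likely require a normal-form analysis of~\eqref{pksystem} near the equilibrium, using $\alpha>0$ decisively (the sign that made the analogous determinant negative in the crossing analysis) to guarantee a real unstable direction.
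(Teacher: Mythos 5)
Your preliminary reduction and the necessity direction are sound: in this spectral configuration $L_1=\sigma^2\ell$ with $\ell=\|u'_\sigma\|^2-\alpha\|v'_\sigma\|^2$, so $L_1=0\iff\ell=0$; and if $\|u(t)\|_{H^s}$ blows up, the trajectory leaves every compact subset of $\mathcal{L}(N)$, the compactness lemma of Section 5 produces an adherent value with $\mathrm{rk}\,K_{u^\infty}<\mathrm{rk}\,K_{u_0}$, and Theorem~\ref{necc2} forces $\ell=0$. That half is essentially free, as you say.

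The sufficiency direction, however, has a genuine gap, in two places. First, you claim that ``combining $\ell=0$ with the argument in the proof of Theorem~\ref{necc2}'' yields degeneration along some sequence $t_n\to\infty$. Theorem~\ref{necc2} is a one-way implication (rank drop at infinity $\Rightarrow$ $\ell_k=0$); its proof cannot be reversed, and $\ell=0$ by itself gives no degeneration whatsoever --- producing it is exactly the content to be proved. Second, the proposed linearization of \eqref{pksystem} at ``the rank-$(k-1)$ equilibrium'' cannot be carried out as stated: the degenerate states are not equilibria of the flow inside the phase space but boundary points of $\mathcal{L}(N)$ (a pole reaching $\mathbb{S}^1$), and \eqref{pksystem} is not a closed autonomous system (its coefficients $T_{|u|^2}$ and $(u|1)$ depend on the full solution), so hyperbolicity and a Lyapunov exponent $2C_\alpha$ are conjectured, not derived. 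The paper's actual mechanism bypasses all of this and is the idea your proposal is missing: by the explicit formula and the rigidity of the Blaschke product, the solution has the exact form \eqref{starstar}, $u(t,z)=b(t)+\frac{c(t)z\chi(z)}{1-p(t)z\chi(z)}$ with $\chi$ time-independent, i.e.\ $u(t,z)=w(t,z\chi(z))$ with $w(t,y)=b(t)+\frac{c(t)y}{1-p(t)y}$; a substitution lemma shows $u$ solves the $\alpha$--Szeg\H{o} equation if and only if $w$ does, so $w$ is an $\mathcal{L}(1)$ solution; $L_1$ is invariant under this inner substitution (it is built from $\|\cdot\|_{L^4}$, $\|\cdot\|_{L^2}$, $|(\cdot|1)|$, all preserved by $y\mapsto z\chi(z)$); hence the $\mathcal{L}(1)$ iff-theorem of \cite{XU2014APDE} applies verbatim and gives $1-|p(t)|\simeq \mathrm{e}^{-2C_\alpha|t|}$ together with $|c|\simeq 1-|p|$, uniformly in the multiplicity $k$. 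Finally, your last step --- converting the rate into $\|u\|_{H^s}\simeq \epsilon^{-(s-1/2)}$ by ``homogeneous scaling'' and Theorem~\ref{kroneckerc2} --- is also not a proof: the Kronecker theorem gives no norm asymptotics, and the transfer to $H^s$ requires the lower bound of Lemma~\ref{estBlaschke}, $\bigl\Vert \frac{1}{1-p\Psi}\bigr\Vert_{H^s}\gtrsim (1-|p|)^{-s-1/2}$. So the heart of the theorem --- why $L_1=0$ forces genuinely exponential growth at the rate $C_\alpha(2s-1)$ --- remains unestablished in your proposal.
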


Let $u_0$ as in the theorem above. If $u_0$ is not a Blaschke product, we have
\begin{equation*}
\Sigma _H(u_0)=\{ \rho _1,\rho _2\} \ ,\ \rho _1>\sigma >\rho _2\ .
\end{equation*}
Using the results by G\'erard and Grellier \cite{GGHankel}, we have the explicit formula for the solution $u$ as
\begin{equation}
u(t,z)=\frac{\triangle_{11}-\triangle_{21}}{\det(\mathcal{C}(z))}{\mathrm e}^{-i\varphi_1}+\frac{\triangle_{22}-\triangle_{12}}{\det(\mathcal{C}(z))}{\mathrm e}^{-i\varphi_2}\ ,
\end{equation}
with $\triangle_{jk}$ as the minor determinant of $\mathcal{C}(z)$ corresponding to line $k$ and column $j$, and
\begin{equation*}
\mathcal{C}(z)=\begin{pmatrix}
\frac{\rho_1-\sigma z \Psi {\mathrm e}^{-i\varphi_1}}{\rho_1^2-\sigma^2} & \frac{\rho_2-\sigma z \Psi {\mathrm e}^{-i\varphi_2}}{\rho_2^2-\sigma^2}  \\
\frac{1}{\rho_1} & \frac{1}{\rho_2}
\end{pmatrix} 
\end{equation*}
Then
\begin{equation*}
u(t,z)=\frac{(\frac{1}{\rho_2}-\frac{\rho_2-\sigma z\Psi {\mathrm e}^{-i\varphi_2}}{\rho_2^2-\sigma^2}){\mathrm e}^{-i\varphi_1}+(\frac{\rho_1-\sigma z\Psi {\mathrm e}^{-i\varphi_1}}{\rho_1^2-\sigma^2}-\frac{1}{\rho_1}){\mathrm e}^{-i\varphi_2}}{\frac{1}{\rho_2}(\frac{\rho_1-\sigma z \Psi {\mathrm e}^{-i\varphi_1}}{\rho_1^2-\sigma ^2})-\frac{1}{\rho_1}(\frac{\rho_2-\sigma z \Psi {\mathrm e}^{-i\varphi_2}}{\rho_2^2-\sigma^2})}\ .
\end{equation*}

An interesting fact is that $u$ is under the form
\begin{equation*}
u(t,z)=b(t)+\frac{c'(t) z\Psi(t,z)}{1-p'(t)z\Psi(t,z)}\ ,
\end{equation*}
where $b\ ,p',\ c'\in\mathbb{C}$.
Since $\Psi(t,z)={\mathrm e}^{i\psi(t)}\chi(z)$ with $\chi$ as a time independent Blaschke product, we then rewrite
\begin{equation}\label{starstar}
u(t,z)=b(t)+\frac{c(t) z\chi(z)}{1-p(t)z\chi(z)}\ .
\end{equation}

\begin{lemma}
Let $\chi$ be a time-independent Blaschke product. A function $u\in C^\infty(\R,H^s_+)$ with $s>\frac12$ is a solution of the $\alpha$--Szeg\H{o} equation,
\begin{align*}
i\partial_t u=\Pi(|u|^2u)+\alpha(u|1)\ ,
\end{align*}
if and only if 
\begin{equation*}
\widetilde{u}(t,z):=u(t,z\chi(z))
\end{equation*}
satisfies the $\alpha$--Szeg\H{o} equation.
\end{lemma}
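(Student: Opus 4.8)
The plan is to recognize the substitution as composition with the inner function $\Phi(z):=z\chi(z)$ and to show that the entire nonlinear flow intertwines with this composition. Set $C_\Phi f:=f\circ\Phi$, so that $\widetilde u(t)=C_\Phi u(t)$. Since $\chi$ is a finite Blaschke product, $\Phi$ is again a finite Blaschke product and, crucially, $\Phi(0)=0$; thus $\Phi$ is inner and fixes the origin. I would first record the mapping properties of $C_\Phi$: because the pushforward of normalized Lebesgue measure on $\mathbb S^1$ under an inner function with $\Phi(0)=0$ is again normalized Lebesgue measure (harmonic measure at $\Phi(0)=0$), $C_\Phi$ is an isometry of $L^2(\mathbb S^1)$; it preserves $L^2_+$ (a holomorphic function composed with a holomorphic self-map of $\mathbb D$ is holomorphic), and since $\Phi$ is a smooth unbranched covering of the circle it maps $H^s_+$ into $H^s_+$ for $s>\tfrac12$. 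In particular $\widetilde u\in C^\infty(\R,H^s_+)$ whenever $u$ is, and $C_\Phi$ is injective, so the statement will follow as an equivalence once a single intertwining identity is established.

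The heart of the argument is the commutation $\Pi C_\Phi=C_\Phi\Pi$ on $L^2(\mathbb S^1)$. I would prove this on the orthonormal basis $\{{\mathrm e}^{in\theta}\}_{n\in\Z}$ and extend by boundedness and density. For $n\ge0$ one has $C_\Phi({\mathrm e}^{in\theta})=\Phi^n\in L^2_+$, so $\Pi C_\Phi({\mathrm e}^{in\theta})=\Phi^n=C_\Phi\Pi({\mathrm e}^{in\theta})$. For $n<0$, on the boundary $C_\Phi({\mathrm e}^{in\theta})=\overline{\Phi}^{\,|n|}$; since $\Phi$ vanishes at the origin, $\Phi^{|n|}$ carries only frequencies $\ge|n|\ge1$, hence $\overline{\Phi}^{\,|n|}$ carries only frequencies $\le-1$ and $\Pi C_\Phi({\mathrm e}^{in\theta})=0=C_\Phi\Pi({\mathrm e}^{in\theta})$. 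This is exactly the step where $\Phi(0)=0$ is indispensable, and it is the main obstacle: without vanishing at the origin the negative powers of $\Phi$ would acquire non-negative frequencies and $\Pi$ would fail to commute with $C_\Phi$.

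With these two facts in hand the conclusion is a short computation. On $\mathbb S^1$ the map $\Phi$ takes values in $\mathbb S^1$, so pointwise $|\widetilde u|^2\widetilde u=C_\Phi(|u|^2u)$; moreover $C_\Phi$ fixes constants and preserves the mean ($\widehat{\widetilde u}(0)=\hat u(0)$ because $\Phi(0)=0$), whence $(\widetilde u|1)=(u|1)$; and $\partial_t\widetilde u=C_\Phi\partial_t u$ since $\Phi$ is time-independent. Writing the defect $\mathcal F(u):=i\partial_t u-\Pi(|u|^2u)-\alpha(u|1)\in L^2_+$, I would combine these identities with the commutation above to obtain
\[
\mathcal F(\widetilde u)=C_\Phi\bigl(i\partial_t u\bigr)-C_\Phi\Pi(|u|^2u)-\alpha(u|1)\,C_\Phi 1=C_\Phi\mathcal F(u)\ .
\]
Since $C_\Phi$ is injective, $\mathcal F(\widetilde u)=0$ if and only if $\mathcal F(u)=0$, which is precisely the asserted equivalence. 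The only points requiring care beyond the commutation are the routine verifications that $\widetilde u$ lies in the correct Sobolev space and that the boundary composition in $|\widetilde u|^2\widetilde u=C_\Phi(|u|^2u)$ is justified, both of which follow from the smoothness of the finite Blaschke product $\Phi$ and the embedding $H^s_+\hookrightarrow C(\mathbb S^1)$ for $s>\tfrac12$.
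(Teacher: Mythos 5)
Your proof is correct, and its core mechanism is the same as the paper's: everything hinges on the fact that $\Phi(z)=z\chi(z)$ is inner \emph{and vanishes at the origin}, so that $(z\chi(z))^m$ carries only frequencies $\ge m$, which is exactly why the Szeg\H{o} projector respects the substitution. Your basis computation of $\Pi C_\Phi=C_\Phi\Pi$ on $\{{\mathrm e}^{in\theta}\}$ is precisely the paper's Fourier-side identity $\Pi\bigl(|u(z\chi)|^2u(z\chi)\bigr)=\sum_{p-q+r\ge0}\hat u(p)\overline{\hat u}(q)\hat u(r)(z\chi)^{p-q+r}$, just stated operator-theoretically. Where you genuinely diverge is in how the equivalence is closed: the paper proves the converse by identifying Fourier modes inductively (match the zero mode, subtract, divide by $z\chi(z)$, repeat), whereas you derive the single intertwining identity $\mathcal F(\widetilde u)=C_\Phi\mathcal F(u)$ and invoke injectivity of $C_\Phi$, which you obtain from the Nordgren-type fact that an inner symbol fixing the origin yields an isometric composition operator. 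Your route buys a cleaner, one-stroke treatment of both implications and makes the converse rigorous without the somewhat informal ``withdraw and simplify'' induction; its cost is the extra measure-theoretic input (invariance of Lebesgue measure under $\Phi$), which the paper's bare-hands Fourier argument avoids entirely. Both uses of regularity (the $H^s\hookrightarrow C(\mathbb S^1)$ embedding, smoothness of the finite Blaschke product on the circle) are handled adequately in your write-up.
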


\begin{proof}
 First of all, $z\chi(z)\in C^\infty_+(\mathbb{S}^1)$, then $(z\chi(z))^n\in C^\infty_+(\mathbb{S}^1)$ for any $n$, so that $u\in H^s_+$ implies $\widetilde{u}\in H^s_+$.
 Assume $u$ is a solution of the $\alpha$--Szeg\H{o} equation, it is equivalent to 
 \begin{equation}\label{Fourierszego}
 i\partial_t \hat{u}(t,n)=\sum\limits_{p-q+r=n}\hat{u}(t,p)\overline{\hat{u}}(t,q)\hat{u}(t,r)+\alpha\hat{u}(t,0)\delta_{n0}\ ,\ \forall n\geq0\ .
 \end{equation}
 Since
\begin{equation*}
\Pi(|u(z\chi(z))|^2u(z\chi(z)))=\sum\limits_{p-q+r\ge0}\hat{u}(p)\overline{\hat{u}}(q)\hat{u}(r)(z\chi(z))^{p-q+r}\ ,
\end{equation*}
we obtain that $\widetilde{u}$ satisfies the $\alpha$--Szeg\H{o} equation.

Conversely, assume $\widetilde{u}$ satisfies the $\alpha$--Szeg\H{o} equation, then we have
 \begin{equation}\label{star}
 i\partial_t \hat{u}(n)(z\chi(z))^n=\sum\limits_{p-q+r\geq0}\hat{u}(p)\overline{\hat{u}}(q)\hat{u}(r)(z\chi(z))^{p-q+r}+\hat{u}(0)\ .
 \end{equation}
Identifying the Fourier coefficients of $0$ mode of both sides, we get equation \eqref{Fourierszego} with $n=0$. Then withdraw this quantity from both sides of \eqref{star} and simplify by $z\chi(z)$. Continuing this process, we get all the equations \eqref{Fourierszego} for every $n$.
\end{proof}

\begin{lemma}\label{estBlaschke}
Let $\Psi $ be a Blaschke product of finite degree $d$ and $s\in [0,1)$.  There exists $C_{\Psi ,s}>0$ such that,  for every $p\in \mathbb{D} $,
$$\left \Vert \frac{1}{1-p\Psi }\right \Vert _{H^s(\mathbb{S}^1)}\ge \frac{C_{\Psi ,s}}{(1-\vert p\vert )^{s+\frac 12}}\ .$$
\end{lemma}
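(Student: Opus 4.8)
The plan is to bound $\|\frac{1}{1-p\Psi}\|_{H^s}$ from below by isolating the behaviour of the function near $\mathbb{S}^1$, where it develops sharp peaks of height $\sim(1-|p|)^{-1}$ and width $\sim(1-|p|)$. Write $f_p:=\frac{1}{1-p\Psi}$; since $\Psi(\mathbb{D})\subset\mathbb{D}$ this is holomorphic and bounded on $\mathbb{D}$. For $s=0$ the estimate is elementary: on $\mathbb{S}^1$ one has $\Psi(e^{i\theta})=e^{i\phi(\theta)}$ with $\phi'(\theta)=\sum_j\frac{1-|a_j|^2}{|e^{i\theta}-a_j|^2}$ bounded between two positive constants, so the substitution $\phi=\phi(\theta)$ turns $\|f_p\|_{L^2}^2$ into $d$ copies of $\frac1{2\pi}\int_0^{2\pi}\frac{d\phi}{|1-pe^{i\phi}|^2}=\frac1{1-|p|^2}$, giving $\|f_p\|_{L^2}\gtrsim(1-|p|)^{-1/2}$. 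The substantive case is $s\in(0,1)$, where I would use the Littlewood--Paley/area characterisation of the holomorphic Sobolev norm,
\[\|f\|_{H^s}^2\simeq |f(0)|^2+\int_{\mathbb{D}}|f'(z)|^2(1-|z|)^{1-2s}\,dA(z),\]
which follows from orthogonality of $\{z^n\}$ together with the Beta integral $\int_0^1 r^{2n-1}(1-r)^{1-2s}\,dr\simeq n^{-(2-2s)}$. Applied to $f_p'=\frac{p\Psi'}{(1-p\Psi)^2}$, the task becomes a lower bound for $J:=\int_{\mathbb{D}}\frac{|p|^2|\Psi'(z)|^2}{|1-p\Psi(z)|^4}(1-|z|)^{1-2s}\,dA(z)$.

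The key step is a change of variables $w=\Psi(z)$. Because the critical points of the finite Blaschke product $\Psi$ lie strictly inside $\mathbb{D}$ and $\Psi$ extends holomorphically across $\mathbb{S}^1$, there is a boundary annulus $A_{\delta_0}=\{1-\delta_0<|z|<1\}$ on which $\Psi$ has no critical points, $|\Psi'(z)|\asymp 1$, and (by the Julia--Carath\'eodory angular derivative, uniform by compactness of $\mathbb{S}^1$) $1-|\Psi(z)|\asymp 1-|z|$. Restricting $J$ to $A_{\delta_0}$ and replacing $(1-|z|)^{1-2s}$ by $(1-|\Psi(z)|)^{1-2s}$ up to constants, I would then use that $\Psi\colon \Psi^{-1}(A')\to A'$ is a $d$-sheeted covering over the annulus $A'=\{1-\delta_1<|w|<1\}$ (for a suitable $\delta_1\asymp\delta_0$, with $\Psi^{-1}(A')\subset A_{\delta_0}$) and that $|\Psi'|^2$ is its Jacobian, to obtain
\[J\gtrsim |p|^2\int_{A'}\frac{(1-|w|)^{1-2s}}{|1-pw|^4}\,dA(w).\]

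Finally I would evaluate this model integral. Near the peak $\arg(pw)=0$, $|w|\to1$, writing $1-|w|=t$ and $\gamma=\arg w+\arg p$ gives $|1-pw|^2\asymp((1-|p|)+t)^2+\gamma^2$, so after the inner integration $\int\frac{d\gamma}{(((1-|p|)+t)^2+\gamma^2)^2}\asymp ((1-|p|)+t)^{-3}$ and the scaling $t=(1-|p|)\tau$, the integral is $\gtrsim (1-|p|)^{-(2s+1)}\int_0^{\infty}\frac{\tau^{1-2s}}{(1+\tau)^3}\,d\tau$, whose last factor converges precisely because $0<s<1$. This yields $J\gtrsim|p|^2(1-|p|)^{-(2s+1)}$ and hence the claim for $|p|$ bounded away from $0$; for small $|p|$ the bound is trivial since $\|f_p\|_{H^s}\ge|f_p(0)|\ge(1+|p|)^{-1}$ while the right-hand side stays bounded. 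The main obstacle is the change of variables: one must verify carefully that near $\mathbb{S}^1$ the Blaschke product behaves like a $d$-to-$1$ local diffeomorphism with $1-|\Psi(z)|\asymp 1-|z|$, so that the $z$-dependent weight $(1-|z|)^{1-2s}$ can legitimately be transported to the $w$-variable; everything after that is a routine, if scale-sensitive, computation.
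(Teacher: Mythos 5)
Your proposal is correct, but it takes a genuinely different route from the paper at the key step. Both arguments start from the same weighted area characterization $\Vert f\Vert_{H^s}^2\simeq |f(0)|^2+\int_{\mathbb{D}}|f'(z)|^2(1-|z|)^{1-2s}\,dL(z)$ and both exploit the peak region where $p\Psi(z)\approx 1$, but the executions diverge. The paper's proof is purely local and avoids any change of variables: it picks one solution $\alpha\in\mathbb{S}^1$ of $\frac{p}{|p|}\Psi(\alpha)=1$ (which exists and is simple because $\Psi'\neq 0$ on the circle), notes that $|1-p\Psi(z)|\le C(1-|p|)$ on the half-disc $\{|z-\alpha|\le 1-|p|\}$ by Taylor expansion, uses $|\Psi'|\gtrsim 1$ there, and integrates the weight over that half-disc to get $(1-|p|)^{-4}\cdot(1-|p|)^{3-2s}=(1-|p|)^{-(2s+1)}$ in three lines. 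You instead transport the whole integral to the $w$-plane via the $d$-sheeted covering $w=\Psi(z)$ over a boundary annulus, which requires justifying that $\Psi$ has no critical points near $\mathbb{S}^1$, that $|\Psi'|\asymp 1$ there, and that $1-|\Psi(z)|\asymp 1-|z|$ (this last fact has an elementary direct proof: $1-|\Psi(z)|^2\asymp\sum_j(1-|b_j(z)|^2)=(1-|z|^2)\sum_j\frac{1-|a_j|^2}{|1-\overline{a}_jz|^2}$, with the sum bounded above and below on $\mathbb{D}$, so you do not really need Julia--Carath\'eodory). What your approach buys is a reduction to the sharp model integral for $\Psi(z)=z$, computed exactly by scaling; since the change of variables is two-sided and the region away from the boundary annulus contributes $O(1)$, your method yields the matching upper bound as well, i.e.\ $\Vert\frac{1}{1-p\Psi}\Vert_{H^s}\asymp(1-|p|)^{-(s+\frac12)}$, which is consistent with (and would even strengthen) the $\simeq$ asymptotics invoked in the proof of Theorem~\ref{suffnecL1}; the paper's argument only gives the lower bound but is considerably shorter. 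One small slip in your write-up: the scaling computation at the peak genuinely requires $1-|p|$ small (so the peak fits inside the annulus $A'$), so the clean dichotomy is ``$1-|p|<\delta_1$'' versus ``$1-|p|\ge\delta_1$'', the latter being trivial by exactly the $|f_p(0)|\ge\frac12$ argument you give for small $|p|$ — not ``$|p|$ bounded away from $0$''. This is a matter of phrasing, not a gap.
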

\begin{proof}
It is a classical fact that, for every $u\in H^s_+(\mathbb{S} ^1)$, for every $s\in [0,1)$,
$$\Vert u\Vert _{H^s(\mathbb{S} ^1)}^2 \simeq \int _{\mathbb{D} }\vert u'(z)\vert ^2(1-\vert z\vert ^2)^{1-2s}\, dL(z)\ ,$$
where $L$ denotes the bi-dimensional Lebesgue measure.

Let $p\in \mathbb{D} $ close to the unit circle and 
$$\omega :=\frac{p}{\vert p\vert }\ .$$
Since $\Psi $ is a Blaschke product of finite degree $d$, the equation 
$$\omega \Psi (z)=1$$
admits $d$ solutions on the circle. Moreover, these solutions are simple. Indeed, writing
$$\Psi (z)={\rm e}^{-i\psi} \prod _{j=1}^d \frac{z-p_j}{1-\overline p_jz}\ ,\ \vert p_j\vert <1\ ,$$
we have, for every $z\in \mathbb{S} ^1$,
$$\frac{\Psi '(z)}{\Psi (z)}=\frac 1z \sum _{j=1}\frac{1-\vert p_j\vert ^2}{\vert z-p_j\vert ^2}\ne 0\ .$$
Let $\alpha $ be such a solution. 
 For every $z$ such that $$\vert z-\alpha \vert \le  (1-\vert p\vert ),$$ we have, if $1-\vert p\vert $ is small enough,
$$\vert 1-p\Psi (z)\vert =\vert 1-p\Psi (\alpha )- p\Psi '(\alpha )(z-\alpha )+O(\vert z-\alpha \vert ^2)\vert \le C( 1-\vert p\vert ).$$
Therefore
\begin{align*}
\left \Vert \frac{1}{1-p\Psi }\right \Vert _{H^s(\mathbb{S}^1)}^2&\ge A_s\int _{\mathbb{D} \cap \{ \vert z-\alpha \vert \le (1-\vert p\vert )\} }\left \vert \frac {\Psi '(z)}{(1-p\Psi (z))^2}\right \vert ^2(1-\vert z\vert ^2)^{1-2s}\, dL(z)\\
 &\ge B_{\Psi ,s}(1-\vert p\vert )^{-4}\int _{\mathbb{D} \cap \{ \vert z-\alpha \vert \le (1-\vert p\vert )\} }(1-\vert z\vert ^2)^{1-2s}\, dL(z)\\
 &\ge \frac{C_{\Psi ,s}^2 }{(1-\vert p\vert )^{2s+1}}\ .
\end{align*}
\end{proof}

Let us turn back to prove the theorem.\\
\begin{proof}[Proof of Theorem~\ref{suffnecL1}]
Recall that
\begin{align*}
L_1(u)&=\big(K_u^2(u)\ |\ u\big)-\alpha\big(K_u^2(1)\ |\ 1\big)\\
&=\frac12\big(\|u\|_{L^4}^4-\|u\|_{L^2}^4\big)-\alpha\big(\|u\|_{L^2}^2-|(u\ |\ 1)|^2\big)\ .
\end{align*}
Since $\chi(z)$ is an inner function, we have
$$(\widetilde{u}\ |\ \widetilde{v})=(u|v)\ ,\forall\ u,v\ ,$$
thus
\begin{align*}
(\widetilde{u}|1)=(u|1)\ , \Vert \widetilde{u}\Vert_{L^2}=\Vert u\Vert_{L^2}\ ,
\end{align*}
and since $$\widetilde{u^2}=(\widetilde{u})^2\ ,$$
then
\begin{align*}
\Vert \widetilde{u}\Vert_{L^4}=\Vert u\Vert_{L^4}\ .
\end{align*}
 As a consequence, $L_1(u)=L_1(\widetilde{u})=0$. 

 The solution $\widetilde{u}$ is under the form \eqref{starstar}, 
\begin{align*}
u(t,z)=b(t)+\frac{c(t) z\chi(z)}{1-p(t)z\chi(z)}=b-\frac{c}{p}+\frac{c}{p}\frac{1}{1-pz\chi(z)}\ ,
\end{align*} 
 thus
\begin{align*}
\Vert u\Vert_{H^s}&\simeq\vert c\vert\Vert\frac{1}{1-pz\chi(z)}\Vert_{H^s}\\
&\geq C_{\chi,s}\frac{|c|}{(1-|p|)^{s+1/2}}\ ,
\end{align*}
 where we used Lemma~\ref{estBlaschke}.
 Using the result in \cite[Theorem 3.1]{XU2014APDE} and its proof, we have 
 $$\frac{|c|}{(1-|p|)^{s+1/2}}\simeq (1-|p|)^{-s+1/2}\simeq\mathrm{e}^{C_{\alpha}(2s-1)|t|}\ .$$
Therefore, $\widetilde{u}$ admit an exponential on time growth of the Sobolev norm $H^s$ with $s>\frac12$. The proof is complete.
\end{proof}

\section{Perspectives}
The main purpose of this work is to study the dynamics of the general solutions of the $\alpha $--Szeg\H{o} equation \eqref{alszc2}. We have already observed the weak turbulence by considering some special rational data. We proved the existence of data with exponential in time growth, a natural question is about the genericity of data with such a high growth. Besides, an important open problem is to gain new informations on the solutions with infinite rank. 

Another interesting question is about the cubic Szeg\H{o} equation with other perturbations, for example, consider a Hamiltonian function 
$$E(u)=\frac14\Vert u\Vert_{L^4}^4+\frac 12 F(\vert (u\vert 1)\vert ^2)\ ,$$
with a non linear function $F$. In this case, we still have one Lax pair $(K_u, C_u)$ while the conservation laws we found no longer exist. The question is to study the integrability and also the existence of turbulent solutions of this new Hamiltonian system.


\end{document}